\renewcommand{\geq}{\geqslant}
\renewcommand{\leq}{\leqslant}
\newcommand*\tto[1][]{\overset{#1}{\longrightarrow}}
\newcommand*\Int{\mathbb{Z}}
\newcommand*\Nat{\mathbb{N}}
\newcommand*\Real{\mathbb{R}}
\newcommand*\Realnn{\Real_{ \ge 0}}
\newcommand{\etc}{\textit{etc}}
\newcommand{\R}{\mathbb{R}}
\newcommand{\mydef}{:=}
\renewcommand{\defi}{\mydef}
\newcommand{\smaxplus}{\mathbb{G}}
\newcommand{\sleq}{\mathrel{\leq_{\smaxplus}}}
\newcommand{\sgeq}{\mathrel{\geq_{\smaxplus}}}
\newcommand{\RRleq}{\mathrel{\leq_{\RR}}}
\newcommand{\RRgeq}{\mathrel{\geq_{\RR}}}
\newcommand{\RRmpzero}{\mathbbb{0}}
\newcommand{\RRmpone}{\mathbbb{1}}
\newcommand{\sgreater}{\mathrel{>_{\smaxplus}}}
\newcommand{\sless}{\mathrel{<_{\smaxplus}}}
\newcommand{\RR}{\mathbb{S}}
\newcommand{\mcR}{\mathcal{R}}
\newcommand{\mcS}{\mathcal{S}}
\newcommand{\eps}{\varepsilon}
\newcommand{\myul}[1]{#1^{-}}
\DeclareMathOperator{\tconv}{\text{\normalfont tconv}}
\DeclareMathOperator{\tcone}{\text{\normalfont tcone}}
\newcommand{\GG}{\mathcal{G}}
\newcommand{\mpglib}{\text{MPGLib}}
\renewcommand{\mpzero}{-\infty}
\renewcommand{\mpone}{0}
\newcommand{\binomial}[2]{\binom{#1}{#2}}
\newcommand{\pert}{\R^{-}}
\newcommand{\abs}[1]{\lvert#1\rvert}
\newtheorem{proposition}{Proposition}
\newtheorem{lemma}[proposition]{Lemma}
\newtheorem{theorem}[proposition]{Theorem}
\newtheorem{corollary}[proposition]{Corollary}
\newtheorem{assumption}{Assumption}
\theoremstyle{remark}
\newtheorem{remark}[proposition]{Remark}
\newtheorem{example}[proposition]{Example}
\title{Tropical Fourier-Motzkin Elimination, with an Application to
  Real-Time Verification}
\thanks{The five authors were partially supported by the programme ``Ingénierie Numérique \& Sécurité'' of the French National Agency of
  Research (ANR), project ``MALTHY'', number ANR-13-INSE-0003. Xavier Allamigeon was partially supported by the same programme, project ``CAFEIN'', number ANR-12-INSE-0007. Ricardo
  D.~Katz was partially supported by CONICET Grant PIP
  112-201101-01026. Xavier Allamigeon, Ricardo D.~Katz and Stéphane
  Gaubert were partially supported by the PGMO program of EDF and
  Fondation Math\'ematique Jacques Hadamard.  Uli Fahrenberg and Axel
  Legay were partially supported by the EU IP project DANSE and by the
  regional CREATE Estase initiative.}
\author{Xavier Allamigeon}
\email[X.~Allamigeon]{xavier.allamigeon@inria.fr}
\author{Uli Fahrenberg}
\email[U.~Fahrenberg]{ulrich.fahrenberg@inria.fr}
\author{Stéphane Gaubert}
\email[S.~Gaubert]{stephane.gaubert@inria.fr} 
\author{Ricardo D.~Katz}
\email[R.~D.~Katz]{katz@cifasis-conicet.gov.ar}
\author{Axel Legay}
\email[A.~Legay]{axel.legay@inria.fr}
\address[X.~Allamigeon and S.~Gaubert]{INRIA and CMAP, \'Ecole Polytechnique, CNRS, 91128 Palaiseau C\'edex, France}
\address[U.~Fahrenberg and A.~Legay]{Irisa / INRIA, Campus
  Beaulieu, 35042 Rennes C\'edex, France}
\address[R.~D.~Katz]{CONICET-CIFASIS, Bv. 27 de Febrero 210 bis, 2000 Rosario, Argentina}
\date{\today}
\keywords{Tropical polyhedra, strict inequalities, 
    Fourier-Motzkin elimination, mean payoff games, real-time
    verification, timed automata}
\subjclass[2010]{14T05, 52A01, 52B55}
\begin{document}

\begin{abstract}
We introduce a generalization of tropical polyhedra able to express both strict and non-strict inequalities. Such inequalities are handled by means of a semiring of germs (encoding infinitesimal perturbations). 
We develop a tropical analogue of Fourier-Motzkin elimination
from which we derive geometrical properties of these polyhedra.
In particular, we show that they coincide with the tropically convex union of (non-necessarily closed) cells
that are convex both classically and tropically. We also prove that the redundant inequalities produced when performing
successive elimination steps
can be dynamically deleted by reduction to mean payoff game problems. 
As a complement, we provide 
a coarser (polynomial time) deletion procedure  
which is enough
to arrive at a simply exponential bound for the total execution time.
These algorithms are illustrated
by an application to real-time systems
(reachability analysis of timed automata).
\end{abstract}

\maketitle

\section{Introduction}\label{sec:intro}

\subsection*{Tropical convexity} 
\emph{Tropical} or \emph{max-plus algebra} refers to the set $\maxplus
\defi \Real \cup \{-\infty\}$ equipped with $x \mpplus y \defi
\max(x,y)$ as addition and $x \mptimes y \defi x+y$ as multiplication
(the latter will be also denoted by concatenation $x y$).  In this setting,
an inequality constraint on variables $\vect x_1,\dots, \vect x_n$ is
said to be \emph{(tropically) affine} if it is of the form:
\begin{equation}\label{eq:affine_inequality}
a_0 \mpplus a_1 \vect{x}_1 \mpplus \cdots \mpplus a_n \vect{x}_n \leq b_0 \mpplus b_1 \vect{x}_1 \mpplus \cdots \mpplus b_n \vect{x}_n \enspace, 
\end{equation}
or equivalently, with usual notation,
\begin{equation}\label{eq:affine_inequality2}
\max(a_0, a_1 + \vect{x}_1, \dots, a_n + \vect{x}_n) \leq \max(b_0, b_1 + \vect{x}_1, \dots, b_n + \vect{x}_n) \enspace,
\end{equation}
where $a_i, b_i \in \maxplus$ for $i =0,1, \ldots , n$. By analogy with the terminology of usual convex geometry, a \emph{tropical (convex) polyhedron} is defined as a set composed of all the vectors $\vect{x} \in \maxplus^n$ satisfying finitely many such inequalities. An example is depicted on the left-hand side of Figure~\ref{fig:zone}.

\begin{figure}[t]
\begin{center}
\begin{tikzpicture}[>=stealth',initial text=,every state/.style={minimum size=0.4cm},convex/.style={draw=lightgray,fill=lightgray,fill opacity=0.7},convexborder/.style={very thick},scale=1]
\begin{scope}[scale=.6]
\draw[gray!40,very thin] (-3.5,-2.5) grid (4.5,4.5);
\draw[gray!80,->] (-3.5,0) -- (4.5,0) node[color=gray!80,above] {$\vect{x}_1$};
\draw[gray!80,->] (0,-2.5) -- (0,4.5) node[color=gray!80,right] {$\vect{x}_2$};
\filldraw[convex] (-1,0) -- (-1,-2) -- (1,-2) -- (2,-1) -- (2,1) -- (4.5, 3.5) -- (4.5,4.5) -- (2.5,4.5) -- (0,2) -- (-3,2) -- (-3,0) -- cycle;
\draw[convexborder] (-1,0) -- (-1,-2) -- (1,-2) -- (2,-1) -- (2,1) -- (4.5, 3.5)  (2.5,4.5) -- (0,2) -- (-3,2) -- (-3,0) -- (-1,0) ;  
\end{scope}
\begin{scope}[scale=.6,xshift=10cm,yshift=-1cm]
\coordinate (v0) at (0,0);
\coordinate (v1) at (0,2);
\coordinate (v2) at (2,4);
\coordinate (v3) at (6,4);
\coordinate (v4) at (6,3);
\coordinate (v5) at (3,0);

\draw[gray!40,very thin] (-1.5,-1.5) grid (7.5,5.5);
\draw[gray!80,->] (-1.5,-1) -- (7.5,-1) node[color=gray!80,above] {$\vect{x}_1$};
\draw[gray!80,->] (-1,-1.5) -- (-1,5.5) node[color=gray!80,right] {$\vect{x}_2$};

\fill[convex] (v0) -- (v1) -- (v2) -- (v3) -- (v4) -- (v5) -- cycle;
\draw[convexborder] (v0) -- (v1) (v2) -- (v3)  (v4) -- (v5);
\end{scope}
\end{tikzpicture}
\end{center}
\caption{Left: a tropical polyhedron (including the black border). Right: a (non-closed) zone defined by the inequalities $1 \leq \vect{x}_1 < 7$, $1 < \vect{x}_2 \leq 5$, $-2 < \vect{x}_1 - \vect{x}_2 \leq 3$.} \label{fig:zone}
\end{figure}
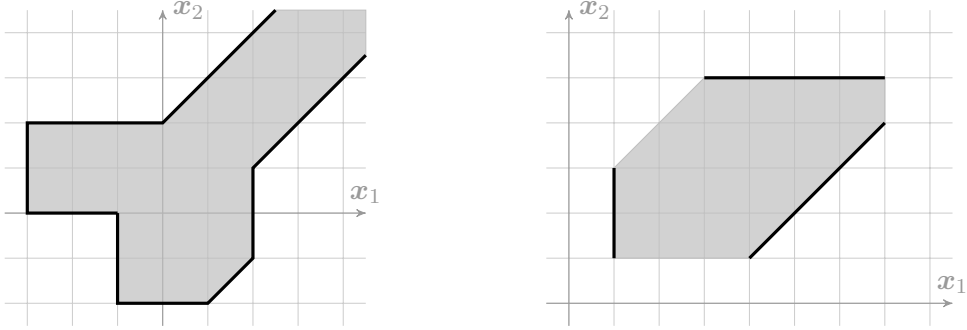

Tropical polyhedra and, more generally, tropically convex sets, have
been introduced and studied in various contexts, 
including optimization~\cite{zimmermann77}, control
theory~\cite{ccggq99},
idempotent functional analysis~\cite{litvinov00}, or combinatorics~\cite{DS}. Several basic results of convex analysis and geometry have been 
shown to have tropical analogues. These include
Hahn-Banach~\cite{zimmermann77,litvinov00,cgqs05,DS}, Minkowski~\cite{GK,BSS,joswig04}, and Carath{\'e}odory/Helly-type~\cite{BriecHorvath04,gauser,GM08} theorems. Some algorithmic aspects have also been studied (\eg~\cite{butkovicH,Joswig2008,AllamigeonGaubertGoubaultDCG2013}). We refer the reader to~\cite{AllamigeonGaubertGoubaultDCG2013} for further references.

\subsection*{Motivation} The present work is motivated by a specific application of tropical algebra to the verification of real-time systems. Indeed, 
a remarkable property of tropical polyhedra is their ability to
concisely encode possibly non-convex sets expressed as disjunctions of
closed zones. A \emph{closed zone}, also known in the literature as \emph{polytrope},
is a set of vectors $\vect{x} \in \R^n$ defined by inequalities of
the form $\vect x_i\geq m_i$, $\vect x_i\leq M_i$, and $ \vect{x}_i \leq k_{ij} + \vect{x}_j$, for certain constants $m_i, M_i, k_{ij} \in \R$. 
More generally, zones are obtained by replacing some of the previous
inequalities by strict ones. See the right-hand side of
Figure~\ref{fig:zone} for an illustration.

Zones are extensively used in the area of verification of 
real-time systems, where these systems are modelled by formalisms such as \eg~timed
automata~\cite{journal/tcs/AlurDd94}
or timed Petri
nets~\cite{conf/ajwsm/Bowden96}.
More precisely, zones are used by model checking tools as \emph{symbolic states},
typically representing infinitely many 
states of the system. 
They can be represented using difference-bound matrices (DBM), which
are essentially adjacency matrices of weighted graphs. This allows for
efficient algorithms for the manipulation of zones during the
verification process.  

An inherent drawback of zones is that they are convex sets, and
consequently they are not closed under set union. This means that during
the analysis process, symbolic states cannot generally be combined,
which potentially leads to state-space explosion. Due to this, tropical
polyhedra have been proposed in~\cite{journals/jlap/LuMMRFL} as a
replacement for zones. However, an important drawback in this approach
is that the analysis of timed automata often requires to express strict
constraints, for instance in the analysis of communication
protocols~\cite{inbook/DavidILS10,DBLP:journals/isse/Kot09}, while
tropical polyhedra are by definition topologically closed.  An example
illustrating these drawbacks will be given in
Section~\ref{sec:tropical_reachability_analysis}.

\subsection*{Contributions} In this paper we first introduce (Section~\ref{sec:mixed_constraints}) a class of non-necessarily closed tropically convex sets. This class is called \emph{tropical polyhedra with mixed constraints}. It can express not only inequalities of the form~\eqref{eq:affine_inequality2} in which the relation $\leq$ has been replaced by $<$, but also finer constraints exploiting the disjunctive character of tropical inequalities. For instance, the inequality
\[
\vect{x}_1 \leq \max(1+\vect{x}_2, \myul{0} + \vect{x}_3)
\]
is going to represent the disjunction of the inequalities $\vect{x}_1 \leq 1+\vect{x}_2$ and $\vect{x}_1 < \vect{x}_3$.
These \emph{mixed inequalities} are defined using coefficients in a semiring of affine germs, which  represent infinitesimal perturbations of reals. 
 
In the second place, we present a tropical counterpart of
Fourier-Motzkin elimination (Section~\ref{sec:fourier_motzkin}). It provides a constructive method to show that the projection in $\maxplus^{n-1}$ of a tropical polyhedron with mixed constraints $\PP \subset \maxplus^n$  is a polyhedron with mixed constraints  (Theorem~\ref{th:fourier_motzkin_maxplus_case}). It computes a representation by mixed inequalities of the projection by combining the defining inequalities of $\PP$. 
Actually, this approach handles more generally
systems of inequalities with coefficients in a totally ordered 
idempotent semiring, modulo some assumptions.
Note that such an analogue of Fourier-Motzkin
algorithm has not been considered previously in the tropical setting,
even in the case of standard (closed) tropical polyhedra.
Fourier-Motzkin elimination also appears as a useful tool to show the polyhedral character of some non-closed tropically convex sets. As an application, we indeed prove that tropical polyhedra with mixed constraints are precisely the tropically convex union of finitely many 
zones, and the intersection of finitely many tropical hemispaces (\ie{}~tropically convex sets whose complements are also tropically convex, which were studied in \eg~\cite{BH-08,KNS}), see Theorem~\ref{th:union_of_zones} and Corollary~\ref{cor:intersection_of_hemispaces}.

Superfluous inequalities may be produced by Fourier-Motzkin algorithm, so
that the size of the constraint systems can grow in a double exponential
way during consecutive applications of the method. In order to eliminate such redundant inequalities, 
in Section~\ref{subsec:elimination_step} we extend to mixed 
inequalities a result of~\cite{AkianGaubertGutermanIJAC2011}
and its subsequent refinement in~\cite{AllamigeonGaubertKatzLAA2011}, 
building
on techniques of these two papers.
The result of~\cite{AkianGaubertGutermanIJAC2011}
shows that deciding the feasibility of a system
of tropical linear inequalities
is (Karp) polynomial-time equivalent to solving mean payoff games.
The result of~\cite{AllamigeonGaubertKatzLAA2011}
shows that deciding logical implications
over tropical linear inequalities is also 
equivalent to solving mean payoff games. Theorem~\ref{th:equivalence_MPG} generalizes these two results to mixed inequalities. 
We note that the present approach (through germs) also yields an
alternative, simpler derivation of the result of~\cite{AllamigeonGaubertKatzLAA2011}. 
Indeed, deciding whether a given inequality of the form~\eqref{eq:affine_inequality2} is logically implied by a system of other inequalities of the same kind amounts to checking if the intersection of a tropical polyhedron with the complement of a closed half-space is empty or not. Such an intersection is obviously a tropical polyhedron with mixed constraints.

Experimentally efficient algorithms have been developed to solve mean payoff games, 
but no polynomial time algorithm is known.
Hence we also provide a weak criterion which allows to eliminate some of
the superfluous inequalities in polynomial time
(Section~\ref{subsec:weak_criterion}). We prove that, in the case of
non-strict inequalities, this weak elimination is sufficient to obtain a
\emph{single}-exponential bound
for Fourier-Motzkin elimination
(Section~\ref{subsec:complexity}).

Finally, Section~\ref{sec:tropical_reachability_analysis} illustrates the
application of tropical polyhedra with mixed constraints to the verification (reachability analysis) of timed automata. 
We show that the operations necessary for forward
exploration of timed automata can be defined on tropical polyhedra with
mixed constraints, using Fourier-Motzkin elimination and the algorithms
developed to eliminate redundant inequalities.

\subsection*{Related work}

The algorithms developed so far for tropical polyhedra usually benefit
from the fact that these can be represented either \emph{externally},
using inequalities (as in~\eqref{eq:affine_inequality}), or \emph{internally}, as sets generated by
finitely many points and rays
(see~\cite{GK09} for details). In contrast, non-closed tropically convex sets may not be finitely
generated. Generating representations of (non-necessarily closed) tropical convex cones have been studied in~\cite{BSS}, and in~\cite{KNS} in the particular case of tropical hemispaces. 
A certain class of possibly infinite generating
representations was treated in~\cite{GaubertKatzKybernetica2004},
however, the associated algorithms rely on the expensive Presburger
arithmetic. Defining non-closed polyhedra using infinitesimal
perturbations of generators also presents some difficulties, see
Remark~\ref{remark:generators} below. Moreover, we should warn the reader
that some
geometric aspects of tropical polyhedra, in particular the notion of
faces, are still not yet understood~\cite{DevelinYu}. 
Thus it does not
seem easy to manipulate non-closed polyhedra from closed ones by
excluding some ``facets'' or ``edges''.

The present tropical Fourier-Motzkin algorithm may be thought of as a
dual of the tropical double description
method~\cite{AllamigeonGaubertGoubaultDCG2013}, in which one
successively eliminates inequalities rather than variables.  In both
algorithms, redundant intermediate data (inequalities or generators) are
produced, and the key to the efficiency of the algorithm lies in the
dynamic elimination of such data. 
Redundant generators can be eliminated in almost linear time
using a combinatorial hypergraph algorithm, however 
the hypergraph criterion appears to have no natural dual analogue which
can detect redundant inequalities.

As mentioned above, the equivalence between mean payoff games and the
emptiness problem for tropical polyhedra with mixed constraints generalizes a result
of~\cite{AkianGaubertGutermanIJAC2011}. Moreover, it generalizes an earlier result~\cite{skutella} concerning finite solutions of
a class of non-strict disjunctive constraints appearing in scheduling.
The non-strict inequality satisfiability problem has also been studied
under the name of ``max-atom problem''
in~\cite{BezemNieuwenhuisCarbonnell08}, 
with motivations from SMT solving. Note here a fundamental difference
between strict and non-strict constraints: in the latter case, for
inequalities with integer coefficients, it is shown
in~\cite{AkianGaubertGutermanIJAC2011}
 that emptiness over
the integers is equivalent to emptiness over
the reals. The same is not true for strict inequalities (consider for example the open hypercube $]0,1[^n$ which is
non-empty, but contains no integer points), so that the
present result for mixed inequalities cannot be deduced from earlier
ones.

The infinitesimal perturbation of reals used in mixed inequalities is based on a semiring of affine germs, which was used in~\cite{GaubertGunawardena98,DhingraGaubertVALUETOOLS2006} to provide policy iteration based methods to solve mean payoff games. It also appeared 
in the context of tropical linear programming, see~\cite[\S~3.7]{GaubertKatzSergeevJSC2012}. The idea here
is that germs allow one to determine algebraically the value of a perturbed game. Related perturbation or parametric game ideas were used in~\cite{AllamigeonGaubertKatzLAA2011,SG10a}. 

\section{Tropical polyhedra with mixed constraints}\label{sec:mixed_constraints}

In the semiring $(\maxplus, \mpplus, \mptimes)$, addition and multiplication admit
neutral elements, namely $-\infty$ and $0$. 
Addition does not generally admit inverses. In contrast, any
non-zero (in the tropical sense) element $x$ admits a multiplicative inverse, 
which is given by $-x$ and will be denoted $x^{-1}$. 
The semiring operations are extended to vectors and
matrices in the usual way, \ie{} $(A \mpplus B)_{ij} \defi A_{ij} \mpplus B_{ij}$ and $(A
\mptimes B)_{ij} \defi \mpplus_k (A_{ik} \mptimes B_{kj})$. 
We will work in the semimodule $\maxplus^n$, for $n\in \Nat$.  Its
elements can be seen as points or vectors and are denoted $\vect x$, $\vect y$, \etc{}. 
The  order $\leq$ on $\maxplus$ is extended to vectors
entry-wise. We equip $\maxplus$ with the topology induced by the metric $(x, y) \mapsto \left|\exp x - \exp y \right|$, and $\maxplus^n$ with the product topology. In the sequel, we also use the completed max-plus semiring $\cmaxplus := \maxplus \cup \{+\infty\}$, with the conventions $x < +\infty$ for all $x \in \maxplus$, $x \mptimes (+\infty) = +\infty$ if $x \neq -\infty$, and $(-\infty) \mptimes (+\infty) =-\infty$.
Finally, given a positive integer $n$, we denote by $[n]$ the set $\{1, \dots, n\}$.

The notion of convexity can be transposed to tropical algebra. A subset 
$\CC$ of $\maxplus^n$ is said to be \emph{(tropically) convex} if it
contains the \emph{tropical segment} 
\[
\{ \lambda \vect{x} \mpplus \mu \vect{y}\mid \lambda , \mu \in \maxplus, \ \lambda \mpplus \mu = \mpone\}
\]
joining any two points $\vect{x}$ and $\vect{y}$ of $\CC$. This is analogous to the usual definition of
convexity, except that in the tropical setting the non-negativity
constraint on $\lambda$ and $\mu$ is implicit (any scalar $x \in
\maxplus$ satisfies $x \geq \mpzero$).

We now introduce the algebraic structure which will allow us to handle possibly strict tropical inequalities. We use a disjoint copy $\pert$ of $\R$ composed of elements denoted $\myul{\alpha}$ for $\alpha \in \R$, and we set $\smaxplus := \cmaxplus \cup \pert$. The \emph{modulus} $\abs{x}$ of an element $x \in \smaxplus$ is defined by:
\[
\abs{x} := 
\begin{cases}
x & \text{if}\ x \in \cmaxplus \; ; \\
\alpha & \text{if} \ x = \myul{\alpha} \in \pert \; .
\end{cases}
\]
The set $\smaxplus$ is totally ordered by the order relation $\sleq$ defined by: 
\[
x \sleq y \Longleftrightarrow 
\begin{cases}
\abs{x} < \abs{y} & \text{if} \ x \in \cmaxplus \ \text{and} \ y \in \pert \; ; \\
\abs{x} \leq \abs{y} & \text{otherwise.}
\end{cases}
\]
We use the notation $x \sless y$ when $x \sleq y$ and $x \neq y$. As an illustration, the Hasse diagram of $\sleq$ over the elements with modulus in $\mathbb{Z} \cup \{ \pm\infty\}$ is given in Figure~\ref{fig:hasse}.

\begin{figure}
\begin{center}
\begin{tikzpicture}
\node (infty) at (-2,3.5) {$+\infty$};
\node (n2) at (0,2) {$2$};
\node (n2p) at (2,1.5) {$\myul{2}$};
\node (n1) at (0,1) {$1$};
\node (n1p) at (2,0.5) {$\myul{1}$};
\node (n0) at (0,0) {$0$};
\node (n0p) at (2,-0.5) {$\myul{0}$};
\node (n-1) at (0,-1) {$-1$};
\node (n-1p) at (2,-1.5) {$\myul{(-1)}$};
\node (-infty) at (4,-3) {$-\infty$};

\node at (2,-2.25) {$\vdots$};
\node at (0,-1.75) {$\vdots$};
\node at (0,3) {$\vdots$};
\node at (2,2.5) {$\vdots$};

\draw (n-1p) -- (-infty) (n0p) -- (-infty) (n1p) -- (-infty) (n2p) -- (-infty);
\draw (n-1p) -- (n-1) -- (n0p) -- (n0) -- (n1p) -- (n1) -- (n2p) -- (n2);
\draw (n-1) -- (infty) (n0) -- (infty) (n1) -- (infty) (n2) -- (infty);
\end{tikzpicture}
\end{center}
\caption{Hasse diagram of the order $\sleq$ over the elements of $\smaxplus$ with modulus in $\mathbb{Z} \cup \{\pm\infty\}$.}\label{fig:hasse}
\end{figure}
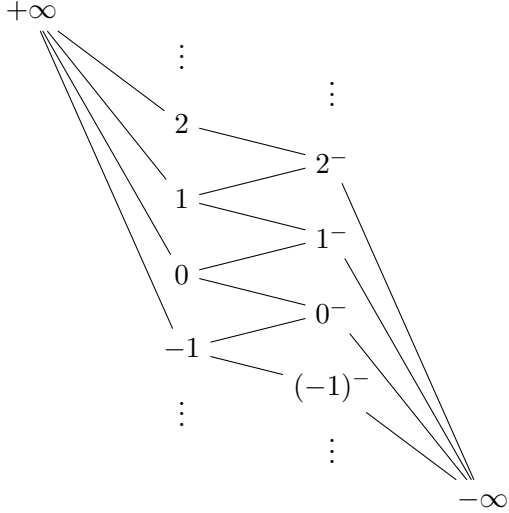
The element $\myul{\alpha}$ can be interpreted as an infinitesimal
perturbation of $\alpha$ of the form $\alpha - \eps$ with $\eps > 0$. Formally, given $x \in \smaxplus$ and $\eps > 0$, the valuation of $x$ at $\eps$, denoted by $x(\eps)$, is the element of $\cmaxplus$ defined as follows:
\[
x(\eps) := 
\begin{cases}
x & \text{if}\ x \in \cmaxplus \; ; \\
\abs{x} - \eps & \text{if} \ x \in \pert \; .
\end{cases}
\]
The valuation is extended to vectors and matrices entry-wise. 

The set $\smaxplus$ has a semiring structure when equipped with the sum of two elements $x, y \in \smaxplus$ defined as the greatest element among them, and the multiplication given by: 
\[
\begin{cases}
x \mptimes y & \text{if}\ x, y \in \R \; ;\\
\myul{(\abs{x} \mptimes \abs{y})} & 
\text{if} \ x \in \pert \ \text{or}\ y \in \pert, \ \text{and} \ x, y \neq \pm \infty\; ;\\
-\infty & \text{if} \ x = -\infty \ \text{or} \ y = -\infty \; ;\\
+\infty & \text{if} \ x, y \neq -\infty, \ \text{and} \ x = +\infty \ \text{or} \ y = +\infty \; .
\end{cases}
\]
By abuse of notation, the multiplication in $\smaxplus$ will be simply denoted by concatenation and the sum by $\mpplus$, as in the case of $\maxplus$. 
Observe that in $\smaxplus$ the neutral elements are still $-\infty$ and $0$,  
and that only the elements $x \in \R$ are invertible with respect to multiplication. Also note that the modulus map is a semiring morphism.

We begin with a technical lemma on the arithmetic operations in the semiring $\smaxplus$.
\begin{lemma}\label{lemma:smaxplus}
The following properties hold:
\begin{compactenum}[(i)]
\item\label{lemma:smaxplus:P3} for any $x, y \in \smaxplus$, $x \sleq y$ if, and only if, $x(\eps) \leq y(\eps)$ for $\eps > 0$ sufficiently small;
\item\label{lemma:smaxplus:P1} for any $x, y \in \smaxplus$ and $\eps > 0$ sufficiently small, $(x \mpplus y)(\eps) = x(\eps) \mpplus y(\eps)$;
\item\label{lemma:smaxplus:P2} for any $x \in \maxplus$, $y \in \smaxplus$, and $\eps > 0$, $(x y)(\eps) = x  y(\eps)$;
\item\label{lemma:smaxplus:P4} for any $x, y, z \in \smaxplus$, $x \sleq y$ implies $x z \sleq y z$, and the converse holds if $z \in \R$;
\item\label{lemma:smaxplus:P5} for any $x, y \in \maxplus$, $x < y$ is equivalent to $x \sleq \myul{0} y$ when $x \in \R$, and to $0 \sleq (+\infty)y$ when $x = -\infty$.
\end{compactenum}
\end{lemma}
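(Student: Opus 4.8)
The plan is to prove all five items by a direct case analysis, organized around the partition of $\smaxplus$ into $\R$, $\{-\infty,+\infty\}$ and the infinitesimal copy $\pert$, and guided by one idea: for $\eps>0$ small, an element $\myul{\alpha}\in\pert$ valuates to a real that is strictly below $\alpha$ yet above every element of modulus $<\alpha$. I would prove~\eqref{lemma:smaxplus:P3} first, since~\eqref{lemma:smaxplus:P1} and part of~\eqref{lemma:smaxplus:P5} reduce to it.

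For~\eqref{lemma:smaxplus:P3}, fix $x,y$ and split according to whether each of them lies in $\cmaxplus$ or in $\pert$. If both lie in $\cmaxplus$, then $x(\eps)=x$, $y(\eps)=y$, and $x\sleq y$ is exactly $\abs x\leq\abs y$, so there is nothing to prove. If $x\in\cmaxplus$ and $y=\myul{\beta}\in\pert$, then $y(\eps)=\beta-\eps$ and $x\sleq y$ reads $\abs x<\beta$, which holds iff $x\leq\beta-\eps$ for all sufficiently small $\eps>0$ (the subcase $x=+\infty$, where both conditions fail, being noted separately). The symmetric case $x=\myul{\alpha}\in\pert$, $y\in\cmaxplus$ uses that $\alpha-\eps<\alpha\leq\abs y$ for every $\eps>0$ whenever $\alpha\leq\abs y$, and that both conditions fail when $\abs y=-\infty$. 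Finally, if $x,y$ both lie in $\pert$, then $x(\eps)=\abs x-\eps$ and $y(\eps)=\abs y-\eps$ compare exactly as $\abs x$ and $\abs y$. Item~\eqref{lemma:smaxplus:P1} then follows: by symmetry assume $x\sleq y$, so $x\mpplus y=y$ and $(x\mpplus y)(\eps)=y(\eps)$; by~\eqref{lemma:smaxplus:P3}, $x(\eps)\leq y(\eps)$ for $\eps$ small, whence $x(\eps)\mpplus y(\eps)=\max(x(\eps),y(\eps))=y(\eps)$.

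For~\eqref{lemma:smaxplus:P2}, run through the subcases $x=-\infty$; $x\in\R$ with $y\in\R$, $y=+\infty$ or $y=-\infty$; and $x\in\R$ with $y=\myul{\beta}$. All but the last are immediate from the conventions $(-\infty)z=-\infty$ and $x(+\infty)=+\infty$ for $x\neq-\infty$; in the last, $xy=\myul{x+\beta}$, so $(xy)(\eps)=(x+\beta)-\eps=x+(\beta-\eps)=x\,y(\eps)$. For the forward implication in~\eqref{lemma:smaxplus:P4}, split on $z$: if $z=-\infty$ then $xz=yz=-\infty$; if $z=+\infty$ then either $x=-\infty$, giving $xz=-\infty\sleq yz$, or $x\neq-\infty$, which forces $y\neq-\infty$ since $\abs y\geq\abs x$, and then $xz=yz=+\infty$. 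If $z\in\pert$ or $z\in\R$, multiplication by $z$ sends an element of modulus $\abs x\in\R$ to one of modulus $\abs x+\abs z$, lying in $\pert$ when $z\in\pert$ and in the same layer as the original when $z\in\R$, with $\pm\infty$ fixed; since $x\sleq y$ implies $\abs x\leq\abs y$ (strictly when $x\in\cmaxplus$, $y\in\pert$), one reads off $xz\sleq yz$. When $z\in\R$, this map is a bijection whose inverse is multiplication by $z^{-1}=-z\in\R$, hence an order isomorphism; applying it to $xz\sleq yz$ and using associativity together with $zz^{-1}=0$ and $x\cdot0=x$ yields the converse.

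Finally, for~\eqref{lemma:smaxplus:P5} one computes the relevant products. If $x\in\R$ and $y\in\maxplus$, then $\myul{0}\,y=\myul{y}$ when $y\in\R$ and $\myul{0}\,y=-\infty$ when $y=-\infty$: in the first case $x\sleq\myul{y}$ is $\abs x<\abs y$, i.e.\ $x<y$; in the second, $x\sleq-\infty$ and $x<-\infty$ both fail. If $x=-\infty$ and $y\in\maxplus$, then $(+\infty)\,y=+\infty$ when $y\in\R$ and $(+\infty)\,y=-\infty$ when $y=-\infty$, so $0\sleq(+\infty)\,y$ holds exactly when $y\neq-\infty$, which is precisely $-\infty<y$. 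I expect no genuine obstacle here: the argument is pure bookkeeping, and the only real pitfalls are the boundary interactions with $\pm\infty$ --- notably the convention $(-\infty)(+\infty)=-\infty$, and the fact (which is what forces the case split in~\eqref{lemma:smaxplus:P5}) that $\myul{0}\cdot(-\infty)=-\infty$ rather than a nonexistent ``$\myul{(-\infty)}$'' --- together with keeping ``$\eps$ sufficiently small'' uniform within the single inequality at issue in~\eqref{lemma:smaxplus:P3} and~\eqref{lemma:smaxplus:P1}.
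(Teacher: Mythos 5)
Your proof is correct and follows essentially the same route as the paper's: a direct case analysis on which of $\R$, $\{\pm\infty\}$, $\pert$ the elements lie in, with (ii) deduced from (i) and the converse of (iv) obtained by multiplying by $z^{-1}$. The only difference is bookkeeping — you spell out the cases the paper dismisses as trivial (e.g.\ all four cases of (i), and the split on $z$ rather than on $\abs{x}$ versus $\abs{y}$ in (iv)) — and all of these checks are accurate.
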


\begin{proof}
\eqref{lemma:smaxplus:P3} The only non-trivial case is when $x \in \R$ and $y \in \pert$, so assume we are in this case. Then, $x \sleq y$ amounts to $x < \abs{y}$. This is equivalent to $x(\eps) = x \leq \abs{y} - \eps = y(\eps)$ for $\eps > 0$ sufficiently small.

\eqref{lemma:smaxplus:P1} Straightforward from Property~\eqref{lemma:smaxplus:P3}.

\eqref{lemma:smaxplus:P2} This property readily follows from the definition of the multiplication.

\eqref{lemma:smaxplus:P4} If $z = \pm \infty$, the first property is straightforward, so assume $\abs{z}\in \R$. Let $x \sless y$ (the implication is trivial when $x=y$). If $\abs{x} < \abs{y}$, we have $\abs{x z} < \abs{y z}$ because $z \neq  \pm \infty $. Moreover, if $\abs{x} = \abs{y} =: \alpha$, then $\alpha \neq \pm \infty$ (as $x$ and $y$ are distinct), and thus necessarily $x = \myul{\alpha}$ and $y = \alpha$ (because $x \sless y$). Hence $x z\in \pert$ and $\abs{x z} = \abs{y z}$. In both cases, we conclude that $x z \sleq y z$.

Conversely, assume that $z \in \R$. Using the first part of the proof, $x z \sleq y z$ implies $x \sleq y$ by multiplying both sides of the inequality $x z \sleq y z$ by $z^{-1}$.

\eqref{lemma:smaxplus:P5} In the first place, we suppose that $x \in \R$. If $x < y$, then $y \in \R$ and $\myul{0} y  = \myul{y}$. Thus, $x < y$ implies $x \sleq \myul{y}$, \ie\ $x \sleq \myul{0}y$. Conversely, if the latter inequality holds, then $y$ is distinct from $-\infty$. Thus, $\myul{0}y = \myul{y}$, and $x \sleq \myul{y}$ ensures that $x < y$.

Assume now that $x=\mpzero$. Note that $(+\infty) y$ is equal to $+\infty$ if $y \neq -\infty$, and to $-\infty$ otherwise. Thus, we have $0 \sleq (+\infty)y$ if, and only if, $y > -\infty=x$.
\end{proof}

A \emph{mixed tropical affine inequality} 
is defined as a constraint of the form
\begin{equation}\label{eq:mixed_affine_inequality}
a_0 \mpplus a_1 \vect{x}_1 \mpplus \cdots \mpplus a_n \vect{x}_n \sleq b_0 \mpplus b_1 \vect{x}_1 \mpplus \cdots \mpplus b_n \vect{x}_n\enspace, 
\end{equation}
where the coefficients $a_i$ on the left-hand side belong $\maxplus$,
while the coefficients $b_i$ on the right-hand side are in
$\smaxplus$. When the set of $\vect{x}$ in $\maxplus^n$ satisfying a mixed inequality is a non-empty
proper subset of $\maxplus^n$, it is called \emph{mixed half-space}. 
\begin{lemma}\label{lemma:val}
A vector $\vect{x} \in \maxplus^n$ 
satisfies~\eqref{eq:mixed_affine_inequality} if, 
and only if, there exists $\eps > 0$ such that
\begin{equation}
a_0 \mpplus a_1 \vect{x}_1 \mpplus \cdots \mpplus a_n \vect{x}_n \leq b_0(\eps) \mpplus b_1(\eps) \vect{x}_1 \mpplus \cdots \mpplus b_n(\eps) \vect{x}_n \enspace. \label{eq:mixed_affine_inequality_val}
\end{equation}
\end{lemma}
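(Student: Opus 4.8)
The plan is to argue both implications of the equivalence by reducing the mixed inequality~\eqref{eq:mixed_affine_inequality} to a statement about the valuations at sufficiently small $\eps>0$, using Lemma~\ref{lemma:smaxplus}. The left-hand side of~\eqref{eq:mixed_affine_inequality} has coefficients $a_i \in \maxplus$, so it is a genuine element of $\maxplus$; denote it $L(\vect x) \defi a_0 \mpplus a_1 \vect x_1 \mpplus \cdots \mpplus a_n \vect x_n$. The right-hand side $R(\vect x) \defi b_0 \mpplus b_1 \vect x_1 \mpplus \cdots \mpplus b_n \vect x_n$ is an element of $\smaxplus$ (in fact of $\cmaxplus \cup \pert$, since $\vect x_j \in \maxplus$), and the claim is that $L(\vect x) \sleq R(\vect x)$ if and only if $L(\vect x) \leq R(\vect x)(\eps)$ in $\maxplus$ for some (equivalently, all sufficiently small) $\eps>0$.

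The key step is to identify $R(\vect x)(\eps)$ with the expression $b_0(\eps) \mpplus b_1(\eps)\vect x_1 \mpplus \cdots \mpplus b_n(\eps)\vect x_n$ appearing on the right of~\eqref{eq:mixed_affine_inequality_val}. This is exactly where Lemma~\ref{lemma:smaxplus}\eqref{lemma:smaxplus:P1} and~\eqref{lemma:smaxplus:P2} come in: by~\eqref{lemma:smaxplus:P2}, since each $\vect x_j \in \maxplus$, we have $(b_j \vect x_j)(\eps) = b_j(\eps)\vect x_j$ for every $\eps>0$; and by~\eqref{lemma:smaxplus:P1}, applied inductively over the finitely many summands, $(b_0 \mpplus \cdots \mpplus b_n \vect x_n)(\eps) = b_0(\eps) \mpplus \cdots \mpplus b_n(\eps)\vect x_n$ for $\eps>0$ sufficiently small. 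Hence $R(\vect x)(\eps)$ equals the right-hand side of~\eqref{eq:mixed_affine_inequality_val} for all small enough $\eps$. Then Lemma~\ref{lemma:smaxplus}\eqref{lemma:smaxplus:P3} applied to the pair $L(\vect x), R(\vect x) \in \smaxplus$ gives $L(\vect x) \sleq R(\vect x)$ iff $L(\vect x)(\eps) \leq R(\vect x)(\eps)$ for $\eps>0$ small; and since $L(\vect x) \in \maxplus \subset \cmaxplus$ we have $L(\vect x)(\eps) = L(\vect x)$, so this reads $L(\vect x) \leq R(\vect x)(\eps)$, i.e.\ precisely~\eqref{eq:mixed_affine_inequality_val}.

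For the forward direction ($\vect x$ satisfies~\eqref{eq:mixed_affine_inequality} $\Rightarrow$ there exists $\eps$) this immediately yields the conclusion by taking any $\eps$ small enough that all three invocations of Lemma~\ref{lemma:smaxplus} are valid. For the converse, one must be slightly careful: we are given~\eqref{eq:mixed_affine_inequality_val} for \emph{some} $\eps>0$, not necessarily a small one. But this is handled by monotonicity of the valuation: as $\eps$ decreases to $0$, each $b_i(\eps)$ is nondecreasing (it equals $b_i$ if $b_i \in \cmaxplus$, and equals $\abs{b_i}-\eps$, which increases as $\eps\downarrow 0$, if $b_i\in\pert$), hence the right-hand side of~\eqref{eq:mixed_affine_inequality_val} is nondecreasing as $\eps\downarrow 0$; so if the inequality holds for some $\eps_0>0$ it holds for all $\eps \in (0,\eps_0]$, in particular for $\eps$ small enough to apply Lemma~\ref{lemma:smaxplus}, and we are back in the previous situation.

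The only mild obstacle is the bookkeeping of ``sufficiently small'' across the finitely many applications of Lemma~\ref{lemma:smaxplus}\eqref{lemma:smaxplus:P1} and~\eqref{lemma:smaxplus:P3}: each such application is valid below some threshold depending on the operands, and one takes the minimum of these finitely many thresholds. Since $n$ is finite this is routine. No genuine difficulty arises; the statement is essentially a translation lemma whose content is entirely absorbed by the properties of valuations already established in Lemma~\ref{lemma:smaxplus}.
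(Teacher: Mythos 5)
Your proof is correct and follows essentially the same route as the paper's: both directions are reduced to Lemma~\ref{lemma:smaxplus}, with properties \eqref{lemma:smaxplus:P1} and \eqref{lemma:smaxplus:P2} identifying the valuation of the right-hand side with the term-by-term valuation, property \eqref{lemma:smaxplus:P3} converting $\sleq$ into an inequality of valuations for small $\eps$, and the converse handled by the monotonicity of $\eps \mapsto b_0(\eps) \mpplus b_1(\eps)\vect{x}_1 \mpplus \cdots \mpplus b_n(\eps)\vect{x}_n$. No gaps.
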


\begin{proof}
If~\eqref{eq:mixed_affine_inequality} is satisfied, then by Property~\eqref{lemma:smaxplus:P3} of Lemma~\ref{lemma:smaxplus} we have
\[
a_0 \mpplus a_1 \vect{x}_1 \mpplus \cdots \mpplus a_n \vect{x}_n \leq (b_0 \mpplus b_1 \vect{x}_1 \mpplus \cdots \mpplus b_n \vect{x}_n)(\eps)  
\]
for $\eps > 0$ sufficiently small, since the left-hand side of~\eqref{eq:mixed_affine_inequality} belongs to $\maxplus^n$. Besides, by Properties~\eqref{lemma:smaxplus:P1} and~\eqref{lemma:smaxplus:P2} of Lemma~\ref{lemma:smaxplus}, it follows that 
\[
(b_0 \mpplus b_1 \vect{x}_1 \mpplus \cdots \mpplus b_n \vect{x}_n)(\eps) = b_0(\eps) \mpplus b_1(\eps) \vect{x}_1 \mpplus \cdots \mpplus b_n(\eps) \vect{x}_n 
\]
for $\eps > 0$ sufficiently small, which shows that~\eqref{eq:mixed_affine_inequality_val} holds. 

Conversely, suppose that~\eqref{eq:mixed_affine_inequality_val} is satisfied for some $\eps > 0$. Then 
\[
a_0 \mpplus a_1 \vect{x}_1 \mpplus \cdots \mpplus a_n \vect{x}_n \leq b_0(\eps') \mpplus b_1(\eps') \vect{x}_1 \mpplus \cdots \mpplus b_n(\eps') \vect{x}_n
\]
for any $\eps' < \eps$ (the map $\eps' \mapsto b_0(\eps') \mpplus b_1(\eps') \vect{x}_1 \mpplus \cdots \mpplus b_n(\eps') \vect{x}_n$ is non-increasing). It follows that~\eqref{eq:mixed_affine_inequality} holds, by Properties~\eqref{lemma:smaxplus:P3}, \eqref{lemma:smaxplus:P1} and~\eqref{lemma:smaxplus:P2} of Lemma~\ref{lemma:smaxplus} and the fact that the left-hand side of~\eqref{eq:mixed_affine_inequality} belongs to $\maxplus$.
\end{proof}

A \emph{tropical polyhedron with mixed constraints} is defined as a set composed of the vectors $\vect{x} \in \maxplus^n$ which satisfy finitely many mixed tropical affine inequalities. 
To contrast with, we use the term \emph{closed tropical polyhedron} when the defining mixed inequalities only involve coefficients in $\maxplus$, \ie\ they are of the form~\eqref{eq:affine_inequality}.
The following proposition establishes that polyhedra with mixed constraints are (possibly non-closed) tropically convex sets. 

\begin{proposition}\label{prop:mixed_polyhedra_convex}
Any tropical polyhedron with mixed constraints is a tropically convex set.
\end{proposition}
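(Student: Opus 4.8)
The plan is to reduce to a single mixed inequality, to write its solution set as a union over $\eps>0$ of the solution sets of the associated max-plus-valued inequalities~\eqref{eq:mixed_affine_inequality_val} via Lemma~\ref{lemma:val}, and to observe that this is a union of a chain of tropically convex sets.

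First I would use that any intersection of tropically convex subsets of $\maxplus^n$ is tropically convex, since the tropical segment joining two of its points lies in each of the sets; as a tropical polyhedron with mixed constraints is a finite intersection of solution sets of mixed inequalities, it then suffices to prove that the solution set $H \subseteq \maxplus^n$ of a single inequality of the form~\eqref{eq:mixed_affine_inequality} is tropically convex. I abbreviate $L(\vect z) \defi a_0 \mpplus a_1 \vect z_1 \mpplus \cdots \mpplus a_n \vect z_n$ and, for $\eps > 0$, $R_\eps(\vect z) \defi b_0(\eps) \mpplus b_1(\eps) \vect z_1 \mpplus \cdots \mpplus b_n(\eps) \vect z_n$, and set $H_\eps \defi \{\vect z \in \maxplus^n : L(\vect z) \leq R_\eps(\vect z)\}$; by Lemma~\ref{lemma:val}, $H = \bigcup_{\eps > 0} H_\eps$.

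It then remains to check two facts. First, each $H_\eps$ is tropically convex: this amounts to saying that a closed tropical half-space with coefficients in the completed semiring $\cmaxplus$ is tropically convex, which follows because $L$ and $R_\eps$ are tropically affine (i.e. $L(\lambda \vect x \mpplus \mu \vect y) = \lambda L(\vect x) \mpplus \mu L(\vect y)$ and likewise for $R_\eps$ when $\lambda \mpplus \mu = \mpone$, the constant terms being reproduced as $\lambda \mpplus \mu = \mpone$), and because $\leq$ on $\cmaxplus$ is compatible with $\mpplus$ and with multiplication by scalars of $\maxplus$, so that $L(\lambda \vect x \mpplus \mu \vect y) = \lambda L(\vect x) \mpplus \mu L(\vect y) \leq \lambda R_\eps(\vect x) \mpplus \mu R_\eps(\vect y) = R_\eps(\lambda \vect x \mpplus \mu \vect y)$ for $\vect x, \vect y \in H_\eps$. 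Second, the family $(H_\eps)_{\eps > 0}$ is a chain: each map $\eps' \mapsto b_i(\eps')$ is non-increasing (constant if $b_i \in \cmaxplus$, equal to $\abs{b_i} - \eps'$ if $b_i \in \pert$), so $\eps_1 \leq \eps_2$ forces $R_{\eps_1} \geq R_{\eps_2}$ pointwise and hence $H_{\eps_2} \subseteq H_{\eps_1}$. Since a union of a chain of tropically convex sets is tropically convex — given two of its points, a single member $H_{\min(\eps_1,\eps_2)}$ of the chain contains both, and the tropical segment joining them lies in that member, hence in $H$ — we conclude that $H$ is tropically convex.

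I expect no serious obstacle once Lemma~\ref{lemma:val} is in hand: the only delicate points are the monotonicity making $(H_\eps)_{\eps>0}$ a chain and the verification that tropical affinity of $R_\eps$ survives in $\cmaxplus$ when some $b_i(\eps)$ equals $+\infty$, for which one only needs the conventions on $x \mptimes (+\infty)$ and $(-\infty)\mptimes(+\infty)$ fixed earlier, every product occurring having a factor in $\maxplus$.
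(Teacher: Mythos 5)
Your proof is correct and follows essentially the same route as the paper: reduce to a single mixed inequality, invoke Lemma~\ref{lemma:val}, and exploit monotonicity in $\eps$ to place two given points in a common valuated (closed) half-space, whose tropical convexity is then standard. Your "union of a chain" packaging is just a restatement of the paper's step of replacing $\eps,\eps'$ by $\min(\eps,\eps')$, so there is no substantive difference.
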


\begin{proof}
Let $\vect{x},\vect{y}$ be two solutions of~\eqref{eq:mixed_affine_inequality}, 
and $\lambda, \mu \in \maxplus$ be such that $\lambda \mpplus \mu = \mpone$. 
By Lemma~\ref{lemma:val}, there exist $\eps, \eps' > 0$ such that:
\begin{align*}
a_0 \mpplus a_1 \vect{x}_1 \mpplus \cdots \mpplus a_n \vect{x}_n & \leq b_0(\eps) \mpplus b_1(\eps) \vect{x}_1 \mpplus \cdots \mpplus b_n(\eps) \vect{x}_n \enspace, \\
a_0 \mpplus a_1 \vect{y}_1 \mpplus \cdots \mpplus a_n \vect{y}_n & \leq b_0(\eps') \mpplus b_1(\eps') \vect{y}_1 \mpplus \cdots \mpplus b_n(\eps') \vect{y}_n \enspace.
\end{align*}
These inequalities are still valid if we replace $\eps$ and $\eps'$ by
$\min(\eps, \eps')$. Hence, we can assume, without loss of generality,
that $\eps = \eps'$.  Then, $\vect{z} = \lambda \vect{x} \mpplus \mu
\vect{y}$ satisfies
\[
a_0 \mpplus a_1 \vect{z}_1 \mpplus \cdots \mpplus a_n \vect{z}_n \leq b_0(\eps) \mpplus b_1(\eps) \vect{z}_1 \mpplus \cdots \mpplus b_n(\eps) \vect{z}_n \enspace, \\
\]
which proves that $\vect{z}$ is a solution 
of~\eqref{eq:mixed_affine_inequality} by Lemma~\ref{lemma:val}. 
Thus, any mixed half-space is tropically convex. 
We conclude that every tropical polyhedron with mixed constraints is
tropically convex, as the intersection of finitely many tropically
convex sets. 
\end{proof}

\begin{example}
  The vectors $\vect{x} \in \Real^2$ satisfying the strict inequality
  $\vect{x}_1 < \max(-1+\vect{x}_2, 0)$, depicted on the left-hand side of
  Figure~\ref{fig:mixed}, are obtained as the real solutions of
  the mixed affine inequality $\vect{x}_1 \sleq \myul{(-1)} \vect{x}_2
  \mpplus \myul{0}$. Similarly, the solutions of
  $\vect{x}_1 \sleq \myul{(-1)} \vect{x}_2 \mpplus 0$ correspond to the
  previous set in which the half-line $\{(0,\lambda) \mid \lambda \leq
  1\}$ is added (middle of Figure~\ref{fig:mixed}). 

  The set depicted on the right-hand side of Figure~\ref{fig:mixed} is
  the tropical polyhedron with mixed constraints defined by the
  following mixed inequalities:
  \begin{equation}
    \begin{aligned}
      (-2)\vect{x}_2 & \sleq \myul{0} \mpplus \myul{0} \vect{x}_1 \\
      -3 & \sleq \vect{x}_1 \\
      0 & \sleq 1\vect{x}_1 \mpplus \myul{0}\vect{x}_2
    \end{aligned}
    \qquad\qquad
    \begin{aligned}
      -2 & \sleq \vect{x}_2 \\
      \vect{x}_1 & \sleq \myul{3} \vect{x}_2 \\
      (-2)\vect{x}_1 & \sleq \myul{0} \mpplus (-1) \vect{x}_2
    \end{aligned}
    \label{eq:running_example}
  \end{equation}
\end{example}

\begin{figure}[tp]
\centering
\begin{tikzpicture}[>=stealth',convex/.style={draw=lightgray,fill=lightgray,fill opacity=0.7},point/.style={blue!50},line/.style={blue!50,ultra thick},convexborder/.style={ultra thick},scale=1.15]

\begin{scope}[scale=0.52]
\draw[gray!40,very thin] (-3.5,-2.5) grid (3.5,4.5);
\draw[gray!80,->] (-3.5,0) -- (3.5,0) node[color=gray!80,above] {$\vect{x}_1$};
\draw[gray!80,->] (0,-2.5) -- (0,5.1) node[color=gray!80,right] {$\vect{x}_2$};
\filldraw[convex] (-3.5,-2.5) -- (0,-2.5) -- (0,1) -- (3.5,4.5) -- (-3.5,4.5) -- cycle;
\end{scope}

\begin{scope}[scale=0.52,xshift=9.5cm]
\draw[gray!40,very thin] (-3.5,-2.5) grid (3.5,4.5);
\draw[gray!80,->] (-3.5,0) -- (3.5,0) node[color=gray!80,above] {$\vect{x}_1$};
\draw[gray!80,->] (0,-2.5) -- (0,5.1) node[color=gray!80,right] {$\vect{x}_2$};
\filldraw[convex] (-3.5,-2.5) -- (0,-2.5) -- (0,1) -- (3.5,4.5) -- (-3.5,4.5) -- cycle;
\draw[convexborder] (0,-2.5) -- (0,1) circle (1.5pt);
\end{scope}

\begin{scope}[scale=0.52,xshift=19cm]
\draw[gray!40,very thin] (-3.5,-2.5) grid (3.5,4.5);
\draw[gray!80,->] (-3.5,0) -- (3.5,0) node[color=gray!80,above] {$\vect{x}_1$};
\draw[gray!80,->] (0,-2.5) -- (0,5.1) node[color=gray!80,right] {$\vect{x}_2$};
\filldraw[convex] (-1,0) -- (-1,-2) -- (1,-2) -- (2,-1) -- (2,1) -- (3.5, 2.5) -- (3.5,4.5) -- (2.5,4.5) -- (0,2) -- (-3,2) -- (-3,0) -- cycle;
\draw[convexborder] (-1,0) circle (1.5pt) -- (-1,-2) -- (1,-2) (2,1) circle (1.5pt) -- (3.5, 2.5) (-3,2) -- (-3,0) ;  
\end{scope}
\end{tikzpicture}
\caption{Tropical polyhedra with mixed constraints (the ends of the
  black segments marked by points are included in the
  polyhedra).}\label{fig:mixed}
\end{figure}
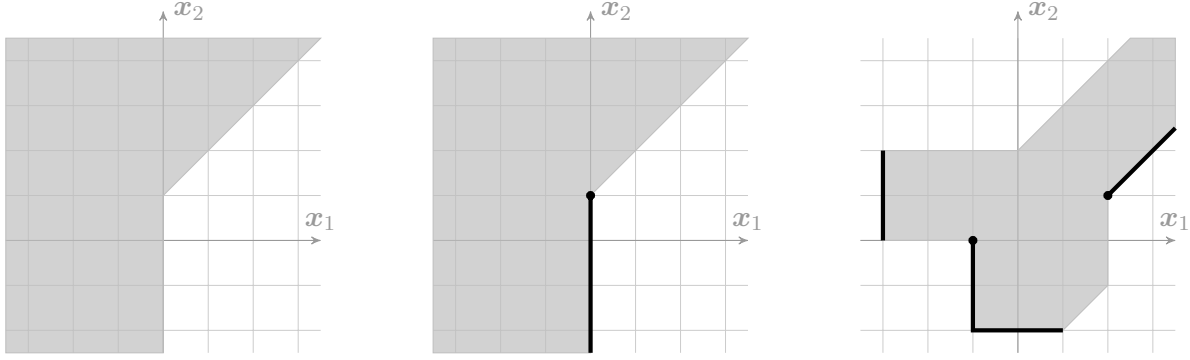

Observe that the inequalities in~\eqref{eq:running_example} have the 
property that no variable (or constant term) ever appears on both the
left- and right-hand sides. The following lemma ensures that this
situation is not restrictive, and that in any inequality of the form~\eqref{eq:mixed_affine_inequality}, we can always assume $a_i=
\mpzero$ or $b_i = -\infty$ for all $i \in \{0, \dots, n\}$.

\begin{lemma}\label{lemma:disjoint_support}
  Let $a, b \in \maxplus$ and $c, d \in \smaxplus$. The set of solutions in $\maxplus$ of the
  mixed tropical affine inequality $a x \mpplus b \sleq c x \mpplus d$
  is given by $\{ x \in \maxplus \mid b \sleq c x \mpplus d \}$ if $a
  \sleq c$, and by $\{ x \in \maxplus \mid a x \mpplus b \sleq d \}$
  otherwise.
\end{lemma}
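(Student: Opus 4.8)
The plan is to do a straightforward case analysis on whether $ax \sgeq b$ or $b \sgreater ax$, together with the dichotomy $a \sleq c$ versus $c \sless a$, reducing the mixed inequality $ax \mpplus b \sleq cx \mpplus d$ to one of the two claimed forms by showing that in each case one of the two summands on the left (or the right) is dominated and hence can be dropped. The key algebraic facts I will use are that $\mpplus$ on $\smaxplus$ is just "take the $\sleq$-greater element", that scalar multiplication is monotone (Property~\eqref{lemma:smaxplus:P4} of Lemma~\ref{lemma:smaxplus}), and — crucially — that multiplication by a fixed element of $\maxplus$ is monotone \emph{from $\smaxplus$ to $\smaxplus$}; I should double-check from the definition of the product in $\smaxplus$ that $u \sleq v$ implies $xu \sleq xv$ for $x \in \maxplus$ and $u,v \in \smaxplus$, since $x$ here ($=a$ or $=c$) lies in $\maxplus$ while the variable $x$ ranges over $\maxplus$, so $ax, b \in \maxplus$ but $cx, d \in \smaxplus$.

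First I would prove the case $a \sleq c$. For any $x \in \maxplus$, monotonicity of multiplication by $x$ gives $ax \sleq cx$, hence $ax \sleq cx \mpplus d$, hence $ax \mpplus b = \max_{\smaxplus}(ax, b) \sleq \max_{\smaxplus}(cx\mpplus d, b)$. But $cx \mpplus d \sgeq b$ is equivalent to $b \sleq cx\mpplus d$, so one direction is immediate: if $b \sleq cx\mpplus d$ then also $ax \sleq cx \mpplus d$ and therefore $ax\mpplus b \sleq cx\mpplus d$. Conversely, if $ax \mpplus b \sleq cx\mpplus d$ then a fortiori $b \sleq ax\mpplus b \sleq cx\mpplus d$. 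This establishes $\{x : ax\mpplus b \sleq cx\mpplus d\} = \{x : b \sleq cx\mpplus d\}$ when $a \sleq c$.

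Next, the case $c \sless a$. Here $a \neq \mpzero$ (otherwise $a \sleq c$), so for every $x \in \maxplus$ we have $ax \in \maxplus$ and, by strict monotonicity of multiplication by $x$ when $x \neq \mpzero$, or simply by monotonicity plus a short argument when $x = \mpzero$, one gets $cx \sleq ax$. Then $cx \mpplus d \sleq ax \mpplus d$, so if $ax \mpplus b \sleq d$ then $ax \mpplus b \sleq d \sleq ax \mpplus d \mpplus \cdots$ — more directly: $cx \mpplus d \sleq ax \mpplus d \sleq (ax \mpplus b) \mpplus d$ is not quite what I want, so instead I argue: $cx\mpplus d \sleq ax \mpplus d$, and if additionally $ax \mpplus b \sleq d$ then $d \sgeq ax \mpplus b \sgeq ax$, so $ax \mpplus d = d$, whence $cx \mpplus d \sleq d = ax\mpplus d$; combining, $ax \mpplus b \sleq d = ax \mpplus d \sgeq cx \mpplus d$ — I will need to tidy this, the clean statement being: when $cx \sleq ax$, the right side satisfies $cx\mpplus d \sleq \max_{\smaxplus}(ax, d)$, and then $ax \mpplus b \sleq d$ implies $ax \mpplus b \sleq \max_{\smaxplus}(ax,d)$; and conversely $ax\mpplus b \sleq cx\mpplus d$ together with $cx\sleq ax$ and $d \sleq cx\mpplus d$ handled by cases on which of $cx, d$ is the max. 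The cleanest route is: when $c \sless a$, for each fixed $x$ we have $cx \sleq ax \sleq ax\mpplus b$, so $cx\mpplus d = \max_{\smaxplus}(cx,d) \sleq \max_{\smaxplus}(ax\mpplus b, d)$; therefore $ax\mpplus b \sleq cx\mpplus d$ iff $ax\mpplus b \sleq d$, because if $ax\mpplus b \sleq cx\mpplus d$ and $cx \sleq ax\mpplus b$ then necessarily the max $cx\mpplus d$ equals $d$ and so $ax\mpplus b\sleq d$, while the reverse implication is trivial since $d \sleq cx\mpplus d$.

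I expect the only subtlety — the "main obstacle", though it is minor — to be the monotonicity of scalar multiplication on the non-invertible side: I need $u \sleq v \Rightarrow xu \sleq xv$ for $x\in\maxplus$ and $u,v\in\smaxplus$ (with possibly $u$ or $v$ equal to $+\infty$ or in $\pert$), which is not literally Property~\eqref{lemma:smaxplus:P4} as stated there (that one has $z \in \smaxplus$ on the right factor and $x,y$ on the left, but the converse only for $z \in \R$); here the roles are reversed, so I will either invoke commutativity of the product in $\smaxplus$ to reuse Property~\eqref{lemma:smaxplus:P4}, or verify monotonicity directly from the four-case definition of the product (the infinite cases $x = \mpzero$ and $u$ or $v = +\infty$ need a quick separate check). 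I also need the edge case $x = \mpzero$: then $ax = \mpzero$, $ax\mpplus b = b$, and one checks directly that $b \sleq d$ is the same as $b \sleq cx\mpplus d = d$ (using $cx = \mpzero$ since $c$ may be $-\infty$ or finite — in all cases $\mpzero \cdot (\text{finite or }-\infty) = -\infty$ in $\smaxplus$), so both claimed descriptions agree with the actual solution set at $x=\mpzero$, consistent with either branch of the lemma. With these observations the proof is a couple of lines per case.
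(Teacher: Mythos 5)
Your overall route is the same as the paper's: split on $a \sleq c$ versus $c \sless a$ and use the monotonicity of right-multiplication (Property~(iv) of Lemma~\ref{lemma:smaxplus}, which applies directly here with $a,c$ in the roles of the varying factors and the variable as the fixed right multiplier, so your worry about ``reversed roles'' is moot). Your first case is correct and is exactly the paper's argument. The $x=\mpzero$ edge case you isolate is also handled correctly (note only that $c$ may additionally lie in $\pert$ or equal $+\infty$; the product conventions still give $c\mptimes\mpzero=\mpzero$, so your conclusion stands).

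There is, however, a gap in the final ``cleanest route'' version of the second case. From $c \sless a$ and Property~(iv) you only derive the non-strict $cx \sleq ax \sleq ax\mpplus b$, and you then assert that $ax\mpplus b \sleq cx\mpplus d$ together with $cx \sleq ax\mpplus b$ forces $cx\mpplus d = d$. That inference is invalid as stated: if the maximum $cx\mpplus d$ were equal to $cx$, the two inequalities would merely give $ax\mpplus b = cx$, from which $d \sleq ax\mpplus b$ rather than the desired $ax\mpplus b \sleq d$. What rules this out is the \emph{strict} inequality $cx \sless ax \sleq ax\mpplus b$ for $x\in\R$, which holds because $a\in\R$ (as $a\sgreater c\sgeq\mpzero$) and $x\in\R$ are invertible, so $ax\sleq cx$ would imply $a\sleq c$ by the converse part of Property~(iv). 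With $cx \sless ax\mpplus b \sleq \max_{\smaxplus}(cx,d)$ the maximum must indeed be $d$; this is precisely the step the paper makes (``$ax\mpplus b \sgeq ax \sgreater cx$''). You mention strict monotonicity earlier in the paragraph and you treat $x=\mpzero$ separately, so all the ingredients are present in your proposal, but the strictness is dropped exactly where it is needed; as written the step would fail.
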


\begin{proof}
In the first place, suppose that $a \sleq c$. Then, 
$a x \sleq c x$ for all $x \in \maxplus$ by 
Property~\eqref{lemma:smaxplus:P4} of Lemma~\ref{lemma:smaxplus}. 
It follows that $a x \sleq c x \mpplus d$ is always satisfied.

Suppose now that $a \sgreater c$. If $x \in \R$, 
we have $a x \mpplus b \sgeq a x \sgreater c x$ 
by Property~\eqref{lemma:smaxplus:P4} of Lemma~\ref{lemma:smaxplus}. 
As a consequence, any $x \in \maxplus$ such that 
$a x \mpplus b \sleq c x \mpplus d$ also satisfies $a x \mpplus b \sleq d$. 
This completes the proof. 
\end{proof}

\begin{remark}\label{remark:generators}
Following the analogy with closed tropical polyhedra, we could also 
consider subsets of $\smaxplus^n$ generated by finitely many points 
and rays (given by vectors with entries in $\smaxplus$), having in mind to encode 
non-closed subsets of $\maxplus^n$ thanks to infinitesimal perturbation 
of generators. More precisely, we could consider that a subset $\CC$ of 
$\smaxplus^n$ encodes the subset $\widetilde{\CC}$ of $\maxplus^n$ 
given by the points $\vect{y} \in \maxplus^n$ such that for all $\eps > 0$ 
there exist $\eps'\in \left] 0,\eps\right[$ and $\vect{x} \in \CC$ verifying $\vect{y} = \vect{x}(\eps')$.
However, this approach presents several 
difficulties. For example, consider the segment $\mcS$ of $\smaxplus^2$ 
joining the points $\vect{v} = (\myul{0},1)$ and $\vect{w} = (\myul{1},1)$, \ie  
\[
\mcS = \{\lambda \vect{v} \mpplus \mu \vect{w} \in \smaxplus^2 \mid \lambda, \mu \in \smaxplus, \ \lambda \mpplus \mu = \mpone\}\; . 
\]
Then, it can be checked that 
$\widetilde{\mcS} = \{(\alpha, 1)\in \maxplus^2  \mid \alpha \in \left[ 0, 1 \right[ \}$. 
Now, observe that if we consider the segment $\mcS'$ joining $\vect{v}'=(\myul{1},0)$ and $\vect{w}'=(0,1)$, we have $\widetilde{\mcS'}=\widetilde{\mcS}$. 
Noticing that $\vect{v}' \not \in \mcS$ and $\vect{v} \not \in \mcS'$,
we see that different polyhedra of $\smaxplus^n$ can encode the same 
subset of $\maxplus^n$. Consequently, it seems far from trivial to determine 
the equality of two subsets of $\maxplus^n$ when they are encoded as 
polyhedra of $\smaxplus^n$.
\end{remark}

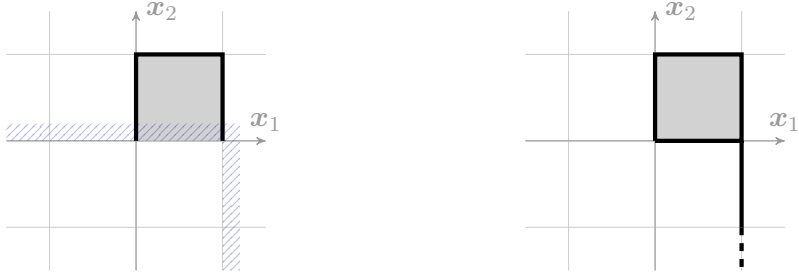
\begin{figure}
\centering
\begin{tikzpicture}[>=stealth',convex/.style={draw=lightgray,fill=lightgray,fill opacity=0.7},point/.style={blue!50},line/.style={blue!50,ultra thick},convexborder/.style={ultra thick},scale=1.15]

\begin{scope}[hs/.style={draw=none,pattern=north east lines,pattern color=blue!60!black,fill opacity=0.5}]
\draw[gray!40,very thin] (-1.5,-1.5) grid (1.5,1.5);
\draw[gray!80,->] (-1.5,0) -- (1.5,0) node[color=gray!80,above] {$\vect{x}_1$};
\draw[gray!80,->] (0,-1.5) -- (0,1.5) node[color=gray!80,right] {$\vect{x}_2$};
\filldraw[convex] (0,0) -- (0,1) -- (1,1) -- (1,0) -- cycle;
\draw[convexborder] (0,0) -- (0,1) -- (1,1) -- (1,0);
\filldraw[hs] (-1.5,0) -- (1,0) -- (1,-1.5) -- (1.2,-1.5) -- (1.2,0.2) -- (-1.5,0.2) -- cycle;
\end{scope}

\begin{scope}[xshift=6cm]
\draw[gray!40,very thin] (-1.5,-1.5) grid (1.5,1.5);
\draw[gray!80,->] (-1.5,0) -- (1.5,0) node[color=gray!80,above] {$\vect{x}_1$};
\draw[gray!80,->] (0,-1.5) -- (0,1.5) node[color=gray!80,right] {$\vect{x}_2$};
\filldraw[convex] (0,0) -- (0,1) -- (1,1) -- (1,0) -- cycle;
\draw[convexborder] (0,0) -- (0,1) -- (1,1) -- (1,0) -- (0,0) (1,0) -- (1,-1);
\draw[convexborder,dashed] (1,-1) -- (1,-1.5);
\end{scope}

\end{tikzpicture}
\caption{Left: a tropical polyhedron with mixed constraints, together with an open mixed half-space (in blue) defining it. Right: the closed polyhedron defined by the corresponding non-strict inequalities.}\label{fig:closure}
\end{figure}

\begin{remark}
As a complement of Remark~\ref{remark:generators}, we point out that the closure of a polyhedron with mixed constraints is apparently harder to compute than in the case of usual convex polyhedra. Indeed, in the latter case, the closure can be simply obtained by substituting $<$ by $\leq$ in the defining inequalities. In contrast, in the current setting, replacing the coefficients $b_i$ of the form $\myul{\beta_i}$ by $\beta_i$ on the right-hand side of mixed inequalities provides a closed tropical polyhedron which may be larger than the closure. For example, consider the polyhedron with mixed constraints defined by the inequalities:
\[
0 \sleq \vect{x}_1 \qquad \vect{x}_1 \mpplus \vect{x}_2 \sleq 1 \qquad 0 \sleq \myul{(-1)} \vect{x}_1 \mpplus \myul{0} \vect{x}_2 \; ,
\]
which is depicted on the left-hand side of Figure~\ref{fig:closure}. The mixed half-space defined by the last inequality is represented in blue. The closure of the polyhedron is the usual unit square. However, the closed polyhedron defined by the inequalities:
\[
0 \leq \vect{x}_1 \qquad \vect{x}_1 \mpplus \vect{x}_2 \leq 1 \qquad 0 \leq (-1) \vect{x}_1 \mpplus \vect{x}_2 \; ,
\]
contains additionally the half-line $\{ (1, \lambda) \mid \lambda \leq 0 \}$ (right-hand side of Figure~\ref{fig:closure}). 
\end{remark}

\section{Tropical Fourier-Motzkin elimination}\label{sec:fourier_motzkin}

In this section, we first present a tropical Fourier-Motzkin elimination method, which allows to eliminate a variable in a finite system of mixed inequalities. Then we apply this method to establish relationships between tropical polyhedra with mixed constraints, zones and tropical hemispaces.

\subsection{The algorithm}\label{subsec:elementary_step}

We first illustrate the algorithm on an example.

\begin{example}\label{ex:fourier_motzkin}
Consider the system given in~\eqref{eq:running_example}, and assume we want to eliminate the
variable $\vect{x}_1$. From the last (rightmost) two inequalities
of~\eqref{eq:running_example}, we know that:
\begin{equation}
\vect{x}_1 \sleq \myul{3} \vect{x}_2 \qquad \vect{x}_1 \sleq \myul{2} \mpplus 1 \vect{x}_2 \label{eq:fm_example1}
\end{equation}
In each inequality involving the variable $\vect{x}_1$ on the right-hand side (\ie{} the leftmost three inequalities of~\eqref{eq:running_example}), we propose to replace $\vect{x}_1$ by the two upper bounds provided by~\eqref{eq:fm_example1}. This produces the following six inequalities:
\begin{equation}
\begin{aligned} 	
(-2) \vect{x}_2 & \sleq \myul{0} \mpplus \myul{3} \vect{x}_2 \\
(-2) \vect{x}_2 & \sleq \myul{2} \mpplus \myul{1} \vect{x}_2
\end{aligned}
\qquad
\begin{aligned} 
-3 & \sleq \myul{3} \vect{x}_2\\
-3 & \sleq \myul{2} \mpplus 1 \vect{x}_2 
\end{aligned}
\qquad
\begin{aligned}
0 & \sleq \myul{4} \vect{x}_2 \\
0 & \sleq \myul{3} \mpplus 2 \vect{x}_2 
\end{aligned}
\label{eq:fm_example2}
\end{equation}
Besides, in each inequality not involving $\vect{x}_1$ on the right-hand side, we remove the term in $\vect{x}_1$ from the left-hand side, if any. From the rightmost inequalities of~\eqref{eq:running_example}, we obtain the following three inequalities:
\begin{equation}
-2 \sleq \vect{x}_2 \qquad \mpzero \sleq \myul{3} \vect{x}_2 \qquad \mpzero \sleq \myul{0} \mpplus (-1) \vect{x}_2 \enspace. \label{eq:fm_example3}
\end{equation}
We claim that the inequalities in~\eqref{eq:fm_example2} and~\eqref{eq:fm_example3} precisely describe the projection on the $\vect{x}_2$ axis of the polyhedron with mixed constraints defined by~\eqref{eq:running_example}. Note that the collection of inequalities obtained in this way is redundant: one inequality, $-2\sleq \vect{x}_2$, suffices.
\end{example}
We now formalize the approach sketched in Example~\ref{ex:fourier_motzkin}. 
Under some assumptions (which are specified in Theorem~\ref{th:fourier_motzkin} below), it applies more generally to systems of constraints over a totally ordered idempotent semiring. 

Recall that a semiring $(\RR,\mpplus,\mptimes,\RRmpzero,\RRmpone)$ is said to be \emph{totally ordered} if there exists a total order $\RRleq$ on $\RR$ such that: 
\begin{enumerate}[(i)]
\item $\RRmpzero\RRleq a$ for all $a\in \RR$,
\item for all $a,b,c \in \RR$, $a \RRleq b$ implies $a \mpplus c \RRleq b \mpplus c$, $a \mptimes c \RRleq b \mptimes c$, and $c \mptimes a \RRleq c \mptimes b$.
\end{enumerate}
The semiring $\RR$ is said to be \emph{idempotent} 
if $a \mpplus a = a$ for all $a \in \RR$.  
The next lemma shows that such a semiring is \emph{naturally ordered}, 
meaning that $a \mpplus b$ is equal to the maximal element among $a$ and $b$.

\begin{lemma}\label{lemma:totally_ordered_idempotent}
Let $(\RR,\mpplus,\mptimes,\RRmpzero,\RRmpone,\RRleq)$ be a totally ordered 
idempotent semiring. Then, for all $a, b \in \RR$, $a \mpplus b = a$ if 
$a \RRgeq b$, and $a \mpplus b = b$ otherwise.
\end{lemma}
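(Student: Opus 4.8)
The plan is to use the total order hypothesis together with idempotency to pin down the value of $a \mpplus b$. First I would observe that, by axiom (ii) of a totally ordered semiring applied with $c = b$, the inequality $a \RRgeq b$ yields $a \mpplus b \RRgeq b \mpplus b$, and by idempotency $b \mpplus b = b$, so $a \mpplus b \RRgeq b$. Symmetrically, applying axiom (ii) with $c = a$ to the (trivially true) inequality $b \RRleq a$ gives $b \mpplus a \RRleq a \mpplus a = a$, hence $a \mpplus b \RRleq a$ (using commutativity of $\mpplus$, which holds in any semiring). So far this shows $b \RRleq a \mpplus b \RRleq a$.

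Next I would upgrade the first of these to an equality. Applying axiom (ii) once more, this time adding $a$ to both sides of $a \mpplus b \RRgeq b$ (monotonicity in the form $x \RRleq y \Rightarrow x \mpplus a \RRleq y \mpplus a$), we get $a \mpplus (a \mpplus b) \RRgeq b \mpplus a$, that is $(a \mpplus a) \mpplus b \RRgeq a \mpplus b$ by associativity and commutativity, hence $a \mpplus b \RRgeq a \mpplus b$ — which is vacuous. A cleaner route: from $a \mpplus b \RRgeq b$ and $a \mpplus b \RRleq a$, combined with the fact that $\RRleq$ is a \emph{total} order, it suffices to show $a \mpplus b \RRgeq a$ as well; then antisymmetry forces $a \mpplus b = a$. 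To see $a \mpplus b \RRgeq a$, note $a = a \mpplus a$ by idempotency, so $a \mpplus b \RRgeq a$ is equivalent to $a \mpplus b \RRgeq a \mpplus a$, which follows by adding $a$ to $b \RRleq a$... but that again needs $b \RRleq a$ to produce $a \mpplus b \RRleq a \mpplus a$, the wrong direction. So instead I would argue directly: since $\RRleq$ is total, either $a \mpplus b \RRgeq a$ or $a \mpplus b \RRleq a$; we already have the latter, and we want the former. The honest argument uses that $a \mpplus b \RRgeq a$ holds in \emph{any} naturally-ordered structure because $a \mpplus b$ dominates each summand: indeed $a \mpplus b \RRgeq a$ is immediate from monotonicity applied to $\RRmpzero \RRleq b$ (axiom (i)), which gives $a \mpplus \RRmpzero \RRleq a \mpplus b$, i.e.\ $a \RRleq a \mpplus b$. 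Combining $a \RRleq a \mpplus b$ with $a \mpplus b \RRleq a$ and antisymmetry of the order yields $a \mpplus b = a$.

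For the remaining case, suppose instead $a \RRngeq b$, i.e.\ (by totality) $a \RRleq b$ and $a \neq b$; in particular $b \RRgeq a$, so by the case already proved (with the roles of $a$ and $b$ swapped, using commutativity $a \mpplus b = b \mpplus a$) we get $b \mpplus a = b$, hence $a \mpplus b = b$. This exhausts both cases, establishing the claim.

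The only genuinely delicate point is making sure one does not accidentally invoke monotonicity in the wrong direction: the axioms only give $x \RRleq y \Rightarrow x \mpplus c \RRleq y \mpplus c$, so the key move is to feed the \emph{zero} inequality $\RRmpzero \RRleq b$ (axiom (i)) into monotonicity to obtain $a \RRleq a \mpplus b$, and separately to feed $b \RRleq a$ into monotonicity after first rewriting $a = a \mpplus a$ via idempotency, or more simply to use $b \mpplus b = b$ directly. Once both bounds $a \RRleq a\mpplus b$ and $a \mpplus b \RRleq a$ are in hand, antisymmetry closes it; no induction or case juggling beyond the single swap is needed. Everything else is a routine application of associativity, commutativity, and the defining axioms.
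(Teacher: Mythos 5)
Your proof is correct and follows essentially the same route as the paper's: both establish $a \RRleq a \mpplus b$ by feeding $\RRmpzero \RRleq b$ into monotonicity, and $a \mpplus b \RRleq a$ from $b \RRleq a$ together with idempotency $a = a \mpplus a$, then conclude by antisymmetry and handle the other case by symmetry. The mid-proof detour through the vacuous inequality could simply be deleted, but the final argument is sound.
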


\begin{proof}
First, observe that $b \RRgeq \RRmpzero$ implies $a \mpplus b \RRgeq a$. 
Analogously, we have $a \mpplus b \RRgeq b$. Now suppose that $a \RRgeq
b$. Then, $a = a \mpplus b$ since $a = a \mpplus a \RRgeq a \mpplus b$. 
\end{proof}

Now we explain how to eliminate $\vect{x}_n$ in a linear system of inequalities over $\RR$ in the variables $\vect{x}_1, \ldots ,\vect{x}_n$. For the sake of simplicity, we extend the operations of $\RR$ to matrices and vectors in the usual way and represent the multiplication $\mptimes$ by concatenation. 

\begin{theorem}[Fourier-Motzkin elimination for systems over totally ordered idempotent semirings]\label{th:fourier_motzkin}
Let $(\RR,\mpplus,\mptimes,\RRmpzero,\RRmpone,\RRleq)$ be a totally ordered idempotent semiring and $\PP \subset \RR^n$ be the solution set of the system 
$A \vect{x} \mpplus \vect{c} \RRleq B \vect{x} \mpplus \vect{d}$, 
where $A, B \in \RR^{p \times n}$ and $\vect{c}, \vect{d} \in \RR^p$ satisfy the following conditions: 
\begin{enumerate}[(i)]
\item\label{item:cond1} $A_{in}$ is (left-)invertible with respect to $\mptimes$ if $A_{in} \neq \RRmpzero$ (we denote its inverse by $A_{in}^{-1}$),\footnote{Here and below, $M_{in}$ denotes the $(i,n)$-entry of matrix $M$, and should not be confused with any abbreviation.} 
\item\label{item:cond2} for any $\alpha\in \RR$ there exists $\beta\in \RR$ such that $\alpha \RRleq B_{in} \beta$ if $B_{in} \neq \RRmpzero$, 
\item\label{item:cond3} either $A_{ij} = \RRmpzero$ or $B_{ij} = \RRmpzero$ for $i\in \oneto{p}$ and $j\in \oneto{n}$. 
\end{enumerate}
Let $\QQ \subset \RR^{n-1}$ be the set defined by the following inequalities in the variables $\vect{x}_1,\ldots , \vect{x}_{n-1}$:
\begin{equation}\label{eq:fm1}
(\mpplus_{j \neq n} A_{ij} \vect{x}_j ) \mpplus \vect{c}_i \RRleq ( \mpplus_{j \neq n} B_{ij} \vect{x}_j ) \mpplus \vect{d}_i \enspace, 
\end{equation}
for all $i \in [p]$ such that $B_{in} = \RRmpzero$, and 
\begin{equation}\label{eq:fm2}
(\mpplus_{j \neq n} A_{ij} \vect{x}_j ) \mpplus \vect{c}_i \RRleq ( \mpplus_{j \neq n} (B_{ij} \mpplus B_{in} A_{kn}^{-1} B_{kj}) \vect{x}_j ) \mpplus \vect{d}_i \mpplus B_{in} A_{kn}^{-1} \vect{d}_k \enspace, 
\end{equation}
for all $i, k \in [p]$ such that $B_{in} \neq \RRmpzero$ and $A_{kn} \neq \RRmpzero$.

Then $\vect{x} \in \QQ$ if, and only if, there exists $\lambda \in \RR$ such that $(\vect{x}, \lambda) \in \PP$.
\end{theorem}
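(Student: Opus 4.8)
The plan is to prove the two implications of the equivalence separately, in both cases reducing to elementary manipulations of inequalities over $\RR$ using only Lemma~\ref{lemma:totally_ordered_idempotent} (so that sums behave as maxima) and the three structural hypotheses \eqref{item:cond1}--\eqref{item:cond3}. Throughout, I would split each of the original $p$ inequalities into three groups according to the entry in column $n$: those with $B_{in} = \RRmpzero$ (where $\vect{x}_n$, if it occurs at all, only occurs on the left via $A_{in}$), those with $A_{in}=\RRmpzero$ but $B_{in}\neq\RRmpzero$ ($\vect{x}_n$ only on the right — these are the ``upper bounds''), and those with $A_{in}\neq\RRmpzero$ ($\vect{x}_n$ only on the left, by condition \eqref{item:cond3}; these are the ``lower bounds''). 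Using condition \eqref{item:cond1} I rewrite every lower-bound inequality $A_{kn}\vect{x}_n \mpplus (\text{rest}_k) \RRleq (\text{rhs}_k)$, and since by idempotency and \eqref{item:cond3} the left-hand side is the max of $A_{kn}\vect{x}_n$ and $\mpplus_{j\neq n}A_{kj}\vect{x}_j\mpplus\vect{c}_k$, this is equivalent to the conjunction of $\mpplus_{j\neq n}A_{kj}\vect{x}_j\mpplus\vect{c}_k \RRleq (\text{rhs}_k)$ and $A_{kn}\vect{x}_n\RRleq(\text{rhs}_k)$, i.e.\ $\vect{x}_n \RRleq A_{kn}^{-1}\big((\mpplus_{j\neq n}B_{kj}\vect{x}_j)\mpplus \vect{d}_k\big)$ after multiplying by $A_{kn}^{-1}$ and using the order compatibility of $\mptimes$. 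Note the first conjunct here is exactly an instance of \eqref{eq:fm1} applied with $i=k$ (since for a lower-bound row $B_{kn}=\RRmpzero$), so it is already recorded in $\QQ$.

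For the direction ``$(\vect{x},\lambda)\in\PP \Rightarrow \vect{x}\in\QQ$'': given a solution $(\vect{x},\lambda)$, each \eqref{eq:fm1} inequality holds because for the corresponding row $B_{in}=\RRmpzero$, so the right-hand side of the original inequality does not involve $\vect{x}_n$ and the left-hand side only decreases when we drop the $A_{in}\lambda$ term (using $\RRmpzero\RRleq A_{in}\lambda$ and Lemma~\ref{lemma:totally_ordered_idempotent}). For each \eqref{eq:fm2} inequality indexed by $(i,k)$: from row $k$ (a lower bound) we extract, as above, $A_{kn}\vect{x}_n\RRleq(\mpplus_{j\neq n}B_{kj}\vect{x}_j)\mpplus\vect{d}_k$, hence $B_{in}\vect{x}_n \RRleq B_{in}A_{kn}^{-1}\big((\mpplus_{j\neq n}B_{kj}\vect{x}_j)\mpplus\vect{d}_k\big)$ by multiplying by $B_{in}A_{kn}^{-1}$ on the left and using order compatibility. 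Substituting this upper bound for the $B_{in}\lambda$ term on the right-hand side of row $i$ (again via Lemma~\ref{lemma:totally_ordered_idempotent}, the right-hand side of row $i$ is the max of $B_{in}\lambda$ and $(\mpplus_{j\neq n}B_{ij}\vect{x}_j)\mpplus\vect{d}_i$, which is $\RRleq$ the right-hand side of \eqref{eq:fm2}), and dropping any $A_{in}\lambda$ term from the left as before, yields \eqref{eq:fm2}. Thus $\vect{x}\in\QQ$.

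For the converse ``$\vect{x}\in\QQ \Rightarrow \exists\lambda:(\vect{x},\lambda)\in\PP$'': fix $\vect{x}\in\QQ$. I must produce a witness $\lambda$. The natural candidate is the infimum of the upper bounds coming from the lower-bound rows, namely $\lambda \defi \bigwedge_{k:\,A_{kn}\neq\RRmpzero} A_{kn}^{-1}\big((\mpplus_{j\neq n}B_{kj}\vect{x}_j)\mpplus\vect{d}_k\big)$, where the meet is taken in the total order $\RRleq$; if there are no lower-bound rows, I instead take $\lambda$ large enough (using condition \eqref{item:cond2}) to satisfy all the upper-bound rows simultaneously, and note that rows with $B_{in}=\RRmpzero$ that still contain $A_{in}\vect{x}_n$ on the left cannot occur when no lower-bound row exists unless $A_{in}=\RRmpzero$, which needs a short argument. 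Assuming at least one lower-bound row exists, $\lambda$ is attained by some index $k_0$, so $A_{k_0 n}\lambda = (\mpplus_{j\neq n}B_{k_0 j}\vect{x}_j)\mpplus\vect{d}_{k_0}$ and $A_{kn}\lambda \RRleq \lambda \cdot(\text{stuff}) \RRleq (\mpplus_{j\neq n}B_{kj}\vect{x}_j)\mpplus\vect{d}_k$ for every lower-bound $k$ (using $\lambda\RRleq A_{kn}^{-1}(\dots)$ and order compatibility of $\mptimes$); combined with the \eqref{eq:fm1} conjunct for row $k$ this gives that row $k$ holds at $(\vect{x},\lambda)$. For an upper-bound row $i$ ($A_{in}=\RRmpzero\neq B_{in}$): its left-hand side is $(\mpplus_{j\neq n}A_{ij}\vect{x}_j)\mpplus\vect{c}_i$, which by \eqref{eq:fm2} with this $i$ and $k=k_0$ is $\RRleq (\mpplus_{j\neq n}(B_{ij}\mpplus B_{in}A_{k_0 n}^{-1}B_{k_0 j})\vect{x}_j)\mpplus\vect{d}_i\mpplus B_{in}A_{k_0 n}^{-1}\vect{d}_{k_0} = (\mpplus_{j\neq n}B_{ij}\vect{x}_j)\mpplus\vect{d}_i \mpplus B_{in}A_{k_0 n}^{-1}\big((\mpplus_{j\neq n}B_{k_0 j}\vect{x}_j)\mpplus\vect{d}_{k_0}\big) = (\mpplus_{j\neq n}B_{ij}\vect{x}_j)\mpplus\vect{d}_i\mpplus B_{in}\lambda$, which is exactly the right-hand side of row $i$ at $(\vect{x},\lambda)$. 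Finally, rows with $B_{in}=\RRmpzero$: by \eqref{item:cond3} the left-hand side is either independent of $\vect{x}_n$, in which case \eqref{eq:fm1} is literally the row, or contains $A_{in}\vect{x}_n$ with $A_{in}\neq\RRmpzero$, in which case I need $A_{in}\lambda$ to not break the inequality — this is the point where I must choose $\lambda$ small, and it forces the witness to be the meet of the lower-bound upper-bounds rather than anything larger; the \eqref{eq:fm1} conjunct handles the $\vect{x}_{\neq n}$ part and $A_{in}\lambda\RRleq A_{in}A_{k_0n}^{-1}(\dots)$, wait — this requires comparing $A_{in}\lambda$ against the right-hand side of row $i$, and the cleanest route is to observe such a row, having $\vect{x}_n$ on the left and $B_{in}=\RRmpzero$, is itself of ``lower-bound type'' and so should have been included among the $k$'s defining $\lambda$; I would simply merge rows with $A_{in}\neq\RRmpzero$ (regardless of $B_{in}$) into the lower-bound class at the outset, which by \eqref{item:cond3} is consistent, making this last case vacuous.

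\textbf{Main obstacle.} The delicate point is the converse direction and specifically the bookkeeping of which rows constrain $\lambda$ from above versus from below, together with the degenerate case where there are no lower bounds on $\vect{x}_n$ at all — there condition \eqref{item:cond2} is exactly what guarantees a feasible (large) $\lambda$ exists, and one must check that no \eqref{eq:fm1} row secretly demands a small $\lambda$. Organizing the case split so that every original row falls cleanly into ``lower bound on $\vect{x}_n$'' or ``$\vect{x}_n$ absent from left and present (or absent) on right'' via \eqref{item:cond3}, and verifying that the meet-of-upper-bounds witness satisfies all three resulting groups, is the crux; the forward direction and all the arithmetic are routine applications of Lemma~\ref{lemma:totally_ordered_idempotent} and the order axioms.
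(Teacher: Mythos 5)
Your proposal is correct and follows essentially the same route as the paper's proof: the forward direction by dropping $A_{in}\lambda$ and substituting the upper bound $\lambda \RRleq A_{kn}^{-1}\bigl((\mpplus_{j\neq n}B_{kj}\vect{x}_j)\mpplus\vect{d}_k\bigr)$ extracted from the lower-bound rows, and the converse by taking the witness $\lambda$ to be the minimum of these upper bounds (with condition~(ii) supplying a large enough $\lambda$ in the degenerate case where no row has $A_{kn}\neq\RRmpzero$). Your closing observation that every row with $A_{in}\neq\RRmpzero$ automatically has $B_{in}=\RRmpzero$ by condition~(iii), and hence is covered by~\eqref{eq:fm1} together with the choice of $\lambda$, is exactly how the paper disposes of the remaining case.
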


\begin{proof}
Assume that $(\vect{x}, \lambda) \in \PP$ for some $\lambda \in \RR$. Then for all $i \in [p]$ such that $B_{in} = \RRmpzero$, we have:
\[
(\mpplus_{j \neq n} A_{ij}\vect{x}_j ) \mpplus \vect{c}_i \RRleq ( \mpplus_{j \neq n} A_{ij} \vect{x}_j ) \mpplus A_{in} \lambda \mpplus \vect{c}_i \RRleq ( \mpplus_{j \neq n} B_{ij} \vect{x}_j ) \mpplus \vect{d}_i \enspace,
\]
which proves that $\vect{x}$ satisfies the inequalities of the form~\eqref{eq:fm1}. Now consider $i, k \in [p]$ such that $B_{in} \neq \RRmpzero$ and $A_{kn} \neq \RRmpzero$. Then, $A_{in}=B_{kn}=\RRmpzero$ by Condition~\eqref{item:cond3}, and $A_{kn}$ is left-invertible by Condition~\eqref{item:cond1}. Since $A_{kn} \lambda \RRleq ( \mpplus_{j \neq n} B_{kj} \vect{x}_j ) \mpplus \vect{d}_k$, we know that 
\[
\lambda \RRleq A_{kn}^{-1} (\mpplus_{j \neq n} B_{kj} \vect{x}_j ) \mpplus A_{kn}^{-1} \vect{d}_k  \enspace.
\]
Replacing $\lambda$ by the latter upper bound in the inequality 
\[
( \mpplus_{j \neq n} A_{ij}\vect{x}_j ) \mpplus \vect{c}_i = ( \mpplus_{j \neq n}  A_{ij}\vect{x}_j ) \mpplus A_{in} \lambda  \mpplus \vect{c}_i \RRleq ( \mpplus_{j \neq n} B_{ij} \vect{x}_j ) \mpplus B_{in} \lambda \mpplus \vect{d}_i
\]
precisely yields inequality~\eqref{eq:fm2}, using distributivity of $\mptimes$ and commutativity of $\mpplus$. Thus, we conclude that $\vect{x} \in \QQ$. 

Conversely, let $\vect{x} \in \QQ$. In the first place, 
assume that $A_{in} = \RRmpzero$ for all $i\in \oneto{p}$. Define   
\[
\lambda \defi \mpplus_{i \in I} \beta_i  \enspace ,
\]
where $I\mydef \{i\in \oneto{p} \mid B_{in} \neq \RRmpzero\}$ and $\beta_i \in \RR$ is such that $(\mpplus_{j \neq n} A_{ij} \vect{x}_j ) \mpplus \vect{c}_i \RRleq B_{in} \beta_i$ for $i \in I$ ($\beta_i$ exists thanks to Condition~\eqref{item:cond2}). Then, if $B_{in} \neq \RRmpzero$, we have 
\[
( \mpplus_{j\neq n} A_{ij}\vect{x}_j ) \mpplus A_{in} \lambda \mpplus \vect{c}_i = ( \mpplus_{j \neq n} A_{ij}\vect{x}_j ) \mpplus \vect{c}_i \RRleq B_{in} \lambda \RRleq ( \mpplus_{j \neq n} B_{ij} \vect{x}_j ) \mpplus B_{in} \lambda \mpplus \vect{d}_i \; .
\]
The same inequality is trivially satisfied if $B_{in} = \RRmpzero$ due to~\eqref{eq:fm1}. It follows that $(\vect{x},\lambda) \in \PP$. 

Now assume that $A_{kn} \neq \RRmpzero$ for some $k\in \oneto{p}$. Define 
\begin{equation}\label{eq:fm_lambda}
\lambda \defi \min_{\substack{k \in \oneto{p}\\ A_{kn} \neq \RRmpzero}} \bigl(A_{kn}^{-1} (\mpplus_{j \neq n} B_{kj} \vect{x}_j ) \mpplus A_{kn}^{-1} \vect{d}_k\bigr)  \enspace, 
\end{equation}
where the operator $\min$ is understood as providing the minimum of its operands with respect to the order $\RRleq$. As a consequence, for all $i$ such that $A_{in} \neq \RRmpzero$, we have $A_{in} \lambda \RRleq ( \mpplus_{j \neq n} B_{ij} \vect{x}_j ) \mpplus \vect{d}_i$. The fact that $B_{in}= \RRmpzero$ (by Condition~\eqref{item:cond3}) and the conjunction with~\eqref{eq:fm1} yield:
\[
(\mpplus_{j \neq n} A_{ij} \vect{x}_j ) \mpplus A_{in} \lambda \mpplus \vect{c}_i \RRleq ( \mpplus_{j \neq n} B_{ij} \vect{x}_j ) \mpplus \vect{d}_i =
(\mpplus_{j \neq n} B_{ij} \vect{x_j} ) \mpplus B_{in} \lambda \mpplus \vect{d_i}
\enspace .
\]
Note that the latter inequality also holds for all $i\in [p]$ such that $A_{in}$ and $B_{in}$ are both equal to $\RRmpzero$. 

Now suppose that $i \in [p]$ satisfies $B_{in} \neq \RRmpzero$ and $k \in [p]$ attains the minimum in~\eqref{eq:fm_lambda}. Since $\vect{x}$ satisfies~\eqref{eq:fm2}, \ie 
\[
(\mpplus_{j \neq n} A_{ij} \vect{x}_j ) \mpplus \vect{c}_i \RRleq ( \mpplus_{j \neq n} B_{ij} \vect{x_j} ) \mpplus \vect{d_i} \mpplus B_{in} A_{kn}^{-1} ( \mpplus_{j \neq n} B_{kj} \vect{x}_j \mpplus \vect{d}_k )
\]
it follows that
\[
(\mpplus_{j \neq n} A_{ij} \vect{x}_j )\mpplus A_{in} \lambda \mpplus \vect{c}_i =
(\mpplus_{j \neq n} A_{ij} \vect{x}_j ) \mpplus \vect{c}_i \RRleq ( \mpplus_{j \neq n} B_{ij} \vect{x_j} ) \mpplus B_{in} \lambda \mpplus \vect{d_i} \enspace ,
\]
because $B_{in}\neq \RRmpzero$ implies $A_{in}= \RRmpzero$ by Condition~\eqref{item:cond3}. This shows that $(\vect{x},\lambda)$ belongs to $\PP$. 
\end{proof}

The case of tropical polyhedra with mixed constraints is obtained by
setting $\RR = \smaxplus$. Note that the conditions of Theorem~\ref{th:fourier_motzkin} 
are satisfied when $\RR = \smaxplus$, due in particular to Lemma~\ref{lemma:disjoint_support} and the fact that any non-zero coefficient on the
left-hand side of a mixed inequality is invertible (as an element of
$\R$). However, a tropical polyhedron with
mixed constraints consists of the solutions belonging to $\maxplus^n$,
while Theorem~\ref{th:fourier_motzkin} applies to the solutions in
$\smaxplus^n$. The following result shows that the projection algorithm
is still valid when restricted to $\maxplus^n$.

\begin{theorem}[Fourier-Motzkin elimination for systems of mixed inequalities]\label{th:fourier_motzkin_maxplus_case}
  Assume $\PP$ and $\QQ$ are defined as in Theorem~\ref{th:fourier_motzkin} with $\RR=\smaxplus$. Then, for all $\vect{x} \in \maxplus^{n-1}$, $\vect{x} \in \QQ$ if, and only if, there exists $\lambda \in
  \maxplus$ such that $(\vect{x},\lambda) \in \PP$.
\end{theorem}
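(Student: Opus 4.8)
The plan is to reduce everything to what Theorem~\ref{th:fourier_motzkin} already provides, and then to correct the last coordinate. The ``if'' direction is immediate: if $\lambda \in \maxplus \subseteq \smaxplus$ with $(\vect{x},\lambda) \in \PP$, then $\vect{x} \in \QQ$ by Theorem~\ref{th:fourier_motzkin}. For the converse, from $\vect{x} \in \QQ$ with $\vect x\in\maxplus^{n-1}$, Theorem~\ref{th:fourier_motzkin} hands us \emph{some} $\mu \in \smaxplus$ with $(\vect{x},\mu) \in \PP$, so the only thing to prove is that the fibre
\[
F \defi \{\, \nu \in \smaxplus \mid (\vect{x},\nu) \in \PP \,\},
\]
which we now know to be non-empty, meets $\maxplus$. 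I would do this by determining the shape of $F$ explicitly.

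To that end I would use Condition~\eqref{item:cond3}: in every one of the $p$ defining inequalities the variable $\vect x_n$ occurs at most once, either on the left with an invertible coefficient $A_{in} \in \R$, or on the right with a coefficient $B_{in} \in \smaxplus$. Fixing $\vect{x} \in \maxplus^{n-1}$ and reading the $i$-th inequality as a constraint on $\nu$, the ``left-occurrence'' rows ($A_{in}\neq\mpzero$) become $\ell_i \mpplus A_{in}\nu \sleq u_i$, where $\ell_i \defi (\mpplus_{j\neq n}A_{ij}\vect x_j)\mpplus\vect c_i$ and $u_i \defi (\mpplus_{j\neq n}B_{ij}\vect x_j)\mpplus\vect d_i$; since $A_{in}$ is invertible and $\ell_i \sleq u_i$ (any element of $F$ satisfies this inequality), they are equivalent to $\nu \sleq A_{in}^{-1}u_i$, so their conjunction is a set $\{\nu \sleq u\}$ with $u \defi \min_i A_{in}^{-1}u_i \in\smaxplus$ (or $u \defi +\infty$ if there are none). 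The ``right-occurrence'' rows ($B_{in}\neq\mpzero$) become $\ell_i \sleq u_i \mpplus B_{in}\nu$; those with $\ell_i \sleq u_i$ impose nothing, and the remaining ones amount to $B_{in}\nu \sgeq \ell_i$. The crucial point I would isolate here is that, because the left-hand coefficients of a mixed inequality lie in $\maxplus$ and $\vect{x}\in\maxplus^{n-1}$, the quantity $\ell_i$ lies in $\maxplus$ --- and in $\R$ for the rows that actually contribute, since for them $\ell_i \sgreater u_i \sgeq \mpzero$. A short case analysis according to whether $B_{in}$ lies in $\R$, in $\pert$, or equals $+\infty$ then shows that $\{\nu\in\smaxplus : B_{in}\nu\sgeq\ell_i\}$ is an up-set whose infimum is a real number (or $\mpzero$) and which contains every real $\nu$ above that infimum. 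Thus $F = \{\nu\sleq u\}\cap U$, with $U$ an up-set containing $\{\nu\in\maxplus : \nu > \ell\}$ for some $\ell\in\maxplus$.

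To finish, I would pick any $\mu\in F$, so that $\ell \sleq \mu \sleq u$, and split into three cases. If $u\in\maxplus$, then $u\in F$ --- it lies in the down-set $\{\nu\sleq u\}$, and in $U$ since $\mu\sleq u$ and $U$ is an up-set --- and one takes $\lambda\defi u$. If $u = +\infty$, one takes $\lambda$ to be any real exceeding $\ell$. If $u$ is a germ $\myul g$, then $\ell\sleq\mu\sleq\myul g$ with $\ell\in\cmaxplus$ forces $\abs{\ell}<g$, so one takes $\lambda$ to be any real strictly between $\ell$ and $g$; it lies below $\myul g$ and above $\ell$, hence in $F$. In every case $\lambda\in F\cap\maxplus$, as required.

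I expect the second paragraph to carry all the difficulty, and within it two points. First, establishing that the lower bounds $\ell_i$ are genuinely forced into $\maxplus$: this is exactly where the mixed-inequality hypothesis (left-hand coefficients in $\maxplus$) is essential, and dropping it would make the statement fail, since a germ lower bound would lie strictly above every real and leave $F\cap\maxplus=\emptyset$ although $F\neq\emptyset$. Second, in the germ case of the final step, checking that lowering $\lambda$ from $\myul g$ to a nearby real below $g$ does not break any lower-bound constraint --- which is what the strict inequality $\abs\ell<g$ guarantees. All the remaining verifications are routine manipulations of the order and the product on $\smaxplus$ via Lemma~\ref{lemma:smaxplus}.
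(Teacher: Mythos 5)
Your proof is correct, and it reaches the conclusion by a route that is organized differently from the paper's even though it rests on the same key observation. Both arguments reduce to the same one-coordinate problem: given $(\vect{x},\mu)\in\PP$ with $\mu\in\smaxplus$, replace $\mu$ by some $\lambda\in\maxplus$. The paper proceeds by \emph{perturbation}: it keeps the given witness $\mu\in\pert\cup\{+\infty\}$ and constructs an explicit real $\lambda'\sleq\mu$ (of the form $\delta\epsilon$, resp.\ $\kappa\abs{\mu}$ with $\kappa<0$) chosen close enough to $\mu$ that the rows where the term $B_{in}\mu$ was the strict maximizer of the right-hand side remain satisfied, while all other rows are handled in one line because lowering the last coordinate can only lower left-hand sides. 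You instead \emph{solve the one-variable residual system outright}, exhibiting the fibre $F$ as the intersection of a principal down-set $\{\nu\sleq u\}$ with an up-set $U$ whose real trace contains all reals above some $\ell\in\maxplus$, and then pick an element of $F\cap\maxplus$ by a three-way case split on whether $u$ lies in $\maxplus$, equals $+\infty$, or is a germ. The crux is identical in both proofs, and you isolate it correctly: since the left-hand coefficients of mixed inequalities and the entries of $\vect{x}$ lie in $\maxplus$, every lower bound forced on the last coordinate has modulus in $\maxplus$, which leaves real room strictly below any germ or $+\infty$ upper bound (this is also exactly where Condition~(iii) of Theorem~\ref{th:fourier_motzkin} enters, to separate upper-bound rows from lower-bound rows). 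Your version buys a complete description of the solution set of the one-variable system --- in particular it makes transparent \emph{why} the statement would fail without the $\maxplus$-valuedness of left-hand sides --- at the price of a slightly longer verification that each set $\{\nu\in\smaxplus : B_{in}\nu\sgeq\ell_i\}$ is an up-set of the advertised shape in the three cases $B_{in}\in\R$, $B_{in}\in\pert$, $B_{in}=+\infty$; the paper's version is shorter because the choice $\lambda'\sleq\mu$ dispenses with any analysis of the upper-bound rows. Both arguments are complete.
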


\begin{proof}
Observe that to prove the theorem, it is enough to show that 
if $\vect{y} \mydef (\vect{x},\lambda) \in \PP$ for some 
$\vect{x} \in \maxplus^{n-1}$ and $\lambda \in \smaxplus$, then there exists 
$\lambda' \in \maxplus$ such that $(\vect{x},\lambda') \in \PP$.

If $\lambda \in \maxplus$, there is nothing to prove, so assume $\lambda \in \pert \cup \{+\infty\}$. We will show that for a certain choice of $\lambda' \in \R$ verifying $\lambda' \sleq \lambda$, the vector $\vect{y}' := (\vect{x}, \lambda') \in \maxplus^n$ belongs to $\PP$. Note that for any such choice of $\lambda'$, the vector $\vect{y}'$ satisfies the inequalities in $A \vect{x} \mpplus \vect{c} \sleq B \vect{x} \mpplus \vect{d}$ indexed by $i \in \oneto{p}$ such that $B_{in} = \pm \infty$. Indeed, in this case, we have
\begin{equation}
(\mpplus_{j} A_{ij} \vect{y}'_j ) \mpplus \vect{c}_i \sleq ( \mpplus_{j} A_{ij} \vect{y}_j ) \mpplus \vect{c}_i \sleq ( \mpplus_{j} B_{ij} \vect{y}_j ) \mpplus \vect{d}_i = ( \mpplus_{j} B_{ij} \vect{y}'_j ) \mpplus \vect{d}_i \; . \label{eq:common_case}
\end{equation}
Thus, we next focus on the inequalities indexed by elements of the set $I := \{ i \in \oneto{p} \mid B_{in} \in \R \cup \pert \}$. In consequence, $A_{in} = \mpzero$ for all $i\in I$.
It is convenient to split the rest of the proof into two cases. 

In the first place, assume $\lambda = +\infty$.
Define $\lambda' :=\delta \epsilon \in \R$, where $\epsilon > 0$ and 
\[
\delta =
\begin{cases}
\mpplus_{i\in I } \abs{B_{in}}^{-1}(\mpplus_{j \neq n} A_{ij} \vect{x}_j\oplus \vect{c}_i) \quad \text{if} \ I\neq \emptyset \; ,\\
0 \quad \text{otherwise}.
\end{cases}
\]
Obviously, $\lambda' \sleq \lambda$. Besides, for all $i\in I$ we have 
\[
(\mpplus_{j \neq n} A_{ij} \vect{x}_j ) \oplus A_{in} \lambda' \oplus \vect{c}_i \sleq ( \mpplus_{j \neq n} B_{ij} \vect{x}_j ) \oplus B_{in} \lambda' \oplus \vect{d}_i \; ,
\]
since $\mpplus_{j \neq n} A_{ij} \vect{x}_j\oplus \vect{c}_i < \abs{B_{in}} \lambda'$ and $A_{in}=\mpzero$. 

Now assume $\lambda \in \pert$. Let $I'$ be the set of indices $i \in I$ such that 
$B_{in} \lambda \sgreater \mpplus_{j \neq n} B_{ij} \vect{x}_j \mpplus \vect{d}_i$. 
Then,  
\[
(\mpplus_{j} A_{ij} \vect{y}_j ) \mpplus \vect{c}_i  = (\mpplus_{j \neq n} A_{ij} \vect{x}_j ) \mpplus \vect{c}_i \sleq B_{in} \lambda 
\]
for all $i \in I'$. For $i\in I'$, let $\nu_i=\mpplus_{j \neq n} A_{ij} \vect{x}_j \mpplus \vect{c}_i$. 
Since $\nu_i$ belongs to $\maxplus$ ($A_{ij}$, $\vect{x}_j$ and $\vect{c}_i$ belong to $\maxplus$) 
and $B_{in} \lambda $ belongs to $\pert$, we necessarily have $\nu_i < \abs{B_{in} \lambda}$. 
Define $\lambda' := \kappa \abs{\lambda} \in \R$, where
\[
\kappa  =
\begin{cases}
\max_{i \in I'} (\nu_i - \abs{B_{in} \lambda})/2 \quad \text{if} \ \{i\in I'\mid \nu_i\in \R \} \neq \emptyset \; ,\\
-1 \quad \text{otherwise}.
\end{cases}
\]
As $\kappa < 0$, we have $\lambda' \sleq \lambda$.
Moreover, if $i \in I'$, then $\nu_i < \kappa \abs{B_{in} \lambda}$ and so 
\[
\begin{split}
(\mpplus_{j} A_{ij} \vect{y}'_j ) \mpplus \vect{c}_i & \sleq ( \mpplus_{j} A_{ij} \vect{y}_j ) \mpplus \vect{c}_i = ( \mpplus_{j \neq n} A_{ij} \vect{x}_j ) \mpplus \vect{c}_i =  \\ 
& = \nu_i\sleq \kappa B_{in} \lambda \sleq \kappa B_{in} \abs{\lambda} = 
B_{in} \lambda' \sleq ( \mpplus_{j} B_{ij} \vect{y}'_j ) \mpplus \vect{d}_i \enspace.
\end{split}
\]
Finally, note that for $i \in I \setminus I'$ the relations in~\eqref{eq:common_case} are still valid because in that case we have $\mpplus_{j \neq n} B_{ij} \vect{x}_j \mpplus \vect{d}_i \sgeq B_{in} \lambda \sgeq B_{in} \lambda'$. 
This completes the proof. 
\end{proof}

\subsection{Characterization of tropical polyhedra with mixed constraints in terms of zones and tropical hemispaces}

In this subsection we discuss some consequences of tropical Fourier-Motzkin elimination. In the first place, we establish that the tropical convex hull of the union of two tropical polyhedra with mixed constraints is a tropical polyhedron with mixed constraints. Recall that the \emph{tropical convex hull} $\tconv(G)$ of a set $G \subset \maxplus^n$ is defined as the set of the vectors of the form 
\[
\lambda_1 \vect{x}^1 \mpplus \cdots \mpplus \lambda_m \vect{x}^m
\]
where $m$ is a positive integer, $\vect{x}^i \in G$, $\lambda_i \in \maxplus$ ($i \in \oneto{m})$, and $\mpplus_{i \in \oneto{m}} \lambda_i = 0$. 

\begin{proposition}\label{prop:over_approx}
Let $\PP, \PP'\subset \maxplus^n$ be polyhedra with mixed constraints, respectively defined by the systems 
$A \vect{x} \mpplus \vect{c} \sleq B \vect{x} \mpplus \vect{d}$ and $A'\vect{x} \mpplus \vect{c}' \sleq B' \vect{x} \mpplus \vect{d}'$, 
and let $\QQ$ be the polyhedron defined by the inequalities which are obtained eliminating $\vect{y}_1,\ldots ,\vect{y}_n,\vect{y}'_1,\ldots ,\vect{y}'_n,\lambda$ and $\mu$ in the following system:
\begin{align}
& \vect{x}  = \vect{y} \mpplus \vect{y}' \;  && \lambda \mpplus \mu  = \mpone \;  \notag \\
& A \vect{y} \mpplus \lambda \vect{c}  \sleq B \vect{y} \mpplus \lambda \vect{d}  \;  && \vect{y}_1\mpplus\cdots \mpplus \vect{y}_n \sleq (+\infty) \lambda \;  \label{eq:over_approx} \\ 
& A' \vect{y}' \mpplus \mu \vect{c}' \sleq B' \vect{y}' \mpplus \mu \vect{d}'  \;  && \vect{y}'_1\mpplus\cdots \mpplus \vect{y}'_n \sleq (+\infty) \mu \;  \notag
\end{align}
Then, we have $\tconv(\PP \cup \PP') = \QQ$.
\end{proposition}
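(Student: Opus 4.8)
The plan is to read the auxiliary system~\eqref{eq:over_approx} as an exact description of the tropical segments joining a point of $\PP$ to a point of $\PP'$, up to the degenerate cases where one of the two coefficients vanishes, and then to reduce an arbitrary tropical convex combination of points of $\PP\cup\PP'$ to this two-point situation by means of the tropical convexity of $\PP$ and $\PP'$ (Proposition~\ref{prop:mixed_polyhedra_convex}). The first ingredient is purely formal: each equality $\vect{x}=\vect{y}\mpplus\vect{y}'$ and $\lambda\mpplus\mu=\mpone$ is a conjunction of two mixed inequalities, the remaining lines of~\eqref{eq:over_approx} are mixed inequalities (recall that $+\infty$ is an admissible right-hand coefficient, and that condition~\eqref{item:cond2} of Theorem~\ref{th:fourier_motzkin} holds for it, for instance with $\beta=\mpone$), and after normalising to disjoint support via Lemma~\ref{lemma:disjoint_support} the hypotheses of Theorem~\ref{th:fourier_motzkin} are met at each elimination step for $\RR=\smaxplus$. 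Applying Theorem~\ref{th:fourier_motzkin_maxplus_case} successively to eliminate $\vect{y}_1,\dots,\vect{y}_n,\vect{y}'_1,\dots,\vect{y}'_n,\lambda,\mu$ then yields that $\vect{x}\in\QQ$ if, and only if, there are $\vect{y},\vect{y}'\in\maxplus^n$ and $\lambda,\mu\in\maxplus$ satisfying all the constraints of~\eqref{eq:over_approx}. It thus remains to prove that this last condition is equivalent to $\vect{x}\in\tconv(\PP\cup\PP')$.

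For the inclusion $\QQ\subseteq\tconv(\PP\cup\PP')$, I would start from such $\vect{y},\vect{y}',\lambda,\mu$. Since $\lambda,\mu\in\maxplus$ (so each of them is $\mpzero$ or an invertible element of $\R$) and $\lambda\mpplus\mu=\mpone$ (so they are not both $\mpzero$), there are essentially three cases. If $\lambda\in\R$, multiplying $A\vect{y}\mpplus\lambda\vect{c}\sleq B\vect{y}\mpplus\lambda\vect{d}$ by $\lambda^{-1}$ (Property~\eqref{lemma:smaxplus:P4} of Lemma~\ref{lemma:smaxplus}, together with distributivity) shows that $\vect{p}\defi\lambda^{-1}\vect{y}$ belongs to $\PP$, and symmetrically $\mu\in\R$ gives $\vect{p}'\defi\mu^{-1}\vect{y}'\in\PP'$; if instead $\lambda=\mpzero$, the constraint $\vect{y}_1\mpplus\cdots\mpplus\vect{y}_n\sleq(+\infty)\lambda=(+\infty)(\mpzero)=\mpzero$ forces $\vect{y}=\mpzero$, and likewise $\mu=\mpzero$ forces $\vect{y}'=\mpzero$. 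Hence, when both $\lambda$ and $\mu$ are real, $\vect{x}=\vect{y}\mpplus\vect{y}'=\lambda\vect{p}\mpplus\mu\vect{p}'$ is a tropical convex combination of $\vect{p}\in\PP$ and $\vect{p}'\in\PP'$; when $\lambda=\mpzero$ (hence $\mu=\mpone$), we get $\vect{y}=\mpzero$ and $\vect{x}=\vect{y}'$, and the primed inequality with $\mu=\mpone$ reads $A'\vect{y}'\mpplus\vect{c}'\sleq B'\vect{y}'\mpplus\vect{d}'$, that is $\vect{x}\in\PP'$; the case $\mu=\mpzero$ is symmetric. In every case $\vect{x}\in\tconv(\PP\cup\PP')$, and the same argument covers an empty $\PP$ or $\PP'$ (a conclusion of the form $\vect{p}\in\PP$ then contradicts the premise, so $\QQ$ is empty as well).

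For the reverse inclusion, write $\vect{x}=\mpplus_{i=1}^{m}\lambda_i\vect{p}^i$ with $\vect{p}^i\in\PP\cup\PP'$, $\lambda_i\in\maxplus$ and $\mpplus_i\lambda_i=\mpone$, and split $[m]$ into the set $S$ of indices with $\vect{p}^i\in\PP$ (ties broken arbitrarily) and its complement $T$. Put $\vect{y}\defi\mpplus_{i\in S}\lambda_i\vect{p}^i$, $\vect{y}'\defi\mpplus_{i\in T}\lambda_i\vect{p}^i$, $\lambda\defi\mpplus_{i\in S}\lambda_i$ and $\mu\defi\mpplus_{i\in T}\lambda_i$, so that $\vect{x}=\vect{y}\mpplus\vect{y}'$ and $\lambda\mpplus\mu=\mpone$. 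By distributivity, $A\vect{y}\mpplus\lambda\vect{c}=\mpplus_{i\in S}\bigl(A(\lambda_i\vect{p}^i)\mpplus\lambda_i\vect{c}\bigr)$, and similarly for the right-hand side; each $\vect{p}^i$ with $i\in S$ satisfies $A\vect{p}^i\mpplus\vect{c}\sleq B\vect{p}^i\mpplus\vect{d}$, so multiplying by $\lambda_i$ (Property~\eqref{lemma:smaxplus:P4}) and summing (monotonicity of $\mpplus$) yields $A\vect{y}\mpplus\lambda\vect{c}\sleq B\vect{y}\mpplus\lambda\vect{d}$. For the auxiliary constraint, either $\lambda\neq\mpzero$, so that $(+\infty)\lambda=+\infty$ dominates $\vect{y}_1\mpplus\cdots\mpplus\vect{y}_n$, or $\lambda=\mpzero$, in which case $\lambda_i=\mpzero$ for all $i\in S$ and hence $\vect{y}=\mpzero$. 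The primed constraints are handled symmetrically, so $\vect{x}\in\QQ$, which completes the equivalence and hence the proof that $\tconv(\PP\cup\PP')=\QQ$.

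The step I expect to require the most care is the role of the auxiliary constraints $\vect{y}_1\mpplus\cdots\mpplus\vect{y}_n\sleq(+\infty)\lambda$ and $\vect{y}'_1\mpplus\cdots\mpplus\vect{y}'_n\sleq(+\infty)\mu$, together with the bookkeeping of the degenerate cases $\lambda=\mpzero$ and $\mu=\mpzero$. Without these constraints, setting $\lambda=\mpzero$ in $A\vect{y}\mpplus\lambda\vect{c}\sleq B\vect{y}\mpplus\lambda\vect{d}$ would only impose $A\vect{y}\sleq B\vect{y}$, allowing $\vect{y}$ to range over the whole homogeneous cone of $\PP$ rather than being forced to $\mpzero$, and $\QQ$ would be strictly larger than $\tconv(\PP\cup\PP')$. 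The coefficient $+\infty$ is precisely the device that makes a single inequality vacuous when $\lambda$ is invertible and collapses $\vect{y}$ to $\mpzero$ when $\lambda=\mpzero$; everything else reduces to elementary manipulation with the order $\sleq$ and the arithmetic of $\smaxplus$ recorded in Lemma~\ref{lemma:smaxplus}.
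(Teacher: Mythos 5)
Your proof is correct, and the second half (the inclusion $\QQ\subseteq\tconv(\PP\cup\PP')$) is essentially identical to the paper's: the same case split on whether $\lambda$ and $\mu$ are invertible, with the constraints $\vect{y}_1\mpplus\cdots\mpplus\vect{y}_n\sleq(+\infty)\lambda$ forcing $\vect{y}=\mpzero$ in the degenerate case. The only genuine divergence is in the other inclusion: the paper shows $\PP\subset\QQ$ and $\PP'\subset\QQ$ by exhibiting the trivial witnesses $(\lambda,\mu)=(\mpone,\mpzero)$ and $(\mpzero,\mpone)$, and then concludes $\tconv(\PP\cup\PP')\subset\QQ$ by invoking the tropical convexity of $\QQ$ (Proposition~\ref{prop:mixed_polyhedra_convex}, since $\QQ$ is itself a polyhedron with mixed constraints). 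You instead take an arbitrary combination $\mpplus_i\lambda_i\vect{p}^i$ and build explicit witnesses $\vect{y},\vect{y}',\lambda,\mu$ by grouping the indices according to whether $\vect{p}^i$ lies in $\PP$ or $\PP'$, verifying the inequalities of~\eqref{eq:over_approx} by tropical linearity and monotonicity. Your route is slightly longer but self-contained and makes the semantics of the auxiliary system more transparent; the paper's is shorter because it outsources the convex-combination bookkeeping to the already-established convexity of mixed polyhedra. Both are valid, and your handling of the empty-index-set and $\lambda=\mpzero$ corner cases is sound.
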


\begin{proof}
In the first place, note that $\PP \subset \QQ$. Indeed, given $\vect{x} \in \PP$, let $\lambda = 0$, $\mu = -\infty$, $\vect{y} = \vect{x}$ and $\vect{y}'_i = -\infty$ for $i\in \oneto{n}$. Then, \eqref{eq:over_approx} is clearly satisfied, and so from Theorem~\ref{th:fourier_motzkin_maxplus_case} we deduce that $\vect{x} \in \QQ$. Similarly, $\PP' \subset \QQ$. Since $\QQ$ is tropically convex, we deduce that $\tconv(\PP \cup \PP') \subset \QQ$.

Conversely, let $\vect{x} \in \QQ$, and consider $\vect{y}, \vect{y}', \lambda, \mu$ as in~\eqref{eq:over_approx}. If both $\lambda$, $\mu$ are distinct from $-\infty$, then $\lambda^{-1} \vect{y} \in \PP$, $\mu^{-1} \vect{y}' \in \PP'$, and $\vect{x} = \lambda (\lambda^{-1} \vect{y}) \mpplus \mu (\mu^{-1} \vect{y}')$, which ensures that $\vect{x} \in \tconv(\PP \cup \PP')$.
Otherwise, if for instance $\mu=\mpzero$, then we necessarily have $\lambda = 0$, and so $\vect{y}\in \PP$. Moreover, from $\vect{y}'_1\mpplus\cdots \mpplus \vect{y}'_n \sleq (+\infty) \mu$ we deduce that $\vect{y}'_i=\mpzero$ for all $i\in \oneto{n}$, and in consequence $\vect{x}= \vect{y}$. This completes the proof, because again we have $\vect{x}= \vect{y} \in \PP \subset \tconv(\PP \cup \PP')$, and so $\QQ \subset \tconv(\PP \cup \PP')$.
\end{proof}

In order to characterize tropical polyhedra with mixed constraints in terms of zones, we first need to extend the definition of zones to $\maxplus$: 
a zone (of $\maxplus^n$) is a set of vectors $\vect{x} \in \maxplus^n$ defined by inequalities of the form 
\begin{equation} \label{eq:proof_aux}
m_i \lhd \vect{x}_i \qquad \vect{x}_i \lhd M_i \qquad  \vect{x}_i \lhd k_{ij} \vect{x}_j  
\end{equation}
where $\lhd \in \{ \leq , < \}$ and $m_i, M_i, k_{ij} \in \maxplus$. 

\begin{theorem}\label{th:union_of_zones}
A subset $\PP$ of $\maxplus^n$ is a tropical polyhedron with mixed constraints if, and only if, it is a tropically convex union of finitely many zones. 
\end{theorem}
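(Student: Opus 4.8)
The plan is to prove the two directions separately, using tropical Fourier-Motzkin elimination (Theorem~\ref{th:fourier_motzkin_maxplus_case}) as the main engine for the ``only if'' direction, and a more elementary argument for the ``if'' direction.

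\medskip

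\textbf{The easy direction (union of zones $\Rightarrow$ polyhedron with mixed constraints).} Suppose $\PP = \tconv(\bigcup_{l=1}^m Z_l)$ where each $Z_l$ is a zone. First I would note, using Lemma~\ref{lemma:smaxplus}\eqref{lemma:smaxplus:P5} (and Lemma~\ref{lemma:val}), that each constraint of the form~\eqref{eq:proof_aux} can be rewritten as a single mixed tropical affine inequality: a strict inequality $\vect{x}_i < k_{ij}\vect{x}_j$ becomes $\vect{x}_i \sleq \myul{0}\, k_{ij}\vect{x}_j$ when $k_{ij}\in\R$ (and $0 \sleq (+\infty)\vect{x}_j$ when $k_{ij}=\mpzero$), while non-strict inequalities are already of the form~\eqref{eq:mixed_affine_inequality}; similarly for the bound constraints $m_i \lhd \vect{x}_i$ and $\vect{x}_i \lhd M_i$. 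Hence each zone $Z_l$ is a tropical polyhedron with mixed constraints. Then I would invoke Proposition~\ref{prop:over_approx}, which shows that the tropical convex hull of the union of two polyhedra with mixed constraints is again a polyhedron with mixed constraints; by an immediate induction on $m$, $\tconv(\bigcup_{l=1}^m Z_l) = \PP$ is a polyhedron with mixed constraints.

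\medskip

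\textbf{The hard direction (polyhedron with mixed constraints $\Rightarrow$ union of zones).} Here the idea is that a zone is exactly a tropical polyhedron with mixed constraints in which every defining inequality has the ``elementary'' shape appearing in~\eqref{eq:proof_aux} — i.e.\ after clearing to disjoint supports via Lemma~\ref{lemma:disjoint_support}, each inequality involves at most two variables/constants and has a single monomial on each side. So I want to show that an arbitrary system of mixed inequalities can be turned into a (tropical convex) union of such elementary systems. The strategy is to eliminate, one at a time, the ``complex'' terms. Concretely: take a polyhedron $\PP\subset\maxplus^n$ defined by $A\vect x \mpplus \vect c \sleq B\vect x \mpplus \vect d$. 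I would add a fresh variable for each monomial occurring in the system — that is, embed $\PP$ into a higher-dimensional space $\maxplus^N$ by introducing auxiliary coordinates $\vect z_{(i,j)}$ forced to equal $B_{ij}\vect x_j$ (and $\vect z_{(i,0)} = \vect d_i$, etc.), so that $\PP$ is the projection onto the first $n$ coordinates of a polyhedron $\widetilde\PP$ whose defining inequalities, after normalization, are each either elementary (two terms, one per side) or of the form $\mpplus_k \vect z_k \sleq$ (a single term) — the latter being the ``max on the left is dominated'' type. By Theorem~\ref{th:fourier_motzkin_maxplus_case}, projecting $\widetilde\PP$ back down to $\maxplus^n$ preserves the property of being a polyhedron with mixed constraints; but more importantly, I want the projection of a \emph{union of zones} to be a union of zones, and this follows because Fourier-Motzkin elimination commutes with finite unions and, applied to an elementary system, produces elementary inequalities. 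The main obstacle, and the place where real work is needed, is handling inequalities with several monomials on the \emph{right-hand side}: a constraint like $\vect x_i \sleq b_j \vect x_j \mpplus b_k \vect x_k$ is genuinely disjunctive — it holds iff $\vect x_i \sleq b_j\vect x_j$ \emph{or} $\vect x_i \sleq b_k\vect x_k$ — and it is precisely this disjunction that forces $\PP$ to be a union (rather than a single) zone. So the inductive step must be: whenever a defining inequality has $\geq 2$ monomials on the right, split $\PP$ into finitely many pieces according to which monomial ``achieves the bound,'' each piece having one fewer complex inequality; then recurse. One must check this splitting is compatible with tropical convexity of the final union — which is where the hypothesis that $\PP$ itself is a polyhedron with mixed constraints (hence automatically tropically convex by Proposition~\ref{prop:mixed_polyhedra_convex}) is used: the union of the zone-pieces equals $\PP$, so it is tropically convex by construction.

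\medskip

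\textbf{Assembling the argument.} Putting the two directions together gives the equivalence. I expect the delicate points to be (a) the bookkeeping in the disjunctive splitting — one must be careful that when a right-hand monomial $b_j\vect x_j$ is in $\pert$ or involves $+\infty$, the ``achieves the bound'' case analysis is done with respect to $\sleq$ and the valuation characterization of Lemma~\ref{lemma:val}, not naive real comparison; and (b) verifying that after introducing auxiliary variables and applying Theorem~\ref{th:fourier_motzkin_maxplus_case}, the elementary shape of inequalities is preserved, so that what comes out is literally a zone in the sense of~\eqref{eq:proof_aux} rather than merely some polyhedron with mixed constraints. A cleaner alternative, which I would also consider, is to avoid auxiliary variables entirely: argue directly that $\PP$ equals the union, over all ``selection functions'' $\sigma$ picking one right-hand monomial per defining inequality, of the polyhedra $\PP_\sigma$ obtained by replacing each inequality by the selected single-monomial one; each $\PP_\sigma$ is then a system of two-term inequalities, which after saturation (computing all consequences $\vect x_i \sleq k\,\vect x_j$ via a shortest-path/Kleene-star argument in the associated weighted digraph, exactly as for difference-bound matrices) is seen to be a zone — the finiteness and the $\sleq$-versus-$<$ distinction being governed by whether the relevant circuit weight lies in $\maxplus$ or in $\pert$. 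This DBM-style saturation is the concrete computation I would expect the authors' proof to perform, and it is the step where the germ semiring $\smaxplus$ earns its keep.
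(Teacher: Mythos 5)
Your ``cleaner alternative'' for the hard direction is exactly the paper's argument: the paper writes $\PP$ as the finite union, over all choices of a maximizing term on the right-hand side of each inequality, of polyhedra whose defining inequalities have a single non-$\mpzero$ coefficient on the right, and it handles the easy direction just as you do (rewrite each zone constraint as a mixed inequality via Lemma~\ref{lemma:smaxplus}, then apply Proposition~\ref{prop:over_approx} together with the remark that a tropically convex union equals its tropical convex hull). Your first route through auxiliary variables and an extra Fourier-Motzkin projection is an unnecessary detour, which you rightly set aside; Fourier-Motzkin enters this theorem only through Proposition~\ref{prop:over_approx}.

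Two corrections on the endgame of the hard direction. First, the DBM-style saturation is not needed: a zone is \emph{by definition} any subset of $\maxplus^n$ cut out by constraints of the form~\eqref{eq:proof_aux}, so once every inequality has one monomial per side you are done --- no Kleene star or canonical form is required. Second, and more substantively, your claim that each selected piece $\PP_\sigma$ ``is a zone'' is not literally correct over $\maxplus^n$, because of the behaviour of germ coefficients at $\mpzero$: for instance $\vect{x}_i \sleq \myul{0}\,k_{ij}\vect{x}_j$ is satisfied when $\vect{x}_i=\vect{x}_j=\mpzero$, whereas $\vect{x}_i < k_{ij}\vect{x}_j$ is not, so this single two-term mixed inequality is equivalent to the \emph{disjunction} of $\vect{x}_i < k_{ij}\vect{x}_j$ with ($\vect{x}_i\leq\mpzero$ and $\vect{x}_j\leq\mpzero$); a coefficient $+\infty$ likewise unfolds into a disjunction. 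Hence each $\PP_\sigma$ is itself only a finite union of zones. The theorem survives --- a finite union of finite unions is a finite union, and the whole union equals $\PP$, hence is tropically convex by Proposition~\ref{prop:mixed_polyhedra_convex} --- but this case analysis is genuine content of the paper's proof (its explicit table of equivalences), not mere bookkeeping to be deferred.
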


\begin{proof}
Observe that when $\lhd$ is equal to $\leq$ in~\eqref{eq:proof_aux}, these inequalities are equivalent to the ones obtained replacing $\lhd$ by $\sleq$. 
Moreover, we have
\[
\begin{array}{r@{{}<{}}l@{{}\iff{}}l}
m_i & \vect{x}_i &
\begin{cases}
m_i \sleq \myul{0} \vect{x}_i & \text{if} \   m_i \in \R \\ 
0 \sleq (+\infty)  \vect{x}_i & \text{if} \  m_i =\mpzero 
\end{cases} \\[3ex]
\vect{x}_i & M_i & \vect{x}_i \sleq \myul{0} M_i \quad \text{if} \  M_i \in \R \\[2ex]
\vect{x}_i & k_{ij} \vect{x}_j & ( \vect{x}_i \sleq (\myul{0}) k_{ij}  \vect{x}_j \; \text{and} \; 0\sleq (+\infty) \vect{x}_j) \quad  \text{if} \  k_{ij} \in \R 
\end{array}
\]
In consequence, any zone is a tropical polyhedron with mixed constraints. Then, by Proposition~\ref{prop:over_approx}, so is the tropical convex hull of the union of finitely many zones. The ``if'' part  of the theorem follows from the fact that if the union of finitely many zones is tropically convex, then it coincides with its tropical convex hull. 

Suppose now that $\QQ$ is a polyhedron defined by mixed inequalities of the form~\eqref{eq:mixed_affine_inequality}, in which only one of the coefficients $b_j$ is distinct from $-\infty$. 
If this coefficient is $b_0$, as we can assume that $b_0\neq +\infty$, it follows that~\eqref{eq:mixed_affine_inequality} can be rewritten as a system of inequalities of the form  $\vect{x}_i \sleq \beta M_i$, with $\beta \in \{0, \myul{0}\}$ and $M_i\in \R$. If the considered coefficient is $b_j$ for $j \in \oneto{n}$, then~\eqref{eq:mixed_affine_inequality} can be rewritten as a system of inequalities of the form $m_j \sleq \beta \vect{x}_j$ and $\vect{x}_i \sleq \beta k_{ij} \vect{x}_j$, with $\beta \in \{0, \myul{0},+\infty \}$ and $m_j, k_{ij} \in \R$. Since 
\begin{align*}
& \vect{x}_i \sleq \beta k_{ij} \vect{x}_j \iff 
\begin{cases}
 \vect{x}_i \leq  k_{ij} \vect{x}_j & \text{if} \  \beta= 0 \\
 \vect{x}_i < k_{ij} \vect{x}_j \; \text{or} \; (\vect{x}_i \leq \mpzero \; \text{and} \;\vect{x}_j \leq \mpzero ) & \text{if} \  \beta= \myul{0} \\
 \mpzero <  \vect{x}_j \; \text{or} \; (\vect{x}_i \leq \mpzero \; \text{and} \;\vect{x}_j \leq \mpzero ) & \text{if} \  \beta= +\infty 
\end{cases} \\
& \vect{x}_i \sleq \beta M_i \iff 
\begin{cases}
\vect{x}_i \leq  M_i & \text{if} \  \beta= 0 \\ 
\vect{x}_i <  M_i & \text{if} \  \beta= \myul{0} 
\end{cases} 
\quad \text{and} \quad
 m_j \sleq \beta \vect{x}_j \iff
\begin{cases}
m_j \leq  \vect{x}_j & \text{if} \   \beta=0 \\ 
m_j < \vect{x}_j & \text{if} \   \beta = \myul{0} \\ 
-\infty < \vect{x}_j & \text{if} \  \beta = +\infty 
\end{cases} 
\end{align*}
it follows that $\QQ$ is a finite union of zones. 

The ``only if'' part of the theorem follows from the fact that any polyhedron with mixed constraints can be written as a finite union of polyhedra of the form considered in the previous paragraph (it suffices to choose, in each inequality, one term in the right-hand side to be the maximizing one). 
\end{proof}

Theorem~\ref{th:union_of_zones} raises the problem, of independent
interest, of determining whether a given union of finitely many zones is
tropically convex. To our knowledge, this problem has not been studied
so far, even in the case of closed zones. A naive solution consists in
computing the tropical convex hull of the union of zones (using
Proposition~\ref{prop:over_approx}), and checking whether it intersects
the complement of the union of zones (the latter can be expanded into a
union of zones, and the intersection test requires the techniques developed in Section~\ref{subsec:elimination_step}, see Theorem~\ref{th:equivalence_MPG}). This approach would be particularly expensive. Yet, it is similar to the technique implemented in the UPPAAL DBM library~\cite{DBMLib} to test if a union of zones is a zone. Whether there exists a more efficient method is left for future work.

Now we study the relationship between tropical polyhedra with mixed constraints and tropical hemispaces. Recall that a \emph{tropical hemispace} is a tropically convex set whose complement is also tropically convex. Tropical hemispaces and mixed half-spaces share the property that their closure is a closed half-space
(in the case of hemispaces, this is proved in~\cite{BH-08}). In fact, Proposition~\ref{prop:hemispace} and Example~\ref{remark:hemispace} below show that mixed half-spaces are a proper subclass of hemispaces.
\begin{proposition}\label{prop:hemispace}
Mixed half-spaces are tropical hemispaces. 
\end{proposition}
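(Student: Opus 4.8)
The plan is to show that a mixed half-space $H$ is a hemispace by proving that its complement $\maxplus^n \setminus H$ is tropically convex; that $H$ itself is tropically convex has already been established in Proposition~\ref{prop:mixed_polyhedra_convex}. By Lemma~\ref{lemma:disjoint_support} we may assume the defining mixed inequality has disjoint supports on the two sides, so $H$ is the set of $\vect{x} \in \maxplus^n$ satisfying $a_0 \mpplus \mpplus_{j \in S} a_j \vect{x}_j \sleq b_0 \mpplus \mpplus_{j \in T} b_j \vect{x}_j$ with $S \cap T = \emptyset$, the $a_j$ in $\maxplus$ and the $b_j$ in $\smaxplus$. The complement is then the set of $\vect{x}$ satisfying the reverse \emph{strict} inequality $b_0 \mpplus \mpplus_{j \in T} b_j \vect{x}_j \sless a_0 \mpplus \mpplus_{j \in S} a_j \vect{x}_j$.

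Next I would translate this into the valuation picture via Lemma~\ref{lemma:val}: a point $\vect{x}$ lies in the complement if and only if there is some $\eps > 0$ with $b_0(\eps) \mpplus \mpplus_{j\in T} b_j(\eps)\vect{x}_j < a_0 \mpplus \mpplus_{j\in S} a_j \vect{x}_j$ (using that the left side of the original inequality is real-valued, so negating $\sleq$ gives a strict ordinary inequality of the valuated expressions for small $\eps$). Now take two points $\vect{x}, \vect{y}$ in the complement, with witnesses $\eps$ and $\eps'$; replacing both by $\min(\eps,\eps')$ we may assume a common $\eps$, exactly as in the proof of Proposition~\ref{prop:mixed_polyhedra_convex}. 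Let $\vect{z} = \lambda \vect{x} \mpplus \mu \vect{y}$ with $\lambda \mpplus \mu = \mpone$. The left-hand side $b_0(\eps) \mpplus \mpplus_{j\in T} b_j(\eps) \vect{z}_j$ is, by distributivity and $\max$-superadditivity of a linear form, bounded above by $\max\bigl(\lambda\,\mathrm{LHS}(\vect{x}),\ \mu\,\mathrm{LHS}(\vect{y})\bigr)$ up to the constant term $b_0(\eps)$; here I must be slightly careful because the constant term $b_0(\eps)$ is not scaled by $\lambda$ or $\mu$, but since $\lambda \mpplus \mu = \mpone$ at least one of $\lambda,\mu$ equals $\mpone$, which lets me dominate $b_0(\eps)$ by the corresponding scaled copy. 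Symmetrically, the right-hand side $a_0 \mpplus \mpplus_{j \in S} a_j \vect{z}_j$ \emph{dominates} $\max\bigl(\lambda\,\mathrm{RHS}(\vect{x}),\ \mu\,\mathrm{RHS}(\vect{y})\bigr)$ (a $\max$-linear form over $\maxplus$ is superadditive with respect to tropical combinations, and again the constant $a_0$ is absorbed using $\lambda\mpplus\mu=\mpone$). Combining, $\mathrm{LHS}(\vect{z}) < \mathrm{RHS}(\vect{z})$, so $\vect{z}$ lies in the complement, which is therefore tropically convex.

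The main obstacle is the asymmetric bookkeeping around the constant terms $a_0$ and $b_0$: the inequality $\mathrm{LHS}$ we want to bound from above has its constant \emph{unscaled}, while a genuine tropical-linear form scales homogeneously, so one has to invoke $\lambda \mpplus \mu = \mpone$ at precisely the right moment and handle separately the two cases $\lambda = \mpone$ and $\mu = \mpone$ (or the degenerate cases where $\lambda$ or $\mu$ equals $\mpzero$, where the statement is immediate). A clean way to package this is to homogenize: introduce an extra coordinate $\vect{x}_0$ fixed to $\mpone$, rewrite the inequality with all terms linear, and note that $\vect{z}_0 = \lambda \mpone \mpplus \mu \mpone = \mpone$ is preserved under tropical combination — then both sides are honest tropical-linear forms and the super/sub-additivity arguments apply uniformly. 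Everything else is the same two-line valuation manipulation used in Proposition~\ref{prop:mixed_polyhedra_convex}, just with the strict reversed inequality, so I do not expect further difficulties.
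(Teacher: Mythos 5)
Your overall strategy is the same as the paper's: Proposition~\ref{prop:mixed_polyhedra_convex} already gives tropical convexity of the half-space, so everything reduces to showing that the complement, i.e.\ the set of $\vect{x}$ with $b_0 \mpplus b_1\vect{x}_1\mpplus\cdots\mpplus b_n\vect{x}_n \sless a_0\mpplus a_1\vect{x}_1\mpplus\cdots\mpplus a_n\vect{x}_n$, is tropically convex; and your bookkeeping with the constant terms (absorbing $a_0$ and $b_0$ using $\lambda\mpplus\mu=\mpone$, or homogenizing) is sound. The difference is that you route the argument through the valuation picture of Lemma~\ref{lemma:val}, whereas the paper stays entirely inside $\smaxplus$: it takes $\lambda=\mpone$, disposes of $\mu=\mpzero$ trivially, and for $\mu\in\R$ invokes Property~\eqref{lemma:smaxplus:P4} of Lemma~\ref{lemma:smaxplus} (multiplication by an invertible element preserves strict inequalities) to get $\mu\cdot\mathrm{RHS}(\vect{y})\sless\mu\cdot\mathrm{LHS}(\vect{y})$, then tropically sums the two strict inequalities. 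That direct route is shorter and avoids the one genuine problem in your write-up.

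That problem is the quantifier on $\eps$. Negating Lemma~\ref{lemma:val}, a point lies in the complement if and only if the reversed strict valuated inequality holds \emph{for all} $\eps>0$ (equivalently, for all sufficiently small $\eps>0$, by monotonicity of $\eps\mapsto b_j(\eps)$), not ``for some $\eps>0$''. For instance the point $\vect{x}_1=0$ satisfies $\vect{x}_1\sleq\myul{1}$ and so lies \emph{in} that half-space, yet $1-\eps<0$ for $\eps=2$, so it would be ``in the complement'' under your existential reading. With that reading, your final step --- $\vect{z}$ satisfies the strict valuated inequality at one $\eps$, hence lies in the complement --- is exactly the false direction of the claimed equivalence; and the monotonicity runs the wrong way for ``replace the witnesses by their minimum'': shrinking $\eps$ \emph{increases} the $b$-side, which preserves $\sleq$ as in Proposition~\ref{prop:mixed_polyhedra_convex} but can destroy the reversed strict inequality. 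The repair is immediate: use the universal characterization, fix an arbitrary $\eps>0$, run your (correct) tropical-combination computation at that $\eps$, and conclude since $\eps$ was arbitrary. With that change the proof closes.
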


\begin{proof}
We already proved in Proposition~\ref{prop:mixed_polyhedra_convex} that mixed half-spaces are tropically convex.

The complement of the mixed half-space defined by~\eqref{eq:mixed_affine_inequality} consists of the vectors $\vect{x} \in \maxplus^n$ such that 
\begin{equation}
a_0 \mpplus a_1 \vect{x}_1 \mpplus \cdots \mpplus a_n \vect{x}_n \sgreater b_0 \mpplus b_1 \vect{x}_1 \mpplus \cdots \mpplus b_n \vect{x}_n \ . \label{eq:complement}
\end{equation}
Let $\vect{x}, \vect{y} \in \maxplus^n$ be in this complement, and $\lambda, \mu \in \maxplus$ be such that $\lambda \mpplus \mu = 0$. Without loss of generality, assume that $\lambda = 0$. If $\mu = -\infty$, then $\lambda \vect{x} \mpplus \mu \vect{y} = \vect{x}$ trivially satisfies~\eqref{eq:complement}. Otherwise, \ie\ if $\mu \in \R$ and $\mu \leq 0$, then by Property~\eqref{lemma:smaxplus:P4} of Lemma~\ref{lemma:smaxplus} we have $\mu (a_0 \mpplus a_1 \vect{y}_1 \mpplus \cdots \mpplus a_n \vect{y}_n) \sgreater \mu (b_0 \mpplus b_1 \vect{y}_1 \mpplus \cdots \mpplus b_n \vect{y}_n)$. It readily follows that 
$a_0 \mpplus a_1 ( \lambda \vect{x}_1 \mpplus \mu \vect{y}_1 ) \mpplus \cdots \mpplus a_n ( \lambda \vect{x}_n \mpplus \mu \vect{y}_n ) \sgreater b_0 \mpplus b_1 ( \lambda \vect{x}_1 \mpplus \mu \vect{y}_1 ) \mpplus \cdots \mpplus b_n ( \lambda \vect{x}_n \mpplus \mu \vect{y}_n )$. 
This shows that the complement of a mixed half-space is tropically convex.
\end{proof}

\begin{example}\label{remark:hemispace}
Consider the tropical hemispace $\HH$ defined as the set of vectors $\vect{x} \in \maxplus^4$ such that
\begin{align*}
 (\vect{x}_3 \leq \vect{x}_1 \ \text{and} \ \vect{x}_4 \leq \vect{x}_1) \quad \text{or} \quad  (\vect{x}_3 \leq \vect{x}_2 \ \text{and} \ \vect{x}_4 < \vect{x}_2) \; .
\end{align*}
We claim that $\HH$ is not a mixed half-space. Its closure is the tropical half-space defined by the inequality 
\begin{equation}
\vect{x}_3 \mpplus \vect{x}_4 \leq 0 \vect{x}_1 \mpplus 0 \vect{x}_2 \ . \label{eq:closed_hs}
\end{equation}
Any mixed half-space whose closure is given by~\eqref{eq:closed_hs} is defined by a mixed inequality obtained from~\eqref{eq:closed_hs} by replacing some of the coefficients $0$ on the right-hand side by $\myul{0}$. However, it can be easily verified that none of these mixed half-spaces is equal to $\HH$. 

Similarly, it can be checked that the complement of $\HH$, which is given by the set of vectors $\vect{x} \in \maxplus^4$ such that
\[
\vect{x}_1 < \vect{x}_3 \mpplus \vect{x}_4 \quad \text{and} \quad (\vect{x}_2 < \vect{x}_3 \ \text{or}\ \vect{x}_2 \leq \vect{x}_4) \ ,
\]
is not a mixed half-space either.
\end{example}

To prove the following proposition, we shall use the characterization of tropical hemispaces in terms of $(P,R)$-decompositions established in~\cite{KNS}. 

\begin{proposition}\label{prop:hemispace0}
Tropical hemispaces are tropical polyhedra with mixed constraints.
\end{proposition}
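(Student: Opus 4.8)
The plan is to derive the proposition from Theorem~\ref{th:union_of_zones}. A tropical hemispace $\HH$ is tropically convex by definition, so it suffices to exhibit $\HH$ as a union of finitely many zones; Theorem~\ref{th:union_of_zones} then immediately yields that $\HH$ is a tropical polyhedron with mixed constraints. The whole argument thus reduces to the geometric fact that every tropical hemispace is a finite union of zones, and for this I would rely on the description of tropical hemispaces by $(P,R)$-decompositions established in~\cite{KNS}. (The trivial hemispaces $\emptyset$ and $\maxplus^n$ are already zones --- the former defined e.g.\ by $\mpone \sleq \vect{x}_1$ together with $\vect{x}_1 \sleq \myul{0}$, the latter by an empty system of constraints; and if the characterization of~\cite{KNS} is phrased projectively one first homogenizes, adjoining a coordinate $\vect{x}_0$ to carry the constant terms of~\eqref{eq:mixed_affine_inequality}, and dehomogenizes at the end, steps which preserve the property of being a finite union of zones.)

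The core step is to read off from a $(P,R)$-decomposition of $\HH$ a presentation as a finite union of zones. For a \emph{closed} tropical half-space $\bigoplus_{i \in P} a_i \vect{x}_i \leq \bigoplus_{j \in R} b_j \vect{x}_j$ (with $P \cup R = [n]$, $P \cap R = \emptyset$) this is elementary: by the disjunctive nature of tropical inequalities, $\vect{x}$ satisfies it if, and only if, for every $i \in P$ there is $j \in R$ with $a_i \vect{x}_i \leq b_j \vect{x}_j$; hence the half-space is the union, over all maps $f \colon P \to R$, of the zones $\bigcap_{i \in P} \{\, \vect{x} \mid a_i \vect{x}_i \leq b_{f(i)} \vect{x}_{f(i)} \,\}$, where a constant coefficient on either side is turned into a bound constraint $m_i \lhd \vect{x}_i$ or $\vect{x}_i \lhd M_i$ using Property~\eqref{lemma:smaxplus:P5} of Lemma~\ref{lemma:smaxplus} (exactly as in the proof of Theorem~\ref{th:union_of_zones}). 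A general hemispace is obtained from its closure, a closed half-space, by refining it according to a priority order on the indices and prescribing which of the induced comparisons are strict --- this is the content of the $(P,R)$-decomposition, and it is precisely the mechanism behind $\HH$ in Example~\ref{remark:hemispace}, whose closure is $\vect{x}_3 \mpplus \vect{x}_4 \leq 0 \vect{x}_1 \mpplus 0 \vect{x}_2$ and which equals the union of the two zones $\{\, \vect{x}_3 \leq \vect{x}_1,\ \vect{x}_4 \leq \vect{x}_1 \,\}$ and $\{\, \vect{x}_3 \leq \vect{x}_2,\ \vect{x}_4 < \vect{x}_2 \,\}$. I would run the same case analysis on which right-hand term the priority order designates as dominant, now additionally recording for each comparison whether it is $\leq$ or $<$ as prescribed by the decomposition; since a $(P,R)$-decomposition is a finite object, only finitely many zones arise, their union is $\HH$, and Theorem~\ref{th:union_of_zones} applies because $\HH$ is tropically convex.

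The step I expect to be the main obstacle is this translation. One has to match the combinatorial data of the $(P,R)$-decomposition --- the priority order among the indices and the exact placement of the strict comparisons --- with the choices $\lhd \in \{\leq, <\}$ in each zone, and to treat the degenerate coefficients correctly: a constant term, or a right-hand coefficient with $+\infty$-like behaviour, must be realized as a bound constraint rather than as a difference constraint $\vect{x}_i \lhd k_{ij} \vect{x}_j$. Getting this bookkeeping exactly right is what guarantees that each piece is a genuine zone in the sense of~\eqref{eq:proof_aux} and that the union is exactly $\HH$, and not merely its closure.
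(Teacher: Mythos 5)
Your overall strategy is legitimate and genuinely different from the paper's: you reduce the statement to showing that a hemispace is a finite union of zones and then invoke Theorem~\ref{th:union_of_zones}, whereas the paper never passes through zones at all. The paper takes the $(P,R)$-decomposition of~\cite{KNS}, which writes a hemispace as $\tconv(P)\mpplus\tcone(R)$ for generators parametrized by intervals $\Lambda^1_{ij}$, encodes membership as the existence of auxiliary scalars $\mu_{ij},\nu_{ij}$ subject to mixed inequalities (each interval constraint $\lambda_{ij}\in\Lambda^1_{ij}$ becoming a single inequality $\nu_{ij}\sleq\beta\mu_{ij}$ with the coefficient drawn from $\{\beta,\myul{\alpha},+\infty\}$ according to the form of the interval), and then eliminates the auxiliary variables with Theorem~\ref{th:fourier_motzkin_maxplus_case}. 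That route delivers the mixed-inequality description directly and sidesteps the combinatorics you are worried about.

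The gap in your version is precisely the step you yourself flag as ``the main obstacle'': you never actually derive the finite union of zones from the $(P,R)$-decomposition, and your guiding picture of that decomposition is not what~\cite{KNS} provides. The decomposition is a \emph{generating} representation, not ``the closure refined by a priority order with prescribed strictness'': membership of $\vect{x}$ is expressed through max-decompositions $\vect{x}_i=\mpplus_{j\in J}\mu_{ij}$ and $\vect{x}_j=\mpplus_{i}\lambda_{ij}\mu_{ij}$ with the $\lambda_{ij}$ ranging over intervals, so extracting zones would require choosing, for every coordinate, which generator attains the maximum and then propagating the interval bounds through those choices --- in effect redoing the Fourier-Motzkin elimination by hand. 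Example~\ref{remark:hemispace} already shows the bookkeeping is delicate: the strictness pattern there depends on which right-hand index dominates, so the hemispace is not obtained from its closed half-space closure by one global strictness assignment, and one must also verify that the union over all choices does not overshoot $\HH$. Until that translation is written out in full (or replaced by the elimination argument above), the proof is incomplete at its central step; the surrounding scaffolding --- tropical convexity of $\HH$, the appeal to Theorem~\ref{th:union_of_zones}, the treatment of constant and degenerate coefficients via Lemma~\ref{lemma:smaxplus} --- is fine.
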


\begin{proof}
In the first place, let us recall that the \emph{tropical conic hull} of a subset $G$ of $\maxplus^n$ is defined as
\[
\tcone(G) \mydef \{ \mu_1 \vect{x}^1 \mpplus \cdots \mpplus \mu_m \vect{x}^m \mid m \in \mathbb{N}, \ \vect{x}^i \in G, \ \mu_i \in \maxplus \} \; .
\]
Moreover, given two subsets $G$ and $G'$ of $\maxplus^n$, their tropical Minkowski sum $G \mpplus G'$ is defined as $\{ \vect{x} \mpplus \vect{x}' \mid \vect{x} \in G, \ \vect{x}' \in G' \}$. 
In this proof, we denote by $\vect{u}^i \in \maxplus^n$, for $i\in \oneto{n}$, the vector defined by $\vect{u}^i_j \mydef 0$ if $j = i$ and $\vect{u}^i_j \mydef -\infty$ otherwise.

Let $\HH_1, \HH_2$ be two complementary hemispaces. Suppose that the vector all of whose entries are equal to $-\infty$ belongs to $\HH_1$. Then, by~\cite[Theorem~4.22]{KNS} there exist a partition $I, J$ of $\oneto{n}$ and interval sets $\Lambda^1_{ij}, \Lambda^2_{ij} \subset \cmaxplus$ for $i\in I \cup \{0\}$ and $j \in J$ such that
\begin{align*}
\HH_1 & = \tconv(\{\lambda \vect{u}^j \mid j \in J, \lambda \in \Lambda_{0j}^1\}) \mpplus \tcone(\{\vect{u}^i \mpplus \lambda \vect{u}^j \mid i \in I, \ j \in J, \ \lambda \in \Lambda_{ij}^1 \})  \notag \\
\HH_2 & = \tconv(\{\lambda \vect{u}^j \mid j \in J, \lambda \neq +\infty, \ \lambda \in \Lambda_{0j}^2\}) \mpplus \tcone(\{ \lambda \vect{u}^i \mpplus \vect{u}^j \mid i \in I, \ j \in J, \ \lambda \in -\Lambda_{ij}^2 \}) 
\end{align*}
where each couple of intervals $\Lambda^1_{ij}, \Lambda^2_{ij}$ has one of the following forms:
\begin{equation}
(\Lambda^1_{ij}, \Lambda^2_{ij}) = 
\begin{cases}
([-\infty, \beta], ]\beta, +\infty]), & \beta \in \R \cup \{-\infty\} \\
([-\infty, \alpha[, [\alpha, +\infty]), & \alpha \in \R \cup \{+\infty\}
\end{cases} \label{eq:proof_hemispace2}
\end{equation}
Thus, note that $\vect{x} \in \HH_1$ if, and only if, there exist $\lambda_{ij}$ and $\mu_{ij}$ in $\maxplus$ for $i\in I \cup \{0\}$ and $j \in J$ such that 
\begin{align*}
\vect{x}_i & = \mpplus_{j \in J} \mu_{ij} && \text{for} \; i \in I \\
\vect{x}_j & = \mpplus_{i \in I \cup \{0\}} \lambda_{ij} \mu_{ij} && \text{for} \; j \in J \\
\mpplus_{j \in J} \mu_{0j} & = 0 \\
\lambda_{ij}&  \in \Lambda^1_{ij} && \text{for} \; i \in I \cup \{0\} \; \text{and} \; j \in J 
\end{align*}
Equivalently, $\vect{x} \in \HH_1$ if, and only if, there exist $\mu_{ij}$ and $\nu_{ij}$ in $\maxplus$ for $i\in I \cup \{0\}$ and $j \in J$ such that
\begin{equation}
\begin{aligned}
\vect{x}_i & = \mpplus_{j \in J} \mu_{ij} && \text{for} \; i \in I \\
\vect{x}_j & = \mpplus_{i \in I \cup \{0\}} \nu_{ij} && \text{for} \; j \in J \\
\mpplus_{j \in J} \mu_{0j} & = 0 \\
\nu_{ij} &  \sleq 
\begin{cases}
\beta \mu_{ij} & \text{if} \ \Lambda^1_{ij} = [-\infty,\beta] \ \text{with} \ \beta \in \R \cup \{-\infty\}\\
\myul{\alpha} \mu_{ij} & \text{if} \ \Lambda^1_{ij} = [-\infty, \alpha[ \ \text{with} \ \alpha \in \R \\
(+\infty) \mu_{ij} & \text{if} \ \Lambda^1_{ij} = [-\infty, \alpha[ \ \text{with} \ \alpha = +\infty  
\end{cases} && \text{for} \; i \in I \cup \{0\} \; \text{and} \;  j \in J
\end{aligned}\label{eq:proof_aux3}
\end{equation}
By Theorem~\ref{th:fourier_motzkin_maxplus_case}, we conclude that $\HH_1$ is a tropical polyhedron with mixed constraints, since it is defined by the system of mixed inequalities obtained by eliminating $\mu_{ij}$ and $\nu_{ij}$  for $i \in I \cup \{0\}$ and $j \in J$ in~\eqref{eq:proof_aux3}. 

The same result can be obtained for $\HH_2$ using a symmetric argument.
\end{proof}

The following result is a straightforward consequence of Propositions~\ref{prop:hemispace} and~\ref{prop:hemispace0}.
\begin{corollary}\label{cor:intersection_of_hemispaces}
Tropical polyhedra with mixed constraints are precisely the intersections of finitely many tropical hemispaces. 
\end{corollary}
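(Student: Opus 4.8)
The plan is to derive Corollary~\ref{cor:intersection_of_hemispaces} directly from the two preceding propositions by a standard ``sandwich'' argument, using Proposition~\ref{prop:hemispace} for one inclusion and Proposition~\ref{prop:hemispace0} for the other, together with the fact that tropical polyhedra with mixed constraints are closed under finite intersection (which is immediate from the definition: concatenating the defining systems of finitely many polyhedra with mixed constraints yields again a finite system of mixed inequalities). Thus there are no serious obstacles; the only point requiring a word of care is that a single tropical hemispace need not itself be a \emph{half}-space, so one cannot simply invoke Proposition~\ref{prop:hemispace} verbatim on each factor.

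Concretely, I would argue as follows. Let $\PP \subset \maxplus^n$ be a tropical polyhedron with mixed constraints, defined by finitely many mixed inequalities of the form~\eqref{eq:mixed_affine_inequality}. For each such inequality, the set of its solutions in $\maxplus^n$ is, by definition, either all of $\maxplus^n$, or the empty set, or a mixed half-space; in the first two degenerate cases it is trivially an intersection of (zero, respectively one suitably chosen) tropical hemispaces, and in the remaining case it is a tropical hemispace by Proposition~\ref{prop:hemispace}. Hence $\PP$, being the intersection of these finitely many sets, is an intersection of finitely many tropical hemispaces.

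For the converse inclusion, let $\HH_1, \dots, \HH_m$ be tropical hemispaces and put $\PP \defi \bigcap_{k=1}^m \HH_k$. By Proposition~\ref{prop:hemispace0}, each $\HH_k$ is a tropical polyhedron with mixed constraints, say defined by a finite system $S_k$ of mixed inequalities. Then the union $S \defi S_1 \cup \cdots \cup S_m$ is again a finite system of mixed inequalities, and a vector $\vect{x} \in \maxplus^n$ satisfies $S$ if, and only if, it satisfies each $S_k$, i.e.\ if, and only if, $\vect{x} \in \HH_k$ for all $k$, i.e.\ $\vect{x} \in \PP$. Therefore $\PP$ is the solution set of a finite system of mixed inequalities, hence a tropical polyhedron with mixed constraints. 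Combining the two directions yields the claimed equality of classes. The main (and essentially only) subtlety, to be handled with a single sentence as above, is the bookkeeping for degenerate inequalities whose solution set is empty or the whole space, so that one stays within the literal definition of ``mixed half-space'' when applying Proposition~\ref{prop:hemispace}.
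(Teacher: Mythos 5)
Your proof is correct and follows exactly the route the paper intends: the paper gives no explicit argument, merely noting the corollary is a straightforward consequence of Propositions~\ref{prop:hemispace} and~\ref{prop:hemispace0}, and your write-up is precisely that consequence spelled out (with sensible handling of the degenerate inequalities whose solution set is empty or all of $\maxplus^n$, both of which are themselves trivially hemispaces).
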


\begin{remark}
We have previously defined a closed tropical polyhedron as the solution set of inequalities of the form~\eqref{eq:affine_inequality}, \ie \ mixed inequalities with coefficients in $\maxplus$. We point out that this definition is consistent with the fact that such polyhedra are precisely the tropical polyhedra with mixed constraints which are closed. Indeed, by Theorem~\ref{th:union_of_zones} a tropical polyhedron with mixed constraints $\PP$ can be written as a finite union of zones. If $\PP$ is closed, then it is equal to the union of the closure of these zones (\ie\ closed zones, which can be generated by finitely many points and rays by the tropical Minkowski-Weyl Theorem, see~\cite[Theorem~2]{GK09}). Since $\PP$ is tropically convex, it is even equal to the tropical convex hull of this union. It follows that it is generated by finitely many points and rays. By the tropical Minkowski-Weyl Theorem, we deduce that it is the solution set of finitely many inequalities of the form~\eqref{eq:affine_inequality}.
\end{remark}

\section{Eliminating redundant mixed inequalities}\label{subsec:elimination_step}

Like in the classical setting, 
Fourier-Motzkin
elimination generates a system of $O(p^2)$ inequalities from an input
with $p$ constraints. Consequently, the number of inequalities may grow 
in $O(p^{2^k})$ after $k$ successive applications. 
To avoid the explosion of the size of the constraint system, superfluous
inequalities must be eliminated. With this aim, 
we present a decision procedure for implications of the form:
\begin{equation}\label{eq:implication}
A \vect{x} \mpplus \vect{c} \sleq B \vect{x} \mpplus \vect{d} \implies \vect{e} \vect{x} \mpplus g \sleq \vect{f} \vect{x} \mpplus h 
\quad \ \text{for all} \ \vect{x}\in \maxplus^n \; ,
\end{equation}
where $A \in \maxplus^{p \times n}$, $B \in \smaxplus^{p \times n}$, 
$\vect{c} \in \maxplus^p$, $\vect{d} \in \smaxplus^p$, $g \in \maxplus$, $h \in \smaxplus$, and $\vect{e}$ and $\vect{f}$ are $n$-dimensional row vectors with entries in $\maxplus$ and $\smaxplus$ respectively. 
We assume that $\vect{e}$ and $g$ are not identically null (in the tropical sense) and that $h\neq +\infty$, because otherwise deciding implication~\eqref{eq:implication} is trivial. 

\subsection{Equivalence with mean payoff games}\label{Subsection:EquivMPG}

We first show that deciding an implication of the form~\eqref{eq:implication} is equivalent to an emptiness problem for tropical polyhedra with mixed constraints. 

\begin{proposition}\label{prop:EquivImplEmpt}
Let $\QQ$ be the tropical polyhedron with mixed constraints 
defined by the system $A \vect{x} \mpplus \vect{c} \sleq B \vect{x} \mpplus \vect{d}$ and the following inequalities:
\begin{equation}\label{P1ofprop:EquivImplEmpt}
\begin{cases}
\vect{f}_i \vect{x}_i \sleq (\myul{0} \vect{e}) \vect{x} \mpplus \myul{0} g & \text{if} \ \vect{f}_i \in \maxplus \\
\abs{\vect{f}_i} \vect{x}_i \sleq \vect{e} \vect{x} \mpplus g & \text{if} \ \vect{f}_i \in \pert \\
\vect{x}_i \sleq \mpzero & \text{if} \ \vect{f}_i = +\infty
\end{cases}
\end{equation}
for all $i \in \oneto{n}$,
\begin{equation}\label{P2ofprop:EquivImplEmpt}
\begin{cases}
h \sleq (\myul{0} \vect{e}) \vect{x} \mpplus \myul{0} g  &\text{if} \ h\in \maxplus \\
\abs{h} \sleq \vect{e} \vect{x} \mpplus g  &\text{if} \ h\in \pert 
\end{cases}
\end{equation}
and 
\[
0 \sleq \mpplus_{\vect{e}_i \neq -\infty} (+\infty) \vect{x}_i \quad \text{if} \ g = -\infty .  
\]
Then, implication~\eqref{eq:implication} holds if, and only if, $\QQ$ is empty.
\end{proposition}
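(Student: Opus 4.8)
The plan is to prove that $\QQ$ is exactly the set of vectors $\vect x\in\maxplus^n$ which witness the \emph{failure} of~\eqref{eq:implication}, namely the vectors satisfying the premise $A\vect x\mpplus\vect c\sleq B\vect x\mpplus\vect d$ together with the negation of the conclusion. Since $\sleq$ is a total order, the negation of $\vect e\vect x\mpplus g\sleq\vect f\vect x\mpplus h$ is precisely $\vect e\vect x\mpplus g\sgreater\vect f\vect x\mpplus h$. Hence, once the characterization of $\QQ$ is established, implication~\eqref{eq:implication} holds if and only if no such witness exists, i.e. if and only if $\QQ=\emptyset$, which is the statement. (Along the way one checks that the listed constraints are genuine mixed inequalities in the sense of~\eqref{eq:mixed_affine_inequality}: their left-hand coefficients lie in $\maxplus$ and their right-hand ones in $\smaxplus$, since $\myul{0}\vect e$ has entries in $\pert\cup\{\mpzero\}$ and $\abs{\vect f_i},\abs{h}\in\maxplus$.)

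For the characterization, I would fix $\vect x\in\maxplus^n$ satisfying the premise and set $L\defi\vect e\vect x\mpplus g\in\maxplus$. The negated conclusion $L\sgreater\vect f\vect x\mpplus h$ decomposes: since $\vect f\vect x\mpplus h$ is the $\sleq$-maximum of the finite family $\{h\}\cup\{\vect f_i\vect x_i\mid i\in[n]\}$ and $\sleq$ is total, $L\sgreater\vect f\vect x\mpplus h$ is equivalent to $L\sgreater h$ together with $L\sgreater\vect f_i\vect x_i$ for all $i\in[n]$. Each of these strict inequalities forces $L\neq\mpzero$, since $\mpzero$ is the $\sleq$-least element of $\smaxplus$; hence the negated conclusion entails $L\in\R$, and it suffices to characterize that situation (the case distinction on $h$ uses the standing assumption $h\neq+\infty$).

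Next I would translate each elementary strict inequality into the corresponding mixed inequality among~\eqref{P1ofprop:EquivImplEmpt}--\eqref{P2ofprop:EquivImplEmpt}, working under $L\in\R$. By distributivity and associativity in $\smaxplus$ one has $(\myul{0}\vect e)\vect x\mpplus\myul{0}g=\myul{0}L$, and then Property~\eqref{lemma:smaxplus:P5} of Lemma~\ref{lemma:smaxplus} gives, for $w\in\maxplus$ and $L\in\R$, the equivalence $w<L\iff w\sleq\myul{0}L$ (checking separately the degenerate value $w=\mpzero$, where both sides hold because $L\neq\mpzero$); this settles the cases $\vect f_i\in\maxplus$ (take $w=\vect f_i\vect x_i$) and $h\in\maxplus$. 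When $\vect f_i\in\pert$, a direct computation from the definitions of $\sleq$ and of the product in $\smaxplus$ shows that $\vect f_i\vect x_i\sless L$ is equivalent to $\abs{\vect f_i}\vect x_i\sleq L$ (distinguishing $\vect x_i=\mpzero$ from $\vect x_i\in\R$), and likewise $h\sless L\iff\abs{h}\sleq L$ when $h\in\pert$. When $\vect f_i=+\infty$, since $+\infty$ is the $\sleq$-greatest element and $L\in\R$, $\vect f_i\vect x_i\sless L$ holds exactly when $\vect x_i=\mpzero$, i.e. when $\vect x_i\sleq\mpzero$. Finally, the constraint $L\in\R$ is automatic when $g\in\R$ (then $L\sgeq g$), and when $g=\mpzero$ it says $\vect e\vect x\neq\mpzero$, which — since $(+\infty)\vect x_i$ equals $+\infty$ if $\vect x_i\neq\mpzero$ and $\mpzero$ otherwise — is exactly the last listed inequality $0\sleq\mpplus_{\vect e_i\neq\mpzero}(+\infty)\vect x_i$. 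Collecting these equivalences shows that $\vect x$ satisfies~\eqref{P1ofprop:EquivImplEmpt},~\eqref{P2ofprop:EquivImplEmpt} and the last inequality if and only if $L\in\R$, $L\sgreater h$ and $L\sgreater\vect f_i\vect x_i$ for all $i$, i.e. if and only if $\vect e\vect x\mpplus g\sgreater\vect f\vect x\mpplus h$; together with the premise this yields $\vect x\in\QQ$ iff $\vect x$ witnesses the failure of~\eqref{eq:implication}.

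The main obstacle, and the only genuinely delicate point, is the bookkeeping around the value $L=\mpzero$: the mixed inequalities of the form $w\sleq\myul{0}L$ are strictly weaker than the strict inequalities $w<L$ exactly in the degenerate situation $w=L=\mpzero$, so one must verify both that no witness of the negated conclusion can have $L=\mpzero$ and that the extra inequality included in the definition of $\QQ$ precisely when $g=\mpzero$ rules this case out. Everything else is a routine (if slightly tedious) case analysis according to whether each relevant element of $\smaxplus$ lies in $\R$, lies in $\pert$, equals $\mpzero$, or equals $+\infty$.
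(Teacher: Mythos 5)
Your proposal is correct and follows essentially the same route as the paper: negate the implication to get a witness $\vect x$ of the premise together with $\vect e\vect x\mpplus g\sgreater\vect f\vect x\mpplus h$, decompose this strict inequality termwise, observe that it forces $\vect e\vect x\mpplus g>\mpzero$ (which, when $g=\mpzero$, is exactly the last listed inequality), and then translate each termwise strict inequality into the corresponding mixed inequality of \eqref{P1ofprop:EquivImplEmpt}--\eqref{P2ofprop:EquivImplEmpt} via Lemma~\ref{lemma:smaxplus}\eqref{lemma:smaxplus:P5} and the definition of the order on $\smaxplus$. Your write-up merely spells out the case analysis (in particular around the degenerate value $\mpzero$) that the paper leaves implicit.
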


\begin{proof}
Implication~\eqref{eq:implication} is false if, and only if, there exists $\vect{x} \in \maxplus^n$ such that $A \vect{x} \mpplus \vect{c} \sleq B \vect{x} \mpplus \vect{d}$ and $\vect{e} \vect{x} \mpplus g \sgreater \vect{f} \vect{x} \mpplus h $. 
Observe that $\vect{e} \vect{x} \mpplus g \sgreater \vect{f} \vect{x} \mpplus h $ holds if, and only if, $\vect{e} \vect{x} \mpplus g \sgreater h $ and $\vect{e} \vect{x} \mpplus g \sgreater \vect{f}_i \vect{x}_i$ for all $i \in \oneto{n}$. 
This implies $\vect{e} \vect{x} \mpplus g > \mpzero$, \ie \  $0 \sleq (+\infty) (\vect{e} \vect{x} \mpplus g)$. The latter inequality is trivially satisfied if $g \neq -\infty$. When $g = -\infty$, it is equivalent to $0 \sleq \mpplus_{\vect{e}_i \neq -\infty} (+\infty) \vect{x}_i$. 
Finally, assuming $\vect{e} \vect{x} \mpplus g > \mpzero$, note that $\vect{e} \vect{x} \mpplus g \sgreater \vect{f}_i \vect{x}_i$ 
is equivalent to~\eqref{P1ofprop:EquivImplEmpt}, and $\vect{e} \vect{x} \mpplus g \sgreater h $ is equivalent to~\eqref{P2ofprop:EquivImplEmpt}.  
This completes the proof.
\end{proof}

In light of Proposition~\ref{prop:EquivImplEmpt}, it is enough to develop a decision procedure to determine whether a
tropical polyhedron with mixed constraints is empty.
Our approach relies on \emph{parametric mean payoff games}, following
the lines of~\cite{AkianGaubertGutermanIJAC2011,AllamigeonGaubertKatzLAA2011}. 

Let $\mcR$ be a tropical polyhedron with mixed constraints defined by the system 
\begin{equation}
M \vect{x} \mpplus \vect{p} \sleq N \vect{x} \mpplus \vect{q} \;, \label{eq:r_system}
\end{equation}
where $M \in \maxplus^{r \times n}$, $N \in \smaxplus^{r \times n}$,
$\vect{p} \in \maxplus^r$ and $\vect{q} \in \smaxplus^r$. 
It is convenient to consider two different cases, 
depending on whether $+\infty$ coefficients appear or not in~\eqref{eq:r_system}. 

\subsubsection{Polyhedra defined by systems with no $+\infty$ coefficients.}

In the first place, we restrict to the following case: 

\begin{assumption}\label{ass:non_infty}
No coefficient is equal to $+\infty$ on the right-hand side of mixed inequalities.
\end{assumption}
In other words, in this subsection we assume $N \in (\smaxplus \setminus \{+\infty\})^{r \times n}$ and $\vect{q} \in (\smaxplus \setminus \{+\infty\})^r$. 

In this case, with each $\eps \geq 0$ we associate a closed tropical polyhedron $\mcR_\eps$, given by the system
$M \vect{x} \mpplus \vect{p} \sleq N(\eps) \vect{x} \mpplus
\vect{q}(\eps)$, and a mean payoff game involving two players, ``Max''
and ``Min'', playing on a weighted bipartite digraph $\GG_\eps$ composed
of two kinds of nodes: \emph{row nodes} $i \in [r]$, and
\emph{column nodes} $j \in [n+1]$. 
This digraph contains an arc
from row node $i$ to column node $j$ with weight $N_{ij}(\eps)$ when
$N_{ij} \neq \mpzero$, and an arc from $j$ to $i$ with weight
$-M_{ij}$ when $M_{ij} \neq \mpzero$.  Similarly, it contains an arc
from row node $i$ to column node $n+1$ with weight $\vect{q}_i(\eps)$ if
$\vect{q}_i \neq \mpzero$, and an arc from column node $n+1$ to
row node $i$ with weight $-\vect{p}_i$ when $\vect{p}_i \neq \mpzero$.
\begin{example} 
Figure~\ref{fig:mean_payoff_game} provides an illustration of the
digraph $\GG_\eps$ corresponding to the tropical polyhedron with mixed constraints defined by the system:
\[
\begin{aligned}
1: && -3 & \sleq \vect{x}_1 \\
2: && 0 & \sleq 1\vect{x}_1 \mpplus \myul{0}\vect{x}_2 
\end{aligned}
\quad \quad
\begin{aligned}
3: && -2 & \sleq \vect{x}_2 \\
4: && (-2)\vect{x}_1 & \sleq \myul{0} \mpplus (-1) \vect{x}_2 
\end{aligned}
\]

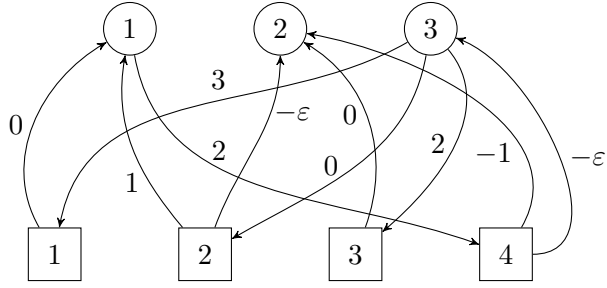
\begin{figure}[tbp]
\begin{center}
\begin{tikzpicture}[scale=2,rownode/.style={rectangle,draw=black,minimum size=0.7cm},colnode/.style={circle,draw=black,minimum size=0.7cm},>=stealth']

\node[rownode] (row1) at (0,0) {$1$};
\node[rownode] (row2) at (1,0) {$2$};
\node[rownode] (row3) at (2,0) {$3$};
\node[rownode] (row4) at (3,0) {$4$};

\node[colnode] (col1) at (0.5,1.5) {$1$};
\node[colnode] (col2) at (1.5,1.5) {$2$};
\node[colnode] (col3) at (2.5,1.5) {$3$};

\draw (row1) edge[->,out=120,in=210] node[left] {$0$} (col1) ;
\draw (col3) edge[->,out=210,in=80] node[above right=0.1cm] {$3$} (row1);

\draw (row2) edge[->,out=130,in=-100] node[below=0.22cm] {$1$} (col1) ;
\draw (row2) edge[->,out=70,in=-90] node[above=0.4cm,right=0.1cm] {$-\eps$} (col2) ;
\draw (col3) edge[->,out=-100,in=30] node[below,left=0.1cm] {$0$} (row2);

\draw (row3) edge[->,out=70,in=-30] node[above=0.1cm,left=0.02cm] {$0$} (col2) ;
\draw (col3) edge[->,out=-50,in=40] node[left] {$2$} (row3);

\draw (row4) edge[->,out=0,in=-20] node[right] {$-\eps$} (col3) ;
\draw (row4) edge[->,out=60,in=-10] node[below=0.7cm,left=-0.6cm] {$-1$} (col2) ;
\draw (col1) edge[->,out=-80,in=160] node[above=0.48cm,left=0.35cm] {$2$} (row4);
\end{tikzpicture}
\end{center}
\caption{The digraph associated with a parametric mean payoff game (column and row nodes are respectively represented by circles and squares).}\label{fig:mean_payoff_game}
\end{figure}
\end{example}

The principle of the game is the following. Players Min and Max
alternatively move a pawn over the nodes of $\GG_\eps$. When it is
placed on a column node, Player Min selects an outgoing arc, moves the pawn to
the corresponding row node, and pays to Player Max the weight of the selected arc. Once
the pawn is on a row node, Player Max similarly selects an outgoing arc, moves
the pawn along it, and receives from Player Min a payment equal to the weight
of the selected arc.

In the sequel, we suppose that Players Max and Min always have at least one possible action in each node: 
\begin{assumption}\label{AssumpGame}
Each node of $\GG_\eps$ has at least one outgoing arc. 
\end{assumption}
This technical property can be assumed without loss of generality, up to adding trivial inequalities or removing non-relevant unknowns in the system. We refer to the discussion of Assumptions~2.1 and 2.2 in~\cite{AkianGaubertGutermanIJAC2011} for further details.
 
We consider infinite runs of the game, in which case the payoff is
defined as the mean of the payments of Player Min to Player Max. Player Min wants to minimize this mean of payments while Player Max wants to maximize it. We denote by
$v(\eps)$ the value of the game associated with $\GG_\eps$ when it
starts at column node $n+1$. It is shown
in~\cite[Theorem~3.5]{AkianGaubertGutermanIJAC2011} that the tropical
polyhedron $\mcR_\eps$ is non-empty if, and only
if, 
$v(\eps) \geq 0$, \ie{} column node $n+1$ is a winning initial node (for
Player Max). Then, the following result immediately follows from Lemma~\ref{lemma:val}.

\begin{proposition}\label{prop:emptiness_chi}
The tropical polyhedron with mixed constraints $\mcR$ is non-empty if, and only if, there exists $\eps > 0$ such that $v(\eps) \geq 0$.
\end{proposition}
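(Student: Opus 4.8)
The plan is to combine Lemma~\ref{lemma:val} with the mean payoff game characterization of emptiness for \emph{closed} tropical polyhedra from~\cite[Theorem~3.5]{AkianGaubertGutermanIJAC2011}.

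First I would rephrase the condition ``$\mcR \neq \emptyset$'' in terms of the approximating closed polyhedra $\mcR_\eps$. Applying Lemma~\ref{lemma:val} to each of the finitely many inequalities of the system~\eqref{eq:r_system}, a vector $\vect{x} \in \maxplus^n$ belongs to $\mcR$ if and only if, for every row index $i$, there is some $\eps_i > 0$ such that the $i$-th valued inequality $M_i \vect{x} \mpplus \vect{p}_i \leq N_i(\eps_i) \vect{x} \mpplus \vect{q}_i(\eps_i)$ holds. Since the valuation maps $\eps \mapsto N_{ij}(\eps)$ and $\eps \mapsto \vect{q}_i(\eps)$ are non-increasing (immediate from the definition of the valuation), each valued inequality that holds at $\eps_i$ still holds for every $\eps$ with $0 < \eps \leq \eps_i$. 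Taking $\eps \defi \min_i \eps_i > 0$, we conclude that $\vect{x} \in \mcR$ if and only if $\vect{x} \in \mcR_\eps$ for some $\eps > 0$; equivalently, $\mcR = \bigcup_{\eps > 0} \mcR_\eps$, an increasing union as $\eps \to 0^+$. In particular, $\mcR$ is non-empty if and only if $\mcR_\eps$ is non-empty for some $\eps > 0$.

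Then I would invoke the game-theoretic result. Under Assumption~\ref{ass:non_infty}, for every $\eps > 0$ all the coefficients $N_{ij}(\eps)$ and $\vect{q}_i(\eps)$ lie in $\maxplus$, so $\mcR_\eps$ is a genuine closed tropical polyhedron, and (using also Assumption~\ref{AssumpGame}) the digraph $\GG_\eps$ is exactly the weighted bipartite digraph attached to the system $M \vect{x} \mpplus \vect{p} \leq N(\eps) \vect{x} \mpplus \vect{q}(\eps)$ in the sense of~\cite{AkianGaubertGutermanIJAC2011}. By~\cite[Theorem~3.5]{AkianGaubertGutermanIJAC2011}, $\mcR_\eps \neq \emptyset$ if and only if the value $v(\eps)$ of the mean payoff game on $\GG_\eps$ started at the column node $n+1$ satisfies $v(\eps) \geq 0$. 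Combining this equivalence with the previous paragraph gives the statement.

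There is no real obstacle here: the argument is a quantifier manipulation resting on two ingredients that are already available. The only points deserving attention are the reduction from a per-inequality perturbation parameter $\eps_i$ to a single $\eps$, which uses the monotonicity of the valuation, and the verification that $\mcR_\eps$ and $\GG_\eps$ fall within the scope of~\cite[Theorem~3.5]{AkianGaubertGutermanIJAC2011}, which is precisely the role of Assumptions~\ref{ass:non_infty} and~\ref{AssumpGame}.
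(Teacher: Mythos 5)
Your proposal is correct and follows essentially the same route as the paper: the paper's proof likewise consists of citing \cite[Theorem~3.5]{AkianGaubertGutermanIJAC2011} for the equivalence $\mcR_\eps \neq \emptyset \iff v(\eps) \geq 0$ and then invoking Lemma~\ref{lemma:val} to pass from $\mcR$ to the family $(\mcR_\eps)_{\eps>0}$. The only difference is that you spell out the reduction from per-inequality parameters $\eps_i$ to a single $\eps$ via monotonicity of the valuation, a detail the paper leaves implicit (it is the same argument already used inside the proof of Lemma~\ref{lemma:val}).
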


Let $\tilde{M}$ (resp.\ $\tilde{N}$) be the matrix of size $r \times (n+1)$ obtained by concatenating matrix $M$ and column vector $\vect{p}$ (resp.\ $N$ and $\vect{q}$). The dynamic programming operator $g_\eps$ associated with the game over $\GG_\eps$ is the function from $\maxplus^{n+1}$ to itself defined by
\begin{equation}
(g_\eps(\vect{x}))_j \defi 
\min_{\substack{i \in [r]\\ \tilde{M}_{ij} \neq \mpzero}}
\Bigl( -\tilde{M}_{ij} + \max_{\substack{k \in [n+1]\\ \tilde{N}_{ik} \neq \mpzero}} \Bigl( \tilde{N}_{ik}(\eps) + \vect{x}_k\Bigr) \Bigr)
\label{eq:operator} \enspace ,
\end{equation}
for $j \in [n+1]$. 
This function satisfies the following properties:
\begin{enumerate}[(i)]
\item it is order preserving, \ie{} $\vect{x} \leq \vect{y}$ implies $g_\eps(\vect{x}) \leq g_\eps(\vect{y})$ for all $\vect{x}, \vect{y} \in \maxplus^{n+1}$,
\item it is additively homogeneous, \ie{} $g_\eps(\lambda \vect{x}) =
\lambda  g_\eps(\vect{x})$ for all $\lambda \in \maxplus$ and $\vect{x} \in \maxplus^{n+1}$,
\item it preserves $\Real^{n+1}$, thanks to Assumption~\ref{AssumpGame} above.
\end{enumerate}
Such a function can be shown to be non-expansive for the sup-norm. Since it is also piecewise affine, a theorem due to Kohlberg~\cite{Kohlberg1980} implies the following vector $\chi(g_\eps)$, referred to as the \emph{cycle-time vector} of $g_\eps$, exists and has finite entries:
\[
\chi(g_\eps) \defi \lim_{h \rightarrow +\infty} g^h_\eps(0)/h \enspace.
\]
Kohlberg's theorem also implies the $j$-th entry of $\chi(g_\eps)$, which we denote by $\chi_j(g_\eps)$, corresponds to the value of the game when it starts at column node $j$.
We refer to~\cite{AkianGaubertGutermanIJAC2011} for further details. Following the notation above, we consequently have
\[
v(\eps) = \chi_{n+1}(g_\eps) 
\]
for $\eps \geq 0$.

The cycle-time vector $\chi(g_\eps)$ can be expressed in terms of the
cycle-time vectors of dynamic programming operators associated with
certain one-player games.  More precisely, a \emph{(positional)
  strategy} for Player Min is a function $\sigma: [n+1] \to [r]$ associating with each column node $j$ a row node
$\sigma(j)$ such that $\tilde{M}_{\sigma(j)j} \neq \mpzero$. Such a
strategy defines a one-player game (played by Player Max) over the
digraph $\GG_\eps^\sigma$ obtained from $\GG_\eps$ by deleting the arcs
connecting column nodes $j$ with row nodes $i$ such that $i \neq
\sigma(j)$. Its dynamic programming operator $g_\eps^\sigma$ is given by:
\[
(g_\eps^\sigma(\vect{x}))_j = -\tilde{M}_{\sigma(j)j} +
\max_{\substack{k\in \oneto{n+1}\\ \tilde{N}_{\sigma(j)k} \neq \mpzero}}
\left( \tilde{N}_{\sigma(j)k}(\eps) + \vect{x}_k \right) 
\enspace,
\]
for $j \in \oneto{n+1}$.  Observe that this operator is linear in
the tropical semiring $\maxplus$. If we denote by $\Sigma$ the (finite)
set of strategies for Player Min, then as another consequence of
Kohlberg's theorem, it can be shown that
\begin{equation}\label{ChiMinSigma}
\chi(g_\eps) = \min_{\sigma \in \Sigma}
\chi(g_\eps^\sigma) \; .  
\end{equation}
A dual result based on strategies for Player Max
can also be established.

Given $\sigma \in \Sigma$, the $(n+1)$-th entry of the vector $\chi(g_\eps^\sigma)$ can be similarly interpreted as the value of the one-player game associated with the digraph $\GG_\eps^\sigma$ when it starts at column node $n+1$. As the function $g_\eps^\sigma$ is linear in the tropical semiring, it is known~\cite{CochetGaubertGunawardena99} that $\chi_{n+1}(g_\eps^\sigma)$ is equal to the maximal weight-to-length ratio of the elementary circuits of $\GG_\eps^\sigma$ reachable from column node $n+1$. A circuit in this digraph is referred to as a sequence of column nodes $j_0, \dots, j_{l-1}, j_l = j_0$, where $l \geq 1$, and so $l$ is considered to be its length. Note that the reachability relation in $\GG_\eps^\sigma$ does not depend on $\eps$. Let $C_\sigma$ be the set of the elementary circuits of $\GG_0^\sigma$ reachable from column node $n+1$. By~\eqref{ChiMinSigma}, we readily obtain
\begin{equation}
  v(\eps) = \chi_{n+1}(g_\eps) = \min_{\sigma \in \Sigma}
  \max_{(j^{\phantom{0}}_0, \dots, j_{l-1}, j_l) \in C_\sigma} \frac{1}{l}
  \Bigl( \sum_{0 \leq s \leq l-1} \!\!\!-\tilde{M}_{\sigma(j_s)j_s} +
  \tilde{N}_{\sigma(j_s) j_{s+1}}(\eps) \Bigr) \label{eq:chi}
\end{equation}
for $\eps \geq 0$. We deduce that:
\begin{lemma}\label{lemma:chi_piecewise_affine}\label{lemma:chi_non_increasing}
The function $\eps \mapsto v(\eps)$ is non-increasing and piecewise affine. 
\end{lemma}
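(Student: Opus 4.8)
The plan is to read both properties off the formula~\eqref{eq:chi} for $v(\eps)$, valid for $\eps \geq 0$, which expresses $v(\eps)$ as a $\min$ over the finite strategy set $\Sigma$ of a $\max$ over the finite circuit set $C_\sigma$ of weight-to-length ratios. The three steps are: (i) show that for a fixed strategy $\sigma$ and a fixed elementary circuit in $C_\sigma$ the corresponding ratio is an affine function of $\eps$ with non-positive slope; (ii) observe that taking the $\max$ over the finitely many circuits of $C_\sigma$ preserves being non-increasing and piecewise affine; (iii) observe that taking the $\min$ over the finitely many strategies of $\Sigma$ again preserves both properties.

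For step~(i), I would fix $\sigma \in \Sigma$ and an elementary circuit $(j_0, \dots, j_{l-1}, j_l = j_0) \in C_\sigma$. Each entry $\tilde{N}_{\sigma(j_s) j_{s+1}}$ arising along this circuit is non-zero in the tropical sense, and by Assumption~\ref{ass:non_infty} it is also distinct from $+\infty$, hence lies in $\maxplus \cup \pert$. By the definition of the valuation, $\tilde{N}_{\sigma(j_s) j_{s+1}}(\eps)$ equals the constant $\abs{\tilde{N}_{\sigma(j_s) j_{s+1}}}$ when $\tilde{N}_{\sigma(j_s) j_{s+1}} \in \maxplus$, and equals $\abs{\tilde{N}_{\sigma(j_s) j_{s+1}}} - \eps$ when $\tilde{N}_{\sigma(j_s) j_{s+1}} \in \pert$; the terms $-\tilde{M}_{\sigma(j_s) j_s}$ are finite constants (a positional strategy requires $\tilde{M}_{\sigma(j)j} \neq \mpzero$, and $\tilde{M}$ has entries in $\maxplus$). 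Consequently the map
\[
\eps \mapsto \frac{1}{l} \Bigl( \sum_{0 \leq s \leq l-1} -\tilde{M}_{\sigma(j_s) j_s} + \tilde{N}_{\sigma(j_s) j_{s+1}}(\eps) \Bigr)
\]
is affine in $\eps$, with slope equal to minus the number of indices $s \in \{0,\dots,l-1\}$ such that $\tilde{N}_{\sigma(j_s) j_{s+1}} \in \pert$, divided by $l$; in particular this slope is $\leq 0$, so the map is non-increasing.

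For steps~(ii) and~(iii), note first that $C_\sigma$ is finite and non-empty: non-emptiness holds because, by Assumption~\ref{AssumpGame}, every node of $\GG_0^\sigma$ has an outgoing arc, so following $\sigma$ (together with some choice for Player Max) from column node $n+1$ produces an infinite walk, which eventually traces an elementary circuit reachable from $n+1$. Hence $\eps \mapsto \max_{c \in C_\sigma}(\cdots)$ is a pointwise maximum of finitely many affine functions, so it is continuous, piecewise affine (indeed convex), and non-increasing as a maximum of non-increasing functions. Finally, $\Sigma$ is finite, so by~\eqref{eq:chi} the function $v(\eps) = \min_{\sigma \in \Sigma}\max_{c \in C_\sigma}(\cdots)$ is a pointwise minimum of finitely many piecewise affine functions, hence piecewise affine, and it is non-increasing as a minimum of non-increasing functions. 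This yields the lemma.

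The argument is essentially bookkeeping, and I do not expect a genuine obstacle; the one point deserving care is the slope computation in step~(i), which relies precisely on the fact that the valuation sends a germ $\myul{\alpha}$ to the affine term $\alpha - \eps$ while sending ordinary elements of $\maxplus$ to constants. Everything else is the elementary observation that $\min$ and $\max$ of finitely many non-increasing (resp.\ piecewise affine) functions are again non-increasing (resp.\ piecewise affine).
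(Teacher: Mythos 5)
Your proof is correct, but for the monotonicity half it takes a genuinely different route from the paper. For the piecewise-affine claim both arguments are the same: read it off~\eqref{eq:chi}, which expresses $v(\eps)$ as a finite $\min$ of finite $\max$'s of weight-to-length ratios. For monotonicity, however, the paper does \emph{not} use~\eqref{eq:chi}: it proves by induction on $h$ that $\eps \mapsto (g_\eps^h(\vect{x}))_j$ is non-increasing, using only that $\eps$ enters the coefficients $\tilde{N}_{ik}(\eps)$ negatively and that $g_\eps$ is order preserving, and then passes to the limit defining the cycle-time vector. You instead observe that each affine piece in~\eqref{eq:chi} has slope $-k/l \leq 0$, where $k$ counts the germ entries along the circuit, and that finite $\max$ and $\min$ preserve monotonicity. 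Both arguments are sound; yours is more self-contained given~\eqref{eq:chi} and has the advantage of exhibiting the explicit form $(\lambda - k\eps)/l$ of the linear pieces, which is exactly the information the paper re-derives anyway in the proof of Proposition~\ref{prop:emptiness_pre_criterion}. The paper's operator-theoretic argument is slightly more robust in that it does not depend on the circuit decomposition at all, only on the elementary monotonicity properties of the dynamic programming operator. Your attention to the non-emptiness of $C_\sigma$ (via Assumption~\ref{AssumpGame}) and to the finiteness of the entries along a circuit is appropriate and correct.
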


\begin{proof}
The fact that $\eps \mapsto v(\eps)$ is piecewise affine straightforwardly follows from~\eqref{eq:chi}.

We next prove by
induction on $h$ that the function $\eps \mapsto (g_\eps^h(\vect{x}))_j$ is
non-increasing for any $\vect{x} \in \Real^n$ and $j\in [n]$. 
In the first place, observe that $\eps$ appears only negatively in the coefficients
$\tilde{N}_{ik}(\eps)$ of~\eqref{eq:operator}, so the function $\eps
\mapsto (g_\eps(\vect{x}))_j$ is non-increasing. Now consider $h \in \Nat$
and $\eps \leq \eps'$. By induction, we know that $g_{\eps'}^h(\vect{x}) \leq g_{\eps}^h(\vect{x})$ for any $\vect{x} \in
\Real^n$. 
Since the function $\vect{x} \mapsto
g_\eps(\vect{x})$ is order preserving, we have
$g_\eps(g_{\eps'}^h(\vect{x})) \leq g_\eps^{h+1}(\vect{x})$. Besides,
$g_\eps(g_{\eps'}^h(\vect{x})) \geq g_{\eps'}(g_{\eps'}^h(\vect{x}))) =
g_{\eps'}^{h+1}(\vect{x})$. It follows that $(g_{\eps'}^{h+1}(\vect{x}))_j \leq (g_\eps^{h+1}(\vect{x}))_j$ for all $j\in \oneto{n}$. 

We conclude that $\eps \mapsto v(\eps)$ is non-increasing as the limit of non-increasing functions. 
\end{proof}

If we assume that the numerical parts of the non-zero entries of
$M$, $N$, $\vect{p}$ and $\vect{q}$ are integers (this assumption is
obviously satisfied in the application to timed automata in
Section~\ref{sec:tropical_reachability_analysis}), the criterion of
Proposition~\ref{prop:emptiness_chi} can be determined by
evaluating 
$v(\eps)$ at $\eps = 0$ and at a small positive value.

\begin{proposition}\label{prop:emptiness_pre_criterion}
The tropical polyhedron with mixed constraints $\mcR$ is non-empty if, and only if, 
$v(0) \geq 1/(n+1)$ or $v(1/(n+1)^2) \geq 0$.
\end{proposition}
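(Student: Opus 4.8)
The plan is to combine Proposition~\ref{prop:emptiness_chi} (which reduces non-emptiness of $\mcR$ to the existence of some $\eps>0$ with $v(\eps)\ge 0$) with the structural information about $\eps\mapsto v(\eps)$ provided by formula~\eqref{eq:chi} and the integrality hypothesis. First I would record the arithmetic consequence of integrality: since the nonzero numerical parts of $M,N,\vect{p},\vect{q}$ are integers, each summand $-\tilde{M}_{\sigma(j_s)j_s}+\tilde{N}_{\sigma(j_s)j_{s+1}}(\eps)$ in~\eqref{eq:chi} equals an integer or an integer minus $\eps$, and an elementary circuit of $\GG_0^\sigma$ runs through at most $n+1$ distinct column nodes. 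Hence every affine function $\eps\mapsto\frac{1}{l}\bigl(\sum_{s}-\tilde{M}_{\sigma(j_s)j_s}+\tilde{N}_{\sigma(j_s)j_{s+1}}(\eps)\bigr)$ occurring in~\eqref{eq:chi} has the form $\eps\mapsto(A-k\eps)/l$ with $A\in\Int$ and $0\le k\le l\le n+1$. As $v$ is the minimum over $\sigma\in\Sigma$ of the maximum over $C_\sigma$ of finitely many such functions, it is continuous and piecewise affine with every affine piece of this special shape; in particular $v(0)$ is a number of the form $A/l$ with $1\le l\le n+1$, and every nonzero slope of $v$ is $\le -1/(n+1)$.

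Next I would split according to the sign of $v(0)$, which is finite by Kohlberg's theorem. If $v(0)<0$, then Lemma~\ref{lemma:chi_non_increasing} gives $v(\eps)\le v(0)<0$ for all $\eps\ge 0$, so $\mcR$ is empty by Proposition~\ref{prop:emptiness_chi}, while both disjuncts of the criterion fail since $v(0)<1/(n+1)$ and $v(1/(n+1)^2)\le v(0)<0$. If $v(0)>0$, then writing $v(0)=A/l$ with $A\ge 1$ and $l\le n+1$ yields $v(0)\ge 1/(n+1)$, so the first disjunct holds; and by continuity $v$ is positive on a right-neighbourhood of $0$, so $\mcR$ is non-empty. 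In both of these cases the claimed equivalence is clear, so the substantive case is $v(0)=0$, where the first disjunct fails and one must show that $\mcR$ is non-empty if and only if $v(1/(n+1)^2)\ge 0$.

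For this last case the ``if'' direction is immediate from Proposition~\ref{prop:emptiness_chi} applied with $\eps=1/(n+1)^2>0$. For the ``only if'' direction, assume $\mcR$ non-empty, so $v(\eps_0)\ge 0$ for some $\eps_0>0$; since $v$ is non-increasing and $v(0)=0$ this forces $v\equiv 0$ on $[0,\eps_0]$. Put $\eps^{\ast}\defi\sup\{\eps\ge 0\mid v\equiv 0\text{ on }[0,\eps]\}\ge\eps_0>0$. If $\eps^{\ast}=+\infty$ we are done, since then $v(1/(n+1)^2)=0$. Otherwise $\eps^{\ast}$ is a breakpoint of $v$, and I would examine the affine piece $p(\eps)=(A-k\eps)/l$ of $v$ just to the right of $\eps^{\ast}$: continuity of $v$ gives $p(\eps^{\ast})=0$, and $p$ is strictly decreasing (otherwise $v$ would remain equal to $0$ past $\eps^{\ast}$), so $k\ge 1$ and $\eps^{\ast}=A/k$; as $\eps^{\ast}>0$ this forces $A\ge 1$, whence $\eps^{\ast}=A/k\ge 1/k\ge 1/(n+1)>1/(n+1)^2$. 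Therefore $v(1/(n+1)^2)=0\ge 0$, which settles the case $v(0)=0$ and completes the proof.

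The only genuinely delicate point is the quantitative estimate underlying the first and third paragraphs: extracting from~\eqref{eq:chi} together with integrality that a positive value of $v(0)$ is at least $1/(n+1)$, and that the first parameter $\eps^{\ast}$ at which $v$ leaves the level $0$ is at least $1/(n+1)$. This is precisely where the bound $l\le n+1$ on the lengths of elementary circuits (there being only $n+1$ column nodes) is used, and it is what makes a single evaluation of $v$ at the positive point $1/(n+1)^2$ sufficient; everything else is routine monotonicity and continuity bookkeeping for piecewise affine functions.
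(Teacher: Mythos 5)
Your proof is correct, and it follows the same overall framework as the paper's (Proposition~\ref{prop:emptiness_chi}, the representation~\eqref{eq:chi} of $v$ as a finite min--max of affine maps $\eps\mapsto(A-k\eps)/l$ with $A\in\Int$ and $0\le k\le l\le n+1$, and the monotonicity of Lemma~\ref{lemma:chi_non_increasing}), but the key quantitative step is handled differently. The paper bounds from below \emph{every} positive non-differentiability point of $v$: equating two competing pieces $(\lambda-k\eps)/l=(\lambda'-k'\eps)/l'$ gives a breakpoint $(\lambda'l-\lambda l')/(k'l-l'k)\ge 1/(n+1)^2$, so $v$ is affine on all of $[0,1/(n+1)^2]$ and a single sign argument finishes. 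You instead split on the sign of $v(0)$ and, in the only delicate case $v(0)=0$, bound only the first parameter $\eps^{\ast}$ at which $v$ leaves the level $0$: the piece active just to the right of $\eps^{\ast}$ vanishes there, forcing $\eps^{\ast}=A/k$ with $A\ge 1$ and $k\le n+1$, hence $\eps^{\ast}\ge 1/(n+1)$. This is a genuinely different, and in fact sharper, estimate --- your argument shows the criterion already holds with $v(1/(n+1))$ in place of $v(1/(n+1)^2)$ --- at the modest cost of a three-way case analysis that the paper avoids. Both arguments share the same implicit (and standard) observation that on any interval where the min-of-max $v$ is affine it coincides with one of the basic affine functions; you might make that explicit, but the paper does not either.
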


\begin{proof}
Each linear piece of the function $\eps \mapsto v(\eps)$ corresponds to an affine map given by the weight-to-length ratio of an elementary circuit of $\GG_\eps^\sigma$ for some strategy $\sigma$ for Player Min. In consequence, this affine map is of the form $\frac{\lambda - k \eps}{l}$, where:
\begin{enumerate}[(i)]
\item $l$ is the length of the circuit ($l \leq n+1$),
\item $\lambda$ is the sum of $2l$ integers given by the modulus, or their opposite, of some entries of the matrices $M$ and $N$ and of the vectors $\vect{p}$ and $\vect{q}$,
\item $k$ is the number of occurrences of $-\eps$ in the weight $N_{ij}(\eps)$ or $\vect{q}_i(\eps)$ of some arcs of the circuit (so $k \leq l$).
\end{enumerate}
Therefore, any non-differentiability point $\tilde{\eps}$ of the map $\eps \mapsto v(\eps)$ satisfies 
$(\lambda - k \tilde{\eps})/l = (\lambda' - k' \tilde{\eps})/l'$, 
where $l'$, $\lambda'$, and $k'$ have the same properties as $l$, $\lambda$, and $k$ above respectively. It follows that any positive non-differentiability point (if any) is of the form:
\[
\tilde{\eps} = \frac{\lambda' l - \lambda l'}{k' l - l' k} \enspace.
\]
Assume, without loss of generality, that $k/l < k'/l'$. Since $\tilde{\eps} > 0$, the numerator $\lambda' l - \lambda l'$ is a positive integer. This implies that $\tilde{\eps} \geq \eps^{*}$, where $\eps^{*} = 1/(n+1)^2$.

Consequently, the function $\eps \mapsto v(\eps)$ is affine on the
interval $[0, \epsilon^{*}]$. Since it is non-increasing by 
Lemma~\ref{lemma:chi_non_increasing}, we have $v(\eps) < 0$ for all
$\eps > 0$ if, and only if, $v(0) \leq 0$ (by continuity) and
$v({\eps^{*}}) < 0$. The application of Proposition~\ref{prop:emptiness_chi} 
shows that $\mcR$ is non-empty if, and only if, $v(0) > 0$ or $v({\eps^{*}})
\geq 0$. Since $v(0)$ is equal to the weight-to-length ratio of an
elementary circuit of $\GG_0^\sigma$, for some $\sigma \in \Sigma$, it
can be written as $\lambda'' /l''$, where $\lambda''$ and $l''$ have the same
properties as $\lambda$ and $l$ above. It follows that $v(0)$ is
positive if, and only if, it is greater than or equal to $1/(n+1)$. This
completes the proof. 
\end{proof}

Let $\GG'_0$ be the digraph obtained from $\GG_0$ by subtracting
$ 1/(n+1)$ from the weight of each arc connecting a row node with a column node. 
Then,
$v(0) -  (1/(n+1))$ is the value of the mean payoff game associated with
$\GG'_0$ when it starts from column node $n+1$. It follows that the
condition of Proposition~\ref{prop:emptiness_pre_criterion} holds if, and only
if, column node $n+1$ is winning (for Player Max) in one of the two games associated with 
$\GG'_0$ and $\GG_{1/(n+1)^2}$. Let $\GG^\star$ be the digraph obtained as
the disjoint union of $\GG'_0$ and $\GG_{1/(n+1)^2}$,
adding a special row node $0$ and two arcs, with zero weight, connecting
it with column nodes $n+1$ of $\GG'_0$ and $\GG_{1/(n+1)^2}$. The criterion of
Proposition~\ref{prop:emptiness_pre_criterion} can be restated as follows:

\begin{proposition}\label{prop:emptiness_criterion}
The tropical polyhedron with mixed constraints $\mcR$ is non-empty if, and only if, row node $0$ is a winning initial node (for Player Max) in the mean payoff game associated with $\GG^\star$.
\end{proposition}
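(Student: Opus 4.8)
The plan is to combine Proposition~\ref{prop:emptiness_pre_criterion} with the observation already recorded just above the statement: the criterion of that proposition holds precisely when column node $n+1$ is a winning initial node for Player Max in $\GG'_0$ or in $\GG_{1/(n+1)^2}$. Indeed, the value of the game on $\GG'_0$ started at its column node $n+1$ is $v(0) - 1/(n+1)$ (every elementary circuit, in the formula~\eqref{eq:chi}, contains as many row-to-column arcs as column-to-row arcs, and its weight-to-length ratio is normalised by the number of column nodes it visits, so shifting each row-to-column weight by $-1/(n+1)$ shifts each ratio — hence the $\min$–$\max$ value — by exactly $-1/(n+1)$), while the value of the game on $\GG_{1/(n+1)^2}$ started at its column node $n+1$ is $v(1/(n+1)^2)$ by definition of $v(\eps)$. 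Since a node is winning for Player Max exactly when the value of the game from it is nonnegative, ``winning in at least one of the two games'' is the same as ``$v(0) \geq 1/(n+1)$ or $v(1/(n+1)^2) \geq 0$''. It therefore suffices to prove that row node $0$ is winning for Player Max in $\GG^\star$ if, and only if, column node $n+1$ is winning for Player Max in $\GG'_0$ or in $\GG_{1/(n+1)^2}$.

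For this I would analyse the game on $\GG^\star$ started at node $0$. First, $\GG^\star$ still satisfies Assumption~\ref{AssumpGame}: node $0$ has its two outgoing arcs, and every other node retains the out-degree it had in $\GG_0$ or in $\GG_{1/(n+1)^2}$ (only weights were modified), so the value of the game from node $0$ is well defined. Since $0$ is a row node, Player Max moves first, and her only options are the two zero-weight arcs leading to the column nodes $n+1$ of the two components of $\GG^\star$. The two components are vertex-disjoint in $\GG^\star$ and node $0$ has no incoming arc, so once Player Max has committed to one of them the pawn stays inside that component for the whole of the (infinite) play; moreover the single initial payment, of weight $0$, does not affect the mean of the payments in the limit. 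Consequently the value of the game on $\GG^\star$ from node $0$ equals the maximum of the value of the game on $\GG'_0$ from column node $n+1$ and the value of the game on $\GG_{1/(n+1)^2}$ from column node $n+1$. A maximum of two reals is nonnegative if, and only if, at least one of them is nonnegative, which yields exactly the required equivalence; invoking Proposition~\ref{prop:emptiness_pre_criterion} then concludes the proof.

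The argument is essentially bookkeeping, and the only step deserving care is the reduction of the ``union'' game on $\GG^\star$ to the two component games: one must check that no play can pass from one component to the other (which follows from vertex-disjointness together with node $0$ having no incoming arc) and that prepending a single zero-weight arc to a play leaves its mean payoff unchanged. Both facts are immediate from the definitions, so I do not expect any genuine obstacle.
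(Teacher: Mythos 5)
Your proposal is correct and follows exactly the route the paper takes (the paper presents the proposition as a restatement of Proposition~\ref{prop:emptiness_pre_criterion}, justified by the same two observations: the uniform shift of the weight-to-length ratios in~\eqref{eq:chi} making the value of the game on $\GG'_0$ equal to $v(0)-1/(n+1)$, and Player Max's initial choice at row node $0$ realising the maximum of the two component values). Your additional bookkeeping about vertex-disjointness and the irrelevance of the single zero-weight prefix arc is sound.
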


As an immediate consequence, we obtain the following complexity result,
in which the equivalence (ii) $\Leftrightarrow$ (iii) extends
Theorem~3.2 of~\cite{AkianGaubertGutermanIJAC2011}, whereas the
equivalence (i) $\Leftrightarrow$ (iii) extends Theorem~18
of~\cite{AllamigeonGaubertKatzLAA2011} (only non-strict constraints are
considered there). 

\begin{theorem}\label{th:equivalence_MPG}
Under Assumption~\ref{ass:non_infty}, the following problems are (Karp) polynomial-time equivalent:
\begin{compactenum}[(i)]
\item\label{item:redundancy} 
deciding whether a mixed tropical affine inequality is implied by a system of such inequalities;
\item\label{item:emptiness} deciding whether a tropical polyhedron with mixed constraints is
  empty;
\item\label{item:mpg} determining whether a given initial node in a mean payoff game is winning.
\end{compactenum}
\end{theorem}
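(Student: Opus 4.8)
The plan is to establish the claimed Karp-equivalence by closing a cycle of polynomial-time many-one reductions among the three problems, reusing the reductions already packaged in Propositions~\ref{prop:EquivImplEmpt} and~\ref{prop:emptiness_criterion} and in~\cite{AkianGaubertGutermanIJAC2011}. Concretely, I would exhibit the four reductions (i) $\to$ (ii), (ii) $\to$ (i), (ii) $\to$ (iii) and (iii) $\to$ (ii); composing them yields every other direction.

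The reduction (i) $\to$ (ii) is exactly Proposition~\ref{prop:EquivImplEmpt}: from an instance of the implication problem~\eqref{eq:implication} it constructs in polynomial time a tropical polyhedron with mixed constraints $\QQ$ that is empty if and only if the implication holds, so that yes-instances are mapped to yes-instances. For the converse (ii) $\to$ (i) I would use the elementary remark that the polyhedron $\mcR$ defined by a system $A\vect{x}\mpplus\vect{c}\sleq B\vect{x}\mpplus\vect{d}$ is empty precisely when the implication $A\vect{x}\mpplus\vect{c}\sleq B\vect{x}\mpplus\vect{d}\implies 0\sleq\mpzero$ holds: the latter is a legitimate instance of~\eqref{eq:implication} (take $g=0$, $\vect{f}=(\mpzero,\dots,\mpzero)$ and $h=\mpzero$, so that the consequent reduces to $0\sleq\mpzero$, which is never satisfied, while $g$ is not identically null as required), and it is computed in linear time.

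The reduction (ii) $\to$ (iii) is Proposition~\ref{prop:emptiness_criterion}: after the harmless preprocessing that enforces Assumption~\ref{AssumpGame}, it outputs in polynomial time the weighted digraph $\GG^\star$ for which $\mcR$ is non-empty exactly when row node $0$ is winning for Player Max; since the arc weights of $\GG^\star$ are rationals with denominators $n+1$ and $(n+1)^2$, multiplying all of them by $(n+1)^2$ turns this into an integer-weighted mean payoff game with the same set of winning nodes. For (iii) $\to$ (ii) I would invoke~\cite[Theorem~3.2]{AkianGaubertGutermanIJAC2011}, according to which deciding whether a given initial node of a mean payoff game is winning reduces in polynomial time to deciding feasibility of a closed tropical polyhedron; since a closed tropical polyhedron is the particular case of a tropical polyhedron with mixed constraints in which all coefficients belong to $\maxplus$ (so that Assumption~\ref{ass:non_infty} holds trivially), this is already a reduction to (ii).

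Composing these four reductions produces all the directed reductions among (i), (ii) and (iii), which is precisely the statement of the theorem. I do not expect a conceptual obstacle — the substance lies in the earlier propositions and in~\cite{AkianGaubertGutermanIJAC2011} — but rather a layer of careful bookkeeping: one must check that each construction runs in polynomial time and is genuinely many-one, that the output instances meet the standing non-degeneracy hypotheses (on $\vect{e}$, $g$, $h$, and Assumption~\ref{AssumpGame} on the game graph), and that they stay within the scope of Assumption~\ref{ass:non_infty}. The one slightly delicate point is that the only way a $+\infty$ coefficient could be created is through the auxiliary inequality $0\sleq\mpplus_{\vect{e}_i\neq-\infty}(+\infty)\vect{x}_i$ produced by Proposition~\ref{prop:EquivImplEmpt} in the degenerate case $g=\mpzero$; this case has to be treated separately (or absorbed into the analysis of systems with $+\infty$ coefficients).
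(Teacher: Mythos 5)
Your proposal is correct and follows essentially the same route as the paper, which presents the theorem as an immediate consequence of Proposition~\ref{prop:EquivImplEmpt} (giving (i) $\to$ (ii)), Proposition~\ref{prop:emptiness_criterion} (giving (ii) $\to$ (iii)), and the known equivalence with closed tropical polyhedra from~\cite{AkianGaubertGutermanIJAC2011} (giving (iii) $\to$ (ii)), the reduction (ii) $\to$ (i) being the same triviality you use. You are in fact slightly more careful than the paper in flagging that the auxiliary inequality $0\sleq\mpplus_{\vect{e}_i\neq-\infty}(+\infty)\vect{x}_i$ arising when $g=\mpzero$ can take the constructed instance outside Assumption~\ref{ass:non_infty}, a point the paper leaves implicit.
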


Problem~\eqref{item:mpg} is known to be in $\mathsf{NP} \cap
\mathsf{coNP}$, see~\cite{ZwickPatersonTCS1996}. We deduce from Theorem~\ref{th:equivalence_MPG} that Problems~\eqref{item:redundancy} and~\eqref{item:emptiness} both belong to the same complexity class ($\mathsf{NP}$ and $\mathsf{coNP}$ are closed under Karp reductions).
Whether Problem~\eqref{item:mpg} can be solved in polynomial time has
been an open question since the first combinatorial
algorithm~\cite{GurvichCMMP1988}.  Value iteration leads to a
pseudo-polynomial algorithm~\cite{ZwickPatersonTCS1996}. Several
algorithms rely on the idea of
strategy iteration~\cite{Howard1960}, applying various strategy improvement
rules, see in
particular~\cite{BjorklundVorobyovDAM2007,CochetGaubertGunawardena99,DhingraGaubertVALUETOOLS2006,JurdzinskiSODA2006}. A
remarkable example has recently been
constructed~\cite{Friedmann-AnExponentialLowerB} in which some common
rules lead to an exponential number of iterations. However, many
algorithms, including the one
of~\cite{CochetGaubertGunawardena99,DhingraGaubertVALUETOOLS2006}, are
known to have experimentally a small average case number of iterations
(growing sublinearly with the dimension), see the benchmarks
in~\cite{ChaloupkaESA2009}.

The \emph{support} of a tropically convex set $\CC \subset \maxplus^n$ is defined as the set $\supp(\CC)$ of indices $j \in \oneto{n}$ such that there exists $\vect{x} \in \CC$ verifying $\vect{x}_j \neq -\infty$. Note that $\supp(\CC)$ is the greatest subset $J \subset \oneto{n}$ such that $J = \{ j \in \oneto{n} \mid \vect{x}_j \neq -\infty\}$ for a certain $\vect{x} \in \CC$. 

Mean payoff games can be used to compute the support of the tropical polyhedron with mixed constraints $\mcR$. Indeed, $j$ belongs to the support of $\mcR$ if, and only if, $j \in \supp(\mcR_\eps)$ for some $\eps > 0$. By~\cite[Theorem~3.2]{AkianGaubertGutermanIJAC2011}, the fact that $j \in \supp(\mcR_\eps)$ is equivalent to $\chi_j(g_\eps)\geq 0$ and $\chi_{n+1}(g_\eps)\geq 0$, \ie\ to the fact that column nodes $j$ and $n+1$ are both winning initial nodes for Player Max in the game associated with $\GG_\eps$. Using the same arguments as above, it can be shown that there exists $\eps > 0$ such that $\chi_j(g_\eps) \geq 0$ if, and only if, $\chi_j(g_0) \geq 1/(n+1)$ or $\chi_j(g_{1/(n+1)^2}) \geq 0$. 

In consequence, the support of $\mcR$ can be computed by determining the winning initial nodes in the games associated with $\GG'_0$ and $\GG_{1/(n+1)^2}$. We point out that some policy iteration based algorithms, such as the one of~\cite{CochetGaubertGunawardena99,DhingraGaubertVALUETOOLS2006}, directly provide the cycle-time vector $\chi(g)$ of the dynamic programming operator $g$ of a mean payoff game (and so all the  winning initial nodes). 

\begin{remark}\label{remark:certificate}
Positional strategies for Player Max are defined symmetrically to the ones for Player Min, \ie \ as functions $\tau$ from row nodes to column nodes, such that for each row node $i$ there is an arc in $\GG_\eps$ connecting it with column node $\tau(i)$. Such a strategy $\tau$ induces a one-player game (now played by Player Min) whose associated digraph $\GG^\tau_\eps$ is obtained by removing from $\GG_\eps$ the arcs connecting row nodes $i$ with columns nodes $j$ such that $j \neq \tau(i)$. 

Positional strategies can be used as certificates to ensure that a mean payoff game is winning for one of the players, and these certificates can be checked in polynomial time. For instance, given a column node $j \in \oneto{n+1}$, a strategy $\sigma$ for Player Min satisfying $\chi_j(g_\eps^\sigma) \leq 0$ 
ensures that $\chi_j(g_\eps) \leq 0$ by~\eqref{ChiMinSigma}. 
In other words, column node $j$ is a winning initial node for Player Min in the game associated with $\GG_\eps$. 
Since as explained above $\chi_j(g_\eps^\sigma)$ is given by the maximal weight-to-length ratio of the circuits reachable from column node $j$ in $\GG_\eps^\sigma$, it can be checked that $\chi_j(g_\eps^\sigma)$ is less than or equal to $0$ in polynomial time using Karp's algorithm.
\end{remark}

\subsubsection{Polyhedra defined by systems with $+\infty$ coefficients.} 

We now deal with the case in which Assumption~\ref{ass:non_infty} is not satisfied. 
Suppose that the tropical polyhedron with mixed constraints $\mcR$ is defined by~\eqref{eq:r_system}, where now $N \in \smaxplus^{r \times n}$. Observe that we can still assume that $\vect{q} \in (\smaxplus \setminus \{+\infty\})^r$, because any inequality in which $\vect{q}_i$ is equal to $+\infty$ is trivial.

Given $I \subset \oneto{r}$, we denote by $\mcR_I$ the polyhedron defined by the inequalities
\[
M_{i1} \vect{x}_1 \mpplus \cdots  \mpplus M_{in} \vect{x}_n \mpplus \vect{p}_i \sleq
(\mpplus_{N_{ij} \neq +\infty} N_{ij} \vect{x}_j ) \mpplus \vect{q}_i \qquad \text{for}\ i \in I.
\]
The algorithm in Figure~\ref{fig:emptyness_test_general_case} determines whether $\mcR$ is empty by evaluating the emptiness of polyhedra of the form $\mcR_I$. To prove the correctness of this algorithm, we shall use the following lemma.

\begin{lemma}\label{lemma:invariant}
At each iteration of the loop at Line~\ref{line:loop}, we have $\mcR \subset \mcR_{I \cup I'}$.
\end{lemma}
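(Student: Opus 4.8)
The plan is to prove the invariant by induction on the number of iterations of the loop at Line~\ref{line:loop}. Recall that, by construction, $\mcR_{I\cup I'}$ is the polyhedron cut out by the inequalities indexed by $i\in I\cup I'$ in which every coefficient $N_{ij}=+\infty$ has been \emph{deleted} from the right-hand side; hence $\mcR\subset\mcR_{I\cup I'}$ amounts to saying that every $\vect{x}\in\mcR$ satisfies, for each $i\in I\cup I'$,
\[
M_{i1}\vect{x}_1\mpplus\cdots\mpplus M_{in}\vect{x}_n\mpplus\vect{p}_i\ \sleq\ \bigl(\mpplus_{N_{ij}\neq+\infty}N_{ij}\vect{x}_j\bigr)\mpplus\vect{q}_i\,.
\]
The key observation is that this deleted inequality coincides with the original inequality $i$ of the system~\eqref{eq:r_system} as soon as $\vect{x}_j=-\infty$ for every $j$ with $N_{ij}=+\infty$: in that case $(+\infty)\vect{x}_j=-\infty$ by the convention $(-\infty)\mptimes(+\infty)=-\infty$, so the deleted terms contribute nothing to the idempotent sum on the right-hand side, and the two right-hand sides agree. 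Consequently, to prove the lemma it is enough to show that, at every iteration, each index $i\in I\cup I'$ has all of its ``$+\infty$-positions'' $\{\,j:N_{ij}=+\infty\,\}$ disjoint from $\supp(\mcR)$; indeed, a coordinate $j\notin\supp(\mcR)$ is $-\infty$ at \emph{every} point of $\mcR$, by the very definition of the support.

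For the first iteration, $I$ consists of the constraints with no $+\infty$ coefficient on the right-hand side (equivalently, $I=\emptyset$), for which the displayed condition holds vacuously, so $\mcR\subset\mcR_I$; and $I'$ is then handled exactly as in the inductive step below. For the inductive step, assume the invariant holds at the end of the previous iteration, so that at the start of the current one $\mcR\subset\mcR_I$ (the set accepted last time having been merged into $I$). Since $\mcR_I$ has no $+\infty$ coefficient on the right-hand side, Assumption~\ref{ass:non_infty} applies to it and its support $\supp(\mcR_I)$ is well-defined; moreover monotonicity of the support under inclusion gives $\supp(\mcR)\subset\supp(\mcR_I)$. The set $I'$ computed in the current iteration consists precisely of those indices $i\notin I$ all of whose $+\infty$-positions lie outside $\supp(\mcR_I)$, hence outside $\supp(\mcR)$. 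Together with the inductive hypothesis on $I$, this shows that every $i\in I\cup I'$ has its $+\infty$-positions disjoint from $\supp(\mcR)$, whence $\mcR\subset\mcR_{I\cup I'}$ by the reduction of the first paragraph.

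The only genuinely delicate point is the interplay between the $+\infty$ coefficients and the fact that different points of $\mcR$ may have different supports: the argument relies on the stronger statement that a coordinate outside $\supp(\mcR)$ vanishes at \emph{all} points of $\mcR$, not just at some point, which is exactly what the definition of $\supp$ provides. Everything else is bookkeeping, provided one keeps track that the update performed by the loop is ``accept every remaining constraint whose $+\infty$-positions avoid the current $\supp(\mcR_I)$'', and that $\supp(\mcR)\subset\supp(\mcR_I)$ follows from the previously established inclusion $\mcR\subset\mcR_I$.
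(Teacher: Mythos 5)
Your proof is correct and follows essentially the same route as the paper's: induction on the loop iterations, using the inductive inclusion to get $\supp(\mcR)\subset\supp(\mcR_I)$, so that every coordinate at a $+\infty$-position of an index in the new $I'$ equals $-\infty$ on all of $\mcR$, whence the truncated inequality agrees with the original one there. The only cosmetic differences are that the paper phrases this as a chain of two inequalities rather than your ``the two right-hand sides coincide'' observation, and that it concludes $\mcR\subset\mcR_{I\cup I'}$ directly from $\mcR\subset\mcR_I$ and $\mcR\subset\mcR_{I'}$ instead of carrying the strengthened invariant about the $+\infty$-positions of the indices already in $I$ (which your wording slightly conflates with the stated hypothesis $\mcR\subset\mcR_I$, though either variant closes the induction).
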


\begin{proof}
We prove the lemma by induction on the number of iterations of the loop. Before the first iteration, we have $J = \oneto{n}$ and so $I \cup I' = \{ i \in \oneto{r} \mid \ N_{ij} \neq +\infty  \ \text{for all} \ j\in \oneto{n} \}$. Thus, the polyhedron $\mcR_{I \cup I'}$ is defined by a subsystem of $M \vect{x} \mpplus \vect{p} \sleq N \vect{x} \mpplus \vect{q}$, and the inclusion $\mcR \subset \mcR_{I \cup I'}$ is immediate. 

Now suppose that at iteration $k$ we have $\mcR \subset \mcR_{I_k \cup I'_k}$, and let $\vect{x} \in \mcR$. If the loop is iterated again, then the sets $J$, $I$ and $I'$ are respectively given by 
\begin{align*}
J_{k+1} & = \supp(\mcR_{I_k \cup I'_k}) \\
I_{k+1} & = I_k \cup I'_k \\
I'_{k+1} & = \{ i \not \in I_{k+1} \mid \ N_{ij} \neq +\infty \ \text{for all} \ j \in J_{k+1} \}
\end{align*}
In consequence, we have $\supp(\mcR) \subset J_{k+1}$, and so $\vect{x}_j = -\infty$ for $j \in \oneto{n} \setminus J_{k+1}$. 
Then, we deduce that $\vect{x}$ satisfies the inequality
\[
M_{i1} \vect{x}_1 \mpplus \cdots  \mpplus M_{in} \vect{x}_n \mpplus \vect{p}_i \sleq
( \mpplus_{j \in J_{k+1}} N_{ij} \vect{x}_j ) \mpplus \vect{q}_i
\sleq 
(\mpplus_{N_{ij} \neq +\infty} N_{ij} \vect{x}_j ) \mpplus \vect{q}_i
\]
for any $i\in I'_{k+1}$. Thus, we have $\vect{x} \in \mcR_{I'_{k+1}}$. Since $\vect{x} \in \mcR \subset \mcR_{I_k \cup I'_k} = \mcR_{I_{k+1}}$, we readily obtain $\vect{x} \in \mcR_{I_{k+1} \cup I'_{k+1}}$, which completes the proof.
\end{proof}

\begin{figure}
\begin{algorithmic}[1]
\REQUIRE polyhedron with mixed constraints $\mcR$ defined by the system $M \vect{x} \mpplus \vect{p} \sleq N \vect{x} \mpplus \vect{q}$
\ENSURE \TRUE{} if $\mcR$ is empty, \FALSE{} otherwise
\STATE $J := \oneto{n}$ 
\STATE $I := \emptyset$
\STATE $I' := \{ i \in \oneto{r} \mid \ N_{ij} \neq +\infty \ \text{for all} \ j \in J\}$
\WHILE{$I' \neq \emptyset$}\label{line:loop}
  \STATE $I := I \cup I'$\label{line:j_increase}
  \IF{$\mcR_I$ is empty}
    \RETURN \TRUE
  \ELSE
    \STATE $J := \supp(\mcR_I)$\label{line:i_update}
    \STATE $I' := \{ i \not \in I \mid \ N_{ij} \neq +\infty \ \text{for all} \ j \in J\}$\label{line:jprime_update}
  \ENDIF
\ENDWHILE
\RETURN \FALSE\label{line:false}
\end{algorithmic}
\caption{Determining whether a polyhedron with mixed constraints is empty in the general case.}\label{fig:emptyness_test_general_case}
\end{figure}

\begin{proposition}\label{prop:correctness}
The algorithm of Figure~\ref{fig:emptyness_test_general_case} is correct, and the number of iterations of the loop at Line~\ref{line:loop} is bounded by $\min(n,r)$.
\end{proposition}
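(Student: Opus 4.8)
The plan is to prove correctness by verifying two implications separately --- soundness (the algorithm answers ``empty'' only when $\mcR$ is empty) and completeness (it answers ``non-empty'' only when $\mcR$ is non-empty) --- and then to bound the iteration count. Observe first that every polyhedron $\mcR_I$ inspected by the algorithm is defined by a system with no $+\infty$ coefficient on the right-hand side, so Assumption~\ref{ass:non_infty} applies to it; hence its emptiness and its support can be decided through the mean payoff game constructions of Section~\ref{Subsection:EquivMPG} (Theorem~\ref{th:equivalence_MPG} and the subsequent discussion of supports), and the algorithm is well defined.

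Soundness is immediate from Lemma~\ref{lemma:invariant}: at the moment the algorithm tests whether $\mcR_I$ is empty, the current $I$ equals the ``old'' value of $I \cup I'$, so the invariant gives $\mcR \subseteq \mcR_I$; therefore $\mcR_I = \emptyset$ forces $\mcR = \emptyset$, and returning ``empty'' is correct. The only way the algorithm reports ``empty'' is precisely this test, so the ``empty'' answer is always sound.

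For completeness, suppose the algorithm reaches Line~\ref{line:false}, i.e.\ the loop condition $I' \neq \emptyset$ fails. Then either the loop was never entered, in which case $I = \emptyset$ and $J = \oneto{n}$, or it fails right after an iteration in which $\mcR_I$ was found non-empty, $J$ was set to $\supp(\mcR_I)$, and the recomputed $I' = \{ i \notin I \mid N_{ij} \neq +\infty \text{ for all } j \in J \}$ turned out to be empty. In all cases $\mcR_I \neq \emptyset$ and $J = \supp(\mcR_I)$. Using the characterization of the support recalled just before the statement, I would pick a point $\vect{x}^{\star} \in \mcR_I$ whose \emph{exact} support is $J$, that is $\vect{x}^{\star}_j \neq -\infty \iff j \in J$, and check that $\vect{x}^{\star} \in \mcR$. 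For $i \in I$, the $i$-th defining inequality of $\mcR_I$ already implies the $i$-th inequality of the full system $M\vect{x} \mpplus \vect{p} \sleq N\vect{x} \mpplus \vect{q}$, because passing from the former right-hand side to the latter only adjoins the terms $N_{ij}\vect{x}^{\star}_j$ with $N_{ij} = +\infty$, each $\sgeq -\infty$, so the right-hand side can only grow with respect to $\sleq$. For $i \notin I$, emptiness of $I'$ yields some $j \in J$ with $N_{ij} = +\infty$; since $\vect{x}^{\star}_j \neq -\infty$ we get $N_{ij}\vect{x}^{\star}_j = +\infty$, so the $i$-th inequality holds trivially. Hence $\vect{x}^{\star} \in \mcR$, so $\mcR$ is non-empty and the answer is correct. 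This completeness step is the one I expect to be the main obstacle: one cannot argue $\mcR_I \subseteq \mcR$ in general, since an arbitrary point of $\mcR_I$ may violate the discarded $+\infty$-inequalities --- the whole point is that a point of $\mcR_I$ of \emph{maximal} support renders all those inequalities vacuous.

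Finally, the iteration bound. Each pass through the loop at Line~\ref{line:loop} replaces $I$ by $I \cup I'$ with $I'$ non-empty and, by construction, disjoint from $I$; thus $|I|$ strictly increases and, since $I \subseteq \oneto{r}$, there are at most $r$ iterations (which also proves termination). On the other hand, enlarging $I$ shrinks $\mcR_I$, hence shrinks $\supp(\mcR_I)$, so the successive values of $J$ form a non-increasing chain of subsets of $\oneto{n}$ starting from $\oneto{n}$; and inspecting the update of $I'$ shows that whenever $J$ fails to strictly decrease, the next $I'$ is empty and the loop stops. Hence every iteration but the last strictly shrinks $J$, and the number of iterations is at most $\min(n,r)$.
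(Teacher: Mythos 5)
Your proof follows the paper's argument essentially verbatim: soundness via Lemma~\ref{lemma:invariant}, completeness by exhibiting a point of $\mcR_I$ whose support is exactly $J=\supp(\mcR_I)$ and checking that emptiness of $I'$ renders every discarded inequality trivially satisfied (right-hand side $+\infty$), and the iteration bound by observing that $I$ strictly grows while $J$ must strictly shrink for the loop to continue. Your additional remarks --- that each $\mcR_I$ satisfies Assumption~\ref{ass:non_infty} so the mean payoff game machinery of Section~\ref{Subsection:EquivMPG} applies to it, and the explicit treatment of the case where the loop is never entered --- are correct refinements of the same argument.
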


\begin{proof}
Suppose that the algorithm returns \textbf{true}. Then $\mcR_I = \emptyset$, and by Lemma~\ref{lemma:invariant} we have $\mcR = \emptyset$. 

Now assume that \textbf{false} is returned. Let $\vect{x} \in \mcR_I$ be such that $\{j \in \oneto{n} \mid \vect{x}_j \neq -\infty\}=\supp (\mcR_I)=J$. We claim that $\vect{x} \in \mcR$. Indeed, since $I' = \emptyset$ (which is the condition to reach Line~\ref{line:false} and return \textbf{false}), we know that for all $i \not \in I$ there exists $j \in J$ such that $N_{ij} = +\infty$. As $\vect{x}_j \neq -\infty$ for $j\in J$, this means that the $i$-th inequality of the system $M \vect{x} \mpplus \vect{p} \sleq N \vect{x} \mpplus \vect{q}$ is satisfied (the right-hand side is equal to $+\infty$). As $\vect{x}$ also satisfies the inequalities of the system indexed by $i \in I$, the claim is proved.

Finally, observe that at each iteration of the loop, the set $I$ is strictly increased at Line~\ref{line:j_increase}. Similarly, if at Line~\ref{line:jprime_update} the set $I'$ is not empty, then necessarily the set $J$ has been strictly decreased at Line~\ref{line:i_update}. We deduce that the number of iterations is indeed bounded by $\min(n,r)$. 
\end{proof}

The idea behind the algorithm of Figure~\ref{fig:emptyness_test_general_case} can be used to build certificates. 
\begin{proposition}
The problem of determining whether a polyhedron with mixed constraints is empty belongs to $\mathsf{NP} \cap \mathsf{coNP}$.
\end{proposition}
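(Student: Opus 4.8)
The plan is to reduce the general case to the situation already treated under Assumption~\ref{ass:non_infty}, using the algorithm of Figure~\ref{fig:emptyness_test_general_case} as a black box. First I would observe that this algorithm performs, by Proposition~\ref{prop:correctness}, at most $\min(n,r)$ iterations, and that each iteration only needs two ``oracle'' operations applied to the sub\-polyhedron $\mcR_I$: deciding whether $\mcR_I$ is empty, and computing $\supp(\mcR_I)$. By construction $\mcR_I$ has no $+\infty$ coefficient on the right-hand side of its defining inequalities (and, as already noted, $\vect{q}$ may be assumed to contain no $+\infty$), so $\mcR_I$ satisfies Assumption~\ref{ass:non_infty}, and both oracle operations fall within the scope of the machinery of Section~\ref{subsec:elimination_step}.

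Next I would argue that each oracle operation lies in $\mathsf{NP}\cap\mathsf{coNP}$, and more precisely admits polynomial-size certificates checkable in polynomial time. For the emptiness of $\mcR_I$ this is Theorem~\ref{th:equivalence_MPG} (equivalence with mean payoff games, which belong to $\mathsf{NP}\cap\mathsf{coNP}$, both classes being closed under Karp reductions); concretely, by Proposition~\ref{prop:emptiness_criterion}, emptiness of $\mcR_I$ is decided by a single mean payoff game on $\GG^\star$, and a winning positional strategy for the appropriate player (Remark~\ref{remark:certificate}) is such a certificate. For the support, recall from the discussion preceding Remark~\ref{remark:certificate} that an index $j$ belongs to $\supp(\mcR_I)$ if, and only if, prescribed nodes are winning for Player Max in the two games attached to $\GG'_0$ and $\GG_{1/(n+1)^2}$; thus $\supp(\mcR_I)$ together with one positional-strategy certificate per index $j$ — certifying membership when $j\in\supp(\mcR_I)$ and non-membership otherwise — is again a polynomial-size certificate.

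Finally I would assemble a global certificate by recording a transcript of the run of Figure~\ref{fig:emptyness_test_general_case}: the successive sets $I$ and, for each oracle call, its answer accompanied by the certificate above. Since the oracle answers completely determine the control flow, a verifier can re-run the algorithm, checking at each step that the queried instance is the one it would itself form, that the attached certificate is valid (so that the declared answer is \emph{correct}, not merely plausible), and at the end that the algorithm's output matches the claimed outcome; the transcript is polynomial because there are at most $\min(n,r)$ iterations, each queried instance is a sub-instance of the input, and each certificate is polynomial. This simultaneously yields a polynomial certificate for ``$\mcR$ is empty'' (so the problem is in $\mathsf{NP}$) and one for ``$\mcR$ is non-empty'' (so it is in $\mathsf{coNP}$). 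The delicate point, and the reason one cannot simply invoke $\mathsf{P}^{\mathsf{NP}\cap\mathsf{coNP}}$, is precisely this soundness step: a dishonest prover must be unable to feed wrong oracle answers, which works here only because each answer carries a \emph{verifiable} mean-payoff-game certificate and because the sequence of queries is reconstructible from the input and the earlier answers. As an alternative avoiding transcripts, in the ``non-empty'' case one can directly exhibit a point of $\mcR$ — the point of $\mcR_I$ with support $\supp(\mcR_I)$ produced in the proof of Proposition~\ref{prop:correctness}, which can be taken with polynomially bounded bit-size since feasible points of mixed systems with integer data and no $+\infty$ coefficient can — while in the ``empty'' case one provides the strictly increasing chain of sets $I$ together with the support certificates underlying Lemma~\ref{lemma:invariant} and a mean-payoff-game certificate for $\mcR_I=\emptyset$.
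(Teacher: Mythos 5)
Your proposal is correct and matches the paper's proof in essence: both build polynomial certificates from the run of the algorithm of Figure~\ref{fig:emptyness_test_general_case}, attaching positional mean-payoff-game strategies (Remark~\ref{remark:certificate} and the support discussion) to certify each emptiness and support claim, with the full chain of sets $I_l, J_l$ used for the ``empty'' direction. The only cosmetic difference is that for the ``non-empty'' direction the paper records just the terminal pair $(I,J)$ with $I'=\emptyset$ rather than the whole transcript, which suffices by the argument in Proposition~\ref{prop:correctness}; your transcript version is equally valid, and your unproven side remark about exhibiting a small feasible point is not needed.
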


\begin{proof}
A certificate that $\mcR \neq \emptyset$ can be provided by two sets $J \subset \oneto{n}$ and $I \subset \oneto{r}$ such that $\{ i \not \in I \mid \ N_{ij} \neq +\infty \ \text{for all} \ j \in J \} = \emptyset$, together with positional strategies ensuring that $\mcR_I \neq \emptyset$ and $\supp(\mcR_I) = J$ (see Remark~\ref{remark:certificate} and the discussion on supports which precedes it). The first property of the sets $I$ and $J$ can be checked in polynomial time, the same as the properties $\mcR_I \neq \emptyset$ and $\supp(\mcR_I) = J$ thanks to the positional strategies for the players. As shown in the proof of Proposition~\ref{prop:correctness}, this ensures that $\mcR$ is not empty. In consequence, the problem is in $\mathsf{coNP}$.

To certify that $\mcR = \emptyset$, we use a decreasing sequence $J_1 = \oneto{n}, J_2, \dots, J_k$ of subsets of $\oneto{n}$ and an increasing sequence $I_1, \dots, I_k$ of subsets of $\oneto{r}$ such that 
\[
I_l = \{ i \in \oneto{r} \mid  N_{ij} \neq +\infty \ \text{for all} \ j \in J_l \} \; ,
\]
for all $l \in \oneto{k}$, together with positional strategies ensuring that $\mcR_{I_l} \neq \emptyset$ and $J_{l+1} = \supp(\mcR_{I_l})$ for $l\in \oneto{k-1}$, and that $\mcR_{I_k} = \emptyset$. Since $I_{l+1} = I_l \cup \{ i \not \in I_l \mid \ N_{ij} \neq +\infty \ \text{for all} \ j \in J_{l+1} \}$ for $l\in \oneto{k-1}$, it can be shown by induction on $l \in \oneto{k}$ that $\mcR \subset \mcR_{I_l}$, using the same technique as in the proof of Lemma~\ref{lemma:invariant}. Thus, these certificates allow to prove that $\mcR = \emptyset$, and they can be checked in polynomial time. This completes the proof.
\end{proof}

\subsection{Polynomial-time weak redundancy elimination}\label{subsec:weak_criterion}

Since no polynomial-time algorithm is known to evaluate the 
criteria given in Section~\ref{Subsection:EquivMPG}, we also develop a sufficient criterion for which a potentially faster algorithm exists. It consists in checking whether $\vect{e} \vect{x} \mpplus g \sleq \vect{f} \vect{x} \mpplus h$ is a linear combination of the inequalities in $A \vect{x} \mpplus \vect{c} \sleq B \vect{x} \mpplus \vect{d}$. Note that in general, this condition is not necessary for~\eqref{eq:implication} to hold (the tropical analogue of Farkas' lemma given in~\cite{AllamigeonGaubertKatzLAA2011} shows that taking tropical linear combinations does not suffice to deduce all valid inequalities).

Now we see the constraint
$\vect{e} \vect{x} \mpplus g \sleq \vect{f} \vect{x} \mpplus h$ as the
$(2n+2)$-dimensional row vector $\vect{v} \mydef (\vect{e},g,\vect{f},h)$. Similarly, we
introduce the matrix $R \in \smaxplus^{p \times (2n+2)}$, whose rows
are given by the vectors $(A_i,\vect{c}_i,B_i,\vect{d}_i)$, 
for $i \in \oneto{p}$, where $A_i$ and $B_i$ denote the $i$-th rows of 
$A$ and $B$ respectively. We are reduced to the problem of determining
whether there exists a $p$-dimensional row vector $\vect{w}$ (with entries in $\smaxplus$) such that
$\vect{v} = \vect{w} R$. Without loss of generality, we
assume that no row of $R$ is identically zero. We propose 
a method based on residuation theory (see
\eg~\cite{GaubertPlusSTACS1997}). 
Given $x \in \smaxplus$, the self-map $z \mapsto z x$ on $\smaxplus$ can be shown to be residuated, meaning that
for each $y \in \smaxplus$ there exists a maximal element of the set $\left\{z \in \smaxplus \mid z x \sleq y\right\}$, denoted by $y/x$. Indeed,
the later element is given by: 
\[
y \mapsto y / x := 
\begin{cases}
\myul{(\abs{y} - \abs{x})} & \text{if} \ x \in \R, \ y \in \pert \; , \\
\abs{y} - \abs{x} & \text{otherwise},
\end{cases}
\]
where it is used the conventions $\alpha - (\mpzero) = +\infty$ for $\alpha \in \cmaxplus$, 
$\alpha - (+\infty) = \mpzero$ for $\alpha \in \maxplus$, and $(+\infty) - (+\infty) = +\infty$.

\begin{proposition}\label{prop:weak_implication}
Define the $p$-dimensional row vector $\vect{w}^{*}$ by
\[
\vect{w}^*_i\mydef \min_{j \in \oneto{2n+2}} (\vect{v}_j/R_{ij}) 
\]
for $i \in \oneto{p}$. Then, the inequality $\vect{e} \vect{x} \mpplus g \sleq \vect{f} \vect{x}
\mpplus h$ is a linear combination of the inequalities in the system $A
\vect{x} \mpplus \vect{c} \sleq B \vect{x} \mpplus \vect{d}$ if, and
only if, $\vect{v} = \vect{w}^{*} R$. 
\end{proposition}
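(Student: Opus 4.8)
The plan is to run the standard residuation argument, using the fact recalled just before the statement that for each $x \in \smaxplus$ the self-map $z \mapsto z x$ is residuated, i.e.\ $z x \sleq y$ if and only if $z \sleq y/x$, with $y/x$ given by the displayed formula together with its conventions for $\mpzero$ and $+\infty$. The first step is a translation: a tropical linear combination of the rows of $A \vect{x} \mpplus \vect{c} \sleq B \vect{x} \mpplus \vect{d}$ with coefficient row vector $\vect{w} \in \smaxplus^p$ is exactly the mixed inequality whose coefficient vector is $\vect{w} R$. Indeed, scaling the $i$-th inequality by $\vect{w}_i$ preserves $\sleq$ by Property~\eqref{lemma:smaxplus:P4} of Lemma~\ref{lemma:smaxplus} (and distributes over the coefficients), and summing inequalities preserves $\sleq$ because $\mpplus$ is the maximum in the totally ordered idempotent semiring $\smaxplus$ (Lemma~\ref{lemma:totally_ordered_idempotent}). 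Hence $\vect{e} \vect{x} \mpplus g \sleq \vect{f} \vect{x} \mpplus h$ is such a linear combination precisely when $\vect{v} = \vect{w} R$ for some $\vect{w}$.

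Next I would show that $\vect{w}^{*}$ is the greatest row vector satisfying $\vect{w} R \sleq \vect{v}$ (entrywise). Since $\smaxplus$ is naturally ordered, the condition $(\vect{w} R)_j = \mpplus_i \vect{w}_i R_{ij} \sleq \vect{v}_j$ for all $j$ is equivalent to $\vect{w}_i R_{ij} \sleq \vect{v}_j$ for all $i, j$, which by residuation is equivalent to $\vect{w}_i \sleq \vect{v}_j / R_{ij}$ for all $i,j$, i.e.\ to $\vect{w}_i \sleq \min_j (\vect{v}_j / R_{ij}) = \vect{w}^{*}_i$ for all $i$. In particular, taking $\vect{w} = \vect{w}^{*}$, we get $\vect{w}^{*} R \sleq \vect{v}$.

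The proposition then follows at once. If $\vect{v} = \vect{w}^{*} R$, the inequality is the linear combination with coefficients $\vect{w}^{*}$. Conversely, if $\vect{v} = \vect{w} R$ for some $\vect{w}$, then in particular $\vect{w} R \sleq \vect{v}$, so $\vect{w} \sleq \vect{w}^{*}$ entrywise by the previous step; applying monotonicity of the product entrywise (again Property~\eqref{lemma:smaxplus:P4} of Lemma~\ref{lemma:smaxplus}) gives $\vect{v} = \vect{w} R \sleq \vect{w}^{*} R$, and together with $\vect{w}^{*} R \sleq \vect{v}$ this forces $\vect{v} = \vect{w}^{*} R$.

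I do not expect a genuine obstacle here: the conceptual content is merely that $\vect{w}^{*}$ is the maximal subsolution of $\vect{w} R \sleq \vect{v}$, so any exact solution is dominated by it and hence $\vect{w}^{*}$ is itself exact. The only care needed is bookkeeping — applying the residuation identity with the correct conventions when $R_{ij} = \mpzero$ or $\vect{v}_j = +\infty$, checking that scaling a mixed inequality by a coefficient in $\smaxplus$ (possibly a germ or $+\infty$) still yields a legitimate mixed inequality, and using that $\mpplus$ is the maximum so that an entrywise $\sleq$ decomposes over tropical sums.
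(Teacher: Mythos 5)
Your proposal is correct and follows exactly the route the paper itself indicates (the paper only sketches it): $\vect{w}^{*}$ is the greatest solution of $\vect{w}R \sleq \vect{v}$ by residuation and the fact that $\mpplus$ is the maximum, so an exact solution exists if and only if $\vect{w}^{*}$ is one. The additional translation step and the bookkeeping remarks about the conventions for $\mpzero$ and $+\infty$ are consistent with the paper's setup.
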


The principle of Proposition~\ref{prop:weak_implication} is that $\vect{w}^{*}$ can be shown to be the greatest solution of $\vect{w} R \sleq \vect{v}$. Thus, there exists a solution to $\vect{v} = \vect{w} R$ if, and only if, the equality is satisfied for $\vect{w} = \vect{w}^*$. It follows that the criterion of Proposition~\ref{prop:weak_implication} can be checked efficiently, in time $O(n \times p)$.

\subsection{Complexity of successive Fourier-Motzkin eliminations}\label{subsec:complexity}

As discussed in the beginning of Section~\ref{subsec:elimination_step}, propagating redundant inequalities may produce $O(p^{2^k})$ constraints after $k$ calls to Fourier-Motzkin elimination method (recall that $p$ refers to the number of constraints defining the initial polyhedron). We claim that, in the case of closed tropical polyhedra, the number of constraints remains simply exponential at every Fourier-Motzkin elimination when the weak redundancy criterion is used. To see this, let $\PP_0 = \PP$, $\PP_1$, \dots, $\PP_k$ be the closed tropical polyhedra arising during a sequence of $k$ calls to Fourier-Motzkin elimination followed by the weak inequality redundancy elimination of Proposition~\ref{prop:weak_implication}.

\begin{proposition}\label{prop:simple_exponential}
There exists a constant $K$ (independent from $k$, $n$, and $p$) such that for all $l \in [k]$, the number of inequalities describing $\PP_l$ obtained using the weak inequality redundancy elimination is bounded by 
\begin{equation}
K (n-l+1) p^{\floor{n/2} \floor{(n-l)/2}} \enspace . \label{eq:bound}
\end{equation}
\end{proposition}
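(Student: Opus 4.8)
The plan is to bound, once and for all, the size of \emph{any} system of non-strict tropical affine inequalities that describes $\PP_l$ and in which no inequality is a tropical linear combination of the others; by the very definition of the weak redundancy criterion of Proposition~\ref{prop:weak_implication}, the system produced at step $l$ is of this kind, once each of its inequalities has been put in disjoint-support form via Lemma~\ref{lemma:disjoint_support}. Such a bound does not see the $O(p^{2})$ blow-up of a single elimination, because it depends only on $\PP_l$ itself, and, as a subset of $\maxplus^{n-l}$, $\PP_l$ is independent of the prunings performed along the way: iterating Theorem~\ref{th:fourier_motzkin_maxplus_case} shows that $\PP_l$ equals the image $\pi_l(\PP)$ of $\PP=\PP_0$ under the coordinate projection $\pi_l\colon\maxplus^{n}\to\maxplus^{n-l}$ forgetting the last $l$ coordinates.

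First I would control the number of generators of $\PP_l$. Since $\PP$ is a closed tropical polyhedron defined by $p$ inequalities in $\maxplus^{n}$, the tropical Minkowski--Weyl theorem (\cite[Theorem~2]{GK09}) provides a finite family of points and rays generating it, and by the tropical analogue of the Upper Bound Theorem (see e.g.\ \cite{AllamigeonGaubertGoubaultDCG2013}) this family can be taken of cardinality at most a quantity $V$ of order $p^{\floor{n/2}}$. As $\pi_l$ is a $\maxplus$-linear map, it commutes with tropical convex and conic combinations, so the images of those generators generate $\PP_l=\pi_l(\PP)$; hence $\PP_l$ is a closed tropical polyhedron in $\maxplus^{n-l}$ with at most $V$ generators.

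Next I would bound the number of facets of $\PP_l$ and then the size of the pruned system. The tropical Upper Bound Theorem, in the dual form bounding the number of facets of a polyhedron by its number of generators, gives that $\PP_l$ has at most a quantity of order $(n-l+1)\,V^{\floor{(n-l)/2}}$ facets, that is, of order $(n-l+1)\,p^{\floor{n/2}\floor{(n-l)/2}}$. To conclude, let $\mathcal{S}$ be a disjoint-support describing system of $\PP_l$ in which no inequality is a tropical linear combination of the others. Passing to the homogenized cone $\widehat{\PP_l}\subset\maxplus^{n-l+1}$ and using the Farkas-type description of the valid inequalities of a closed tropical polyhedron (\cite{AllamigeonGaubertKatzLAA2011}), the cone of valid inequalities of $\PP_l$ in coefficient space is the tropical cone generated by $\mathcal{S}$ together with the $n-l+1$ ``diagonal'' inequalities $\vect{x}_j\sleq\vect{x}_j$; being finitely generated, this cone is topologically closed, and, applying the same description to a facet description of $\PP_l$, its extreme rays are exactly the facet-defining inequalities of $\PP_l$ and some of the diagonals. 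Thus every inequality of $\mathcal{S}$ is a tropical linear combination of these extreme rays; since $\mathcal{S}$ is in disjoint-support form, no diagonal inequality can contribute with a nonzero coefficient to such a combination, so each inequality of $\mathcal{S}$ is a combination of non-diagonal extreme rays alone, hence --- not being a combination of the other members of $\mathcal{S}$ --- one of those rays. Therefore $\mathcal{S}$ embeds, up to tropical scaling, into the set of facet-defining inequalities of $\PP_l$, and \eqref{eq:bound} follows by absorbing the universal constants from the two invocations of the Upper Bound Theorem into $K$.

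The crux is the last step: it relies on the closed-case tropical Farkas lemma and on identifying the extreme rays of the valid-inequality cone with the facets together with the diagonal inequalities, plus the observation that disjoint-support normalization rules the diagonals out of any non-trivial combination. The two Upper Bound Theorem estimates, and the fact that finitely generated tropical cones are closed, I would use as black boxes; that Fourier--Motzkin output can be brought to disjoint-support form by Lemma~\ref{lemma:disjoint_support} without changing the set $\PP_l$ is routine.
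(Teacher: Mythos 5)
Your overall architecture is the same as the paper's: bound the number of extreme generators of $\PP$ by the tropical analogue of McMullen's upper bound theorem, observe that these generators project onto a generating family of $\PP_l$, and then bound the dual (inequality) description of $\PP_l$ through the extreme rays of its polar cone. Up to that point the proposal is fine.

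The gap is in the step you yourself identify as the crux. You assert that ``the cone of valid inequalities of $\PP_l$ in coefficient space is the tropical cone generated by $\mathcal{S}$ together with the $n-l+1$ diagonal inequalities.'' This is precisely the naive tropical Farkas lemma, and it is false: the paper points this out explicitly just before Proposition~\ref{prop:weak_implication} (``the tropical analogue of Farkas' lemma given in~\cite{AllamigeonGaubertKatzLAA2011} shows that taking tropical linear combinations does not suffice to deduce all valid inequalities''). Consequently the polar cone of $\PP_l$ is in general strictly larger than $\tcone(\mathcal{S}\cup\{\text{diagonals}\})$, and its extreme rays need not lie in $\mathcal{S}$ at all. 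Your deduction ``each $s\in\mathcal{S}$ is a combination of extreme rays of the polar, hence, not being a combination of the other members of $\mathcal{S}$, is itself such a ray'' then collapses: $s$ could be a combination of extreme rays of the polar none of which belongs to $\mathcal{S}$, without contradicting the weak independence of $\mathcal{S}$. (Replacing the polar by the cone actually generated by $\mathcal{S}$ and the diagonals does not help either, since the upper bound on extreme rays you need is for the polar of a cone with $q$ generators, not for a cone generated by $\mathcal{S}$ itself --- that would be circular.) A secondary weakness is the appeal to ``facet-defining inequalities'': the paper warns that the notion of face of a tropical polyhedron is not well understood, and its proof avoids it entirely.

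The paper closes this step differently: it cites~\cite{AllamigeonGaubertKatzJCTA2011} for the statement that if $\QQ$ is generated by $q$ points and rays in dimension $n-l$, then the number of non-trivial extreme rays of its polar is at most $(n-l+1)(U(q+n-l+1,n-l)-n+l+2)$, \emph{and} that this same quantity bounds the size of any description of $\QQ$ by linearly independent inequalities. That second assertion is exactly the fact you are trying to prove, and it requires the analysis of polar cones carried out in that reference rather than a Farkas-type generation argument. To repair your proof you should either cite that result as the paper does, or supply a correct proof that a weakly independent external description embeds into the extreme rays of the polar --- which cannot go through the generation claim you use.
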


\begin{proof}
By the tropical analogue of McMullen's upper bound theorem~\cite{AllamigeonGaubertKatzJCTA2011}, we know that the number $q$ of extreme generators (points and rays) of $\PP$ is bounded by $U(p+n+1, n)$, where $U(p,n)$ is the number of facets of (classical) cyclic polytopes with $p$ extreme points in dimension $n$. In particular, $q$ is bounded by $p^{\floor{n/2}}$ when $n$ and $p$ are sufficiently large (see Appendix~\ref{sec:bound}).

It is straightforward to see that the extreme generators of every polyhedron $\PP_l$ ($l \in [k]$) arise as the projection of some of the extreme generators of $\PP$. In consequence, the number of extreme generators of each $\PP_l$ is bounded by $q$. 

Dually, the set of inequalities satisfied by all the points of a tropical polyhedron $\QQ$ forms a tropical polyhedral cone called the \emph{polar cone} of $\QQ$, see~\cite{AllamigeonGaubertKatzLAA2011}. As proved in~\cite{AllamigeonGaubertKatzJCTA2011}, if $\QQ$ is generated by $q$ points and rays in dimension $n-l$, the number of (non-trivial) extreme rays of its polar is bounded by $(n-l+1) (U(q+n-l+1,n-l)-n+l+2)$. In particular, this also bounds the size of any description of $\QQ$ by linearly independent inequalities. It can be shown that there exists a constant $K' > 0$ such that the latter quantity is bounded by $K' (n-l+1) q^{\floor{(n-l)/2}}$ for all values of $q$ and $n-l$ (see Appendix~\ref{sec:bound}). It follows that the representation by inequalities obtained after the application of the weak redundancy criterion is bounded by a quantity of the form~\eqref{eq:bound}.
\end{proof}

As a consequence, after the $(l+1)$-th call to Fourier-Moztkin elimination, the number of inequalities (defining $\PP_{l+1}$) is bounded by the square of~\eqref{eq:bound}, and the weak redundancy criterion can be applied in time $O((n-l)^3 p^{n (n-l)/2})$. 
It follows that the complexity of $k$ successive calls to tropical
Fourier-Motzkin elimination using the weak redundancy criterion can be
bounded by 
\[
O(k n^3 p^{n^2/2}) \enspace .
\]
Therefore, the time complexity is only exponential in the worst case. 

Since the criterion provided
by Theorem~\ref{th:equivalence_MPG} may eliminate further
inequalities, the number of inequalities describing $\PP_l$ obtained using this criterion instead of the weak one is also bounded by~\eqref{eq:bound}. However, this has to be balanced with the potentially
greater cost of the associated algorithm for solving mean payoff games.

\section{Tropical forward exploration for timed automata}
\label{sec:tropical_reachability_analysis} 

Timed automata is one of the formalisms used for modelling and
verification of real-time systems. As an application of the methods
developed in this paper, we show how the forward exploration algorithm
for timed automata~\cite{journal/tcs/AlurDd94} can be implemented using
tropical polyhedra with mixed constraints as symbolic states.  This
algorithm is used to solve the reachability problem, to which most
verification problems for timed automata can be reduced,
see~\cite{DBLP:journals/iandc/HenzingerNSY94,DBLP:journals/iandc/AlurCD93}.

We first recall some notions concerning timed automata, and illustrate
with an example the drawbacks (mentioned in the introduction) of using
zones or closed tropical polyhedra as symbolic states.  Note that zones
are also used as symbolic states in the verification of other real-time
models such as \eg~timed Petri
nets~\cite{conf/ajwsm/Bowden96,DBLP:conf/tacas/LimeRST09}, hence 
the corresponding algorithms can potentially benefit from our results too.

We consider a timed automaton over the set of
clocks $C=\{\vect{x}_1,\dots, \vect{x}_n\}$. It is represented by a directed graph whose nodes correspond to locations ($l_0, l_1, \dots$). We denote by $l_0$ the initial location and by $L$ the set of locations. 
An edge from location $l$ to $l'$ is denoted by $l \tto[
\phi, r] l'$, where $\phi$ represents a clock constraint and $r$ a set of reset
operations. More precisely, $\phi$ is a (possibly empty) conjunction of
atomic clock constraints of the form $\vect{x}_i \bowtie k$ and
$\vect{x}_i\bowtie k+ \vect{x}_j$, where $\vect{x}_i, \vect{x}_j\in C$,
$k \in \Int$, and $\mathord{\bowtie} \in \{\mathord{<}, \mathord{\leq},
\mathord{=},\mathord{\geq}, \mathord{>}\}$. Besides, $r$ is defined as a
partial function from $C$ to $\Nat$, meaning that $\vect{x}_i$ is mapped
to $k$ when the clock $\vect{x}_i$ is reset to the value $k$. The set of edges is denoted by $E$. 
Each location $l$ can be additionally labeled by a clock constraint
$\theta(l)$. Then, the states of the automaton are of the form $(l, v)$,
where $l \in L$ and $v : C \to \Realnn$ is such that $(v(\vect{x}_1),
\dots, v(\vect{x}_n))$ satisfies the constraint $\theta(l)$.  The evolution
of the system (\ie\ the \emph{semantics} of the automaton) is expressed
as a transition relation on these states, denoted by $\leadsto$. Transitions can be of two kinds:
\begin{itemize}[\textbullet]
\item[\emph{Delays}:]  where clock values increase synchronously at a
  given location. More precisely, $(l,v) \leadsto (l, v')$ if there exists $t \geq 0$ such that 
  $v'(\vect{x}_i) := v(\vect{x}_i) + t$ for all $i \in [n]$,
  and $(v(\vect{x}_1)+ t', \dots, v(\vect{x}_n)+ t')$ satisfies
  $\theta(l)$ for all $t' \in [0, t]$.
\item[\emph{Switches}:]  which are governed by the edges $l \tto[ \phi, r]
  l'$. In this case, we have $(l,v) \leadsto (l', v')$ if $(v(\vect{x}_1),
  \dots, v(\vect{x}_n))$ satisfies the constraints $\phi$, and
  $v'(\vect{x}_i) := r(\vect{x}_i)$ if $r$ is defined on $\vect{x}_i$,
  $v'(\vect{x}_i) := v(\vect{x}_i)$ otherwise.
\end{itemize}

The basic problem in the verification of timed automata is (untimed) reachability:
is a final location $l_f$ of the automaton reachable? More precisely, 
the reachability problem consists in determining whether there exist clock values $v_f: C\to \Realnn$ and a
finite path $(l_0,v_0)\leadsto^*( l_f, v_f)$ of transitions in the
automata, where $v_0$ is the function which maps every $\vect{x}_i$ to $0$. 

\begin{figure}[tp]
\centering
\begin{tikzpicture}[>=stealth',initial text=,every state/.style={minimum size=0.4cm},convex/.style={draw=lightgray,fill=lightgray,fill opacity=0.7},convexborder/.style={thick},scale=.9]
\coordinate (cl0) at (2,1.75);
\coordinate (cl1) at (0.5,0);
\coordinate (cl2) at (3.5,0);
\coordinate (cl3) at (2,-1.75);
\coordinate (cl4) at (2,-3.5);

\node[state,initial] (l0) at (cl0) {$l_0$};
\node[state] (l1) at (cl1) {$l_1$};
\node[state] (l2) at (cl2) {$l_2$};
\node[state] (l3) at (cl3) {$l_3$};
\node[state] (l4) at (cl4) {$l_f$};
\path[->] (l0) edge node [above left=-0.1cm] {$\vect{x}_1:= 0$} (l1);
\path[->] (l0) edge node [above right=-0.1cm] {$\vect{x}_2:= 0$} (l2);
\path[->] (l1) edge node [below left=-0.05cm] {$\vect{x}_2> 1$} (l3);
\path[->] (l2) edge node [below right=-0.05cm] {$\vect{x}_1> 1$} (l3);
\path[->] (l3) edge node [right] {$\vect{x}_1 \leq 1 \wedge \vect{x}_2 \leq 1$} (l4);

\begin{scope}[shift={($(cl0)+(7,0)$)}]
\node at (0,0) {$l_0:$};
\begin{scope}[scale=0.35,shift={(2,-1.2)}]
\draw[gray!40,very thin] (-0.5,-0.5) grid (2.5,2.5);
\draw[convexborder] (0,0) -- (2.5,2.5);
\end{scope}
\end{scope}

\begin{scope}[shift={($(cl1)+(7,0)$)}]
\node at (0,0) {$l_1:$};
\begin{scope}[scale=0.35,shift={(2,-1.2)}]
\draw[convexborder] (0,0) -- (2.5,2.5);
\draw[gray!40,very thin] (-0.5,-0.5) grid (2.5,2.5);
\filldraw[convex] (0,0) -- (0,2.5) -- (2.5,2.5) -- cycle;
\draw[convexborder] (2.5,2.5) -- (0,0) -- (0,2.5);
\end{scope}
\end{scope}

\begin{scope}[shift={($(cl2)+(7,0)$)}]
\node at (0,0) {$l_2:$};
\begin{scope}[scale=0.35,shift={(2,-1.2)}]
\draw[gray!40,very thin] (-0.5,-0.5) grid (2.5,2.5);
\filldraw[convex] (0,0) -- (2.5,2.5) -- (2.5,0) -- cycle;
\draw[convexborder] (2.5,0) -- (0,0) -- (2.5,2.5);
\end{scope}
\end{scope}

\begin{scope}[shift={($(cl3)+(7,0)$)}]
\node at (0,0) {$l_3:$};
\begin{scope}[scale=0.35]
\begin{scope}[shift={(2,-1.2)}]
\draw[gray!40,very thin] (-0.5,-0.5) grid (2.5,2.5);
\filldraw[convex] (0,2.5) -- (0,1) -- (1,1) -- (1,0) -- (2.5,0) -- (2.5,2.5) -- cycle;
\draw[convexborder] (0,2.5) -- (0,1) (1,0) -- (2.5,0);
\end{scope}
\node at (6,-0.2) {$=$};
\begin{scope}[shift={(8,-1.2)}]
\draw[gray!40,very thin] (-0.5,-0.5) grid (2.5,2.5);
\filldraw[convex] (0,2.5) -- (0,1) -- (1,1) -- (2.5,2.5) -- cycle;
\draw[convexborder] (0,2.5) -- (0,1) (1,1) -- (2.5,2.5);
\end{scope}
\node at (12,-0.2) {$\cup$};
\begin{scope}[shift={(14,-1.2)}]
\draw[gray!40,very thin] (-0.5,-0.5) grid (2.5,2.5);
\filldraw[convex] (1,1) -- (1,0) -- (2.5,0) -- (2.5,2.5) -- cycle;
\draw[convexborder] (1,1) -- (2.5,2.5) (1,0) -- (2.5,0);
\end{scope}
\end{scope}
\end{scope}

\begin{scope}[shift={($(cl4)+(7,0)$)}]
\node at (0,0) {$l_f:$};
\begin{scope}[scale=0.35,shift={(2,-1)}]
\draw[gray!40,very thin] (-0.5,-0.5) grid (2.5,2.5);
\end{scope}
\end{scope}
\end{tikzpicture}
\caption{Left: Timed automaton. Right: Symbolic states accumulated during forward exploration, shown after delays (black borders are included in the depicted gray regions).\label{fi:ta-disj}\label{fig:ta-disj}}
\end{figure}
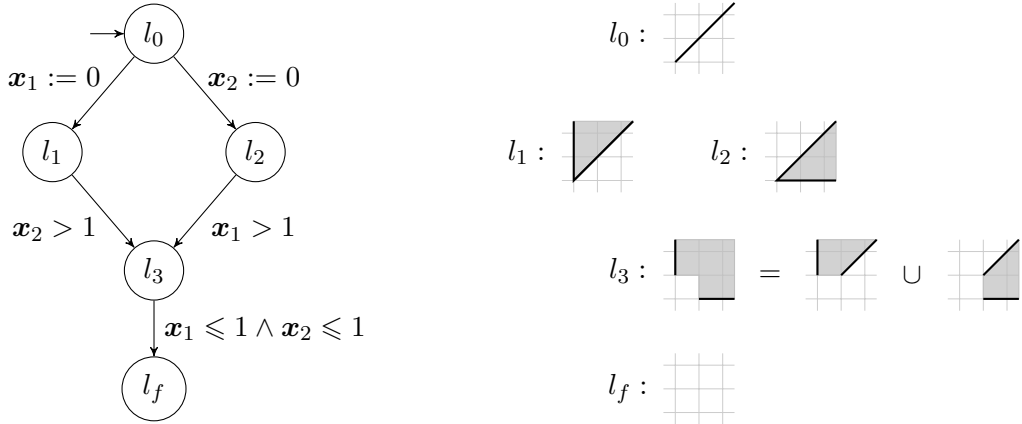

\begin{example}
Consider the timed-automaton fragment depicted in Figure~\ref{fi:ta-disj}, which involves two clocks $\vect{x}_1$ and $\vect{x}_2$.
Its edges are labeled by the reset operations (for instance, $\vect{x}_1 := 0$) and/or the constraints
on clocks (for instance, $\vect{x}_2 > 1$). As mentioned above, the initial location is $l_0$, and the two clocks are
initialized to $0$. 

The diagrams on the right-hand side of Figure~\ref{fi:ta-disj} depict the
symbolic states of this automaton, \ie\ the sets of states (clock values) which can
arise at each location. For instance, at location $l_0$ we recover the initial state $\vect{x}_1 =
\vect{x}_2 = 0$, and all the other states $\vect{x}_1 = \vect{x}_2 = t
> 0$ arise as time goes by while staying at the same
location. Similarly, the states at location $l_1$ are obtained from the
ones at location $l_0$ by resetting clock $\vect{x}_1$, \ie\ setting
$\vect{x}_1 = 0$, while $\vect{x}_2 \geq 0$ is not affected. Then, as
time goes by, we get all the states satisfying $0 \leq \vect{x}_1 
\leq \vect{x}_2$. Note that the final location $l_f$ is not reachable in this example.

The symbolic states at locations $l_0$, $l_1$ and $l_2$ can be
represented exactly by zones, but the one at location $l_3$
cannot. Hence, the symbolic state at $l_3$ has to be
split, potentially doubling the number of symbolic states to be
visited after. If several such timed-automaton fragments are
concatenated, it is easy to see that this splitting of symbolic states
may lead to a situation where an exponential number of zones have to be
used to determine that the final location is not
reachable. Alternatively, the symbolic state at location $l_3$ could be over-approximated
by a single zone
\tikz[baseline=3pt,scale=0.18,convex/.style={draw=lightgray,fill=lightgray,fill
  opacity=0.7},convexborder/.style={thick}]{ \draw[gray!40,very thin]
  (-0.5,-0.5) grid (2.5,2.5); \filldraw[convex] (0,2.5) -- (0,0) --
  (2.5,0) -- (2.5,2.5) -- cycle; \draw[convexborder] (0,2.5) -- (0,0) --
  (2.5,0); }, but in this case we cannot certify anymore that the final location $l_f$ is not reachable. 
On the other hand, the use of closed tropical polyhedra to
over-approximate the union of the sets of states arising from $l_1$ and $l_2$
provides the closure  
\tikz[baseline=3pt,scale=0.18,convex/.style={draw=lightgray,fill=lightgray,fill opacity=0.7},convexborder/.style={thick}]{%
\draw[gray!40,very thin] (-0.5,-0.5) grid (2.5,2.5);
\filldraw[convex] (0,2.5) -- (0,1) -- (1,1) -- (1,0) -- (2.5,0) --
(2.5,2.5) -- cycle; \draw[convexborder] (0,2.5) -- (0,1) -- (1,1) --
(1,0) -- (2.5,0); } of this union, which contains the
point $(1,1)$. Then, the final location $l_f$ becomes reachable, while it
should not if strict constraints were correctly handled. 
\end{example}

The reachability problem can be solved using a symbolic forward
exploration algorithm, first given
in~\cite{DBLP:journals/iandc/HenzingerNSY94}, which is still used in
state-of-the-art tools.  This algorithm, shown in
Figure~\ref{al:freach}, is usually implemented using zones (or DBMs).
The algorithm explores sets of reachable states, representing them
as symbolic states using zones, and performing symbolic delay and switch
operations on them. We do not discuss this algorithm further, but we
point out that any class of symbolic states can be used, provided that
it supports the operations \texttt{is\_empty}, \texttt{is\_included},
\texttt{intersect}, \texttt{reset}, and \texttt{delay} used in the
above algorithm. We now detail
the definition of these operations, and show how to implement them over
tropical polyhedra with mixed constraints.

\begin{figure}[tbp]
  \begin{algorithmic}[1]
    \REQUIRE timed automaton $( L, l_0, C, \theta, E)$, $l_f\in L$
    \ENSURE \TRUE\ if $\exists v_f: C\to \Realnn:( l_0,
    v_0)\leadsto^*( l_f, v_f)$, \FALSE\ otherwise
    \STATE $\textit{Waiting}:= \{( l_0, \texttt{intersect}_{ \theta( l_0)}(
    \texttt{delay}( \{ v_0\})))\}$; $\textit{Passed}:= \emptyset$
    \WHILE{$\textit{Waiting} \neq \emptyset$}
    \STATE Choose and remove $(l,V)$ from \textit{Waiting}
    \IF{$ l = l_f$}
    \RETURN \TRUE
    \ENDIF
    \IF{(\NOT $\texttt{is\_included}(V, V')$) for all $(l,V') \in
      \textit{Passed}$}  \label{al:freach.include} 
    \STATE $\textit{Passed}:= \textit{Passed} \cup \{(l,V)\}$
    \label{al:freach.addpassed}
    \FORALL{$l\tto[ \phi, r] l'$}
    \STATE \label{al:freach.explore} $V':= \texttt{intersect}_{ \theta( l')}(
    \texttt{delay}( \texttt{reset}_r( \texttt{intersect}_\phi( V))$
    \IF{\NOT $\texttt{is\_empty}( V')$}
    \STATE $\textit{Waiting}:= \textit{Waiting}\cup\{( l', V')\}$
    \label{al:freach.addwaiting}
    \ENDIF
    \ENDFOR
    \ENDIF
    \ENDWHILE
    \RETURN \FALSE
  \end{algorithmic}
  \caption{%
    \label{al:freach}
    The symbolic forward reachability algorithm for timed automata.}
\end{figure}

The operation $\mathtt{is\_empty}(\PP)$ determines whether the
polyhedron $\PP$ is empty. It is implemented using
the methods described in Section~\ref{Subsection:EquivMPG}, \ie \ through a reduction to mean payoff
games. The operation $\mathtt{is\_included}(\PP_1,\PP_2)$ checks if
$\PP_1 \subset \PP_2$. This can be performed using the procedure for
deciding implications of Section~\ref{Subsection:EquivMPG}, by
determining whether the system of inequalities defining $\PP_1$ implies
each defining inequality of the polyhedron $\PP_2$. The operation
$\mathtt{intersect}_\psi(\PP)$ computes the intersection of $\PP$ with
the constraints in $\psi$. It is simply defined by appending the
inequalities in $\psi$ to the system defining $\PP$, where any strict
constraint in $\psi$ is encoded as an inequality over $\smaxplus$ using
elements of the form $\myul{\lambda}$.
The operation $\mathtt{reset}_{\vect{x}_i := k}(\PP)$ consists in computing the polyhedron 
\[
\{ \vect{y} \in \maxplus^n \mid \vect{x} \in \PP, \ \vect{y}_j = \vect{x}_j \ \text{if}\ j \neq i, \ \vect{y}_i = k\} \ .
\]
It can be obtained by eliminating $\vect{x}_i$ in the system of constraints defining $\PP$ using Fourier-Motzkin elimination, and then intersecting the resulting
polyhedron with the constraint $\vect{x}_i= k$ (encoded as two inequalities $\vect{x}_i \sleq k$ and $k \sleq \vect{x}_i$).
Finally, the operation $\mathtt{delay}(\PP)$ consists in converting the polyhedron $\PP$
into the set $\{ \lambda \vect{x} \mid \vect{x} \in \PP, \lambda \geq 0
\}$ (recall that $\lambda \vect{x}$ corresponds to the vector with
entries $\lambda + \vect{x}_i$).
Assuming that $\PP$ is given by the system $A \vect{x} \mpplus
\vect{c} \sleq B\vect{x}\mpplus \vect{d}$, we first let $\QQ$ be the
polyhedron defined by $A \vect{x} \mpplus \lambda \vect{c} \sleq B
\vect{x} \mpplus \lambda \vect{d}$ and $0 \sleq \lambda$, and then apply Fourier-Motzkin
elimination on $\lambda$ to get $\mathtt{delay}(\PP)$.  To prove this
algorithm is correct, observe that:
\begin{align*}
\mathsf{delay}(\PP) & = \{ \lambda \vect{x} \in \maxplus^n \mid 0 \sleq \lambda , \ A
\vect{x} \mpplus \vect{c} \sleq B \vect{x} \mpplus \vect{d} \} \\
& = \{ \vect{x} \in \maxplus^n \mid 0 \sleq \lambda , \ A (\lambda^{-1}\vect{x}) \mpplus \vect{c} \sleq B (\lambda^{-1}\vect{x}) \mpplus \vect{d} \} \\
& = \{ \vect{x} \in \maxplus^n \mid 0 \sleq \lambda , \ A \vect{x} \mpplus \lambda \vect{c} \sleq B\vect{x}\mpplus
\lambda \vect{d} \} 
\enspace.
\end{align*}

To combat state space explosion, symbolic states are also equipped with
an \emph{over-approximating union} operator. In this way, symbolic states which
are reached through different paths
may be recombined, leading to a significant reduction in the number of
symbolic states the forward exploration algorithm has to consider. 
Given two polyhedra with mixed constraints $\PP, \PP'\subset \maxplus^n$, 
the over-approximation union operator $\mathtt{over\_approx}(\PP,\PP')$ 
is defined as the tropical convex hull
of $\PP \cup \PP'$. Note that given systems of mixed inequalities describing $\PP$ and  $\PP'$, a system describing $\mathtt{over\_approx}(\PP,\PP')$ can be computed by means of Proposition~\ref{prop:over_approx}.
As this involves $2n+2$ calls to tropical Fourier-Motzkin elimination, it is crucial to implement some redundancy elimination. 

Observe that the error introduced using the operation $\mathtt{over\_approx}$ over polyhedra with mixed constraints is smaller than using zone-based over-approximation. Indeed, zones are tropically convex, and thus any zone containing $\PP \cup \PP'$ also contains the tropical convex hull of $\PP \cup \PP'$. In the example of Figure~\ref{fig:ta-disj}, the
over-approximating union by polyhedra with mixed constraints of the two sets of states arising at $l_3$ is exact, and given by the polyhedron defined by $\vect{x}_1, \vect{x}_2 \sgeq 0$ and $1 \sleq \myul{0} \vect{x}_1 \mpplus \myul{0}
\vect{x}_2$, or equivalently, $1 < \max(\vect{x}_1, \vect{x}_2)$.

We have implemented a prototype of the forward exploration algorithm
based on tropical polyhedra with mixed constraints. The algorithms of
Section~\ref{sec:fourier_motzkin} and the operations described above
have been implemented within the OCaml library~\tplib{}~\cite{TPLib},
whose purpose is to provide algorithms for tropical polyhedra.
It relies on the library~\mpglib{}~\cite{MPGLib}, which implements the
algorithm in~\cite{DhingraGaubertVALUETOOLS2006} for solving mean payoff
games by policy iteration.
Our prototype successfully checks that location $l_f$ is not
reachable in the timed automaton of Figure~\ref{fi:ta-disj}. In future works, we plan to apply our method on more representative examples taken from real case studies, and to compare it in terms of performance/precision with state-of-the-art tools such as \textsc{Uppaal}~\cite{DBLP:conf/qest/BehrmannDLHPYH06}.

\section*{Acknowledgments}

The authors warmly thank Alexandre David for helpful discussions on the algorithms involved in the verification of timed automata. The authors are also grateful to the anonymous referee for his comments and suggestions which helped to improve the paper.

\bibliographystyle{alpha}      
\newcommand{\etalchar}[1]{$^{#1}$}

\appendix
\section{Additional details for the proof of Proposition~\ref{prop:simple_exponential}}\label{sec:bound}

We first recall that:
\[
U(p,n) =
\begin{cases}
\binomial{p-\floor{n/2}}{\floor{n/2}}+ 
\binomial{p-\floor{n/2}-1}{\floor{n/2}-1} 
& \text{for} \ n \ \text{even,} \\
2\binomial{p-\floor{n/2}-1}{\floor{n/2}}  
& \text{for} \ n \ \text{odd.}
\end{cases}
\]
Thus, if $n$ is even,
\[
U(p+n+1,n) = \binom{p+1+\floor{n/2}}{\floor{n/2}} + \binom{p+\floor{n/2}}{\floor{n/2}-1} \; ,
\]
and if $n$ is odd,
\[
U(p+n+1,n) = 2\binom{p+1+\floor{n/2}}{\floor{n/2}} \ .
\]
In both cases, it can be easily checked that
\[
U(p+n+1,n) \leq 2 \binom{p+1+\floor{n/2}}{\floor{n/2}} \ .
\]
We claim that for $p$ and $n$ sufficiently large, 
\begin{equation}
U(p+n+1, n) \leq p^{\floor{n/2}} \ . \label{eq:bound1}
\end{equation}
Let $m := \floor{n/2}$. By Stirling approximation formulas, we know that for all positive integer $h$, 
\[
\sqrt{2\pi h} (h/e)^h \leq h! \leq e \sqrt{h} (h/e)^h 
\]
As a result,
\begin{align*}
\binom{p+1+m}{m} & \leq \frac{e}{2\pi} \sqrt{\frac{p+1+m}{(p+1) m}} \Bigl(1+\frac{p+1}{m}\Bigr)^m \Bigl(1+\frac{m}{p+1}\Bigr)^{p+1} \\
& \leq \frac{1}{2} \Bigl(1+\frac{p+1}{m}\Bigr)^m \Bigl(1+\frac{m}{p+1}\Bigr)^{p+1}
\end{align*}
when $p \geq 1$ and $m \geq 2$. We next show that $\bigl(1+\frac{p+1}{m}\bigr)^m \bigl(1+\frac{m}{p+1}\bigr)^{p+1}$ is bounded by $p^m$ by considering their logarithm:
\[
m \ln \Bigl(1+\frac{p+1}{m}\Bigr) + (p+1) \ln \Bigl(1+\frac{m}{p+1}\Bigr) \leq m \ln \Bigl(1+\frac{p+1}{m}\Bigr) + m = m \ln \Bigl(\frac{e(p+1+m)}{m} \Bigr) \leq m \ln p
\]
as soon as $e (p+m+1) \leq p m$. The latter condition is satisfied if $p \geq 6$ and $m \geq 6$. This shows that~\eqref{eq:bound1} holds when $p \geq 6$ and $n \geq 12$.

Using the same arguments, we obtain that:
\[
U(q+n-l+1, n-l) \leq q^{\floor{(n-l)/2}}
\]
as soon as $q \geq 6$ and $n-l \geq 12$. In any case, we can find a constant $K' > 0$ such that for all values of $q$ and $n-l$, 
\[
U(q+n-l+1, n-l) \leq K' q^{\floor{(n-l)/2}}
\]
Assuming that $q \leq U(p+n+1, n)$, it follows that for a certain constant $K'' > 0$,
\[
(n-l+1) (U(q+n-l+1,n-l)-n+l+2) \leq K'' (n-l+1) p^{\floor{n/2} \floor{(n-l)/2}}
\]
if $p \geq 6$ and $n \geq 12$. This allows to show that there exists $K > 0$ such that for all $p$, $n$, and $l \in [n-1]$,
\[
(n-l+1) (U(q+n-l+1,n-l)-n+l+2) \leq K (n-l+1) p^{\floor{n/2} \floor{(n-l)/2}} \; .
\]

\end{document}